\newcommand{\Z}{{\mathbb Z}}
\newcommand{\F}{{\mathbb F}}
\newcommand{\N}{{\mathbb N}}
\newcommand{\C}{{\mathbb C}}
\newcommand{\G}{{\mathbb G}}
\newcommand{\p}{{\mathfrak p}}
\newcommand{\m}{{\mathfrak m}}
\newcommand{\X}{{\mathfrak X}}
\newcommand{\Var}{\mathcal V}
\newcommand{\gammasep}{{\gamma}_{\operatorname{sep}}}
\newcommand{\Varsep}{{\mathcal V}_{\operatorname{sep}}}
\newcommand{\Isep}{{\mathcal I}_{\operatorname{sep}}}
\newcommand{\upto}{,\ldots ,}
\newcommand{\mmax}{\mathfrak{m}}
\newcommand{\nmax}{\mathfrak{n}}
\newcommand{\pprime}{\mathfrak{p}}
\newcommand{\qprime}{\mathfrak{q}}
\DeclareMathOperator{\codim}{codim}
\DeclareMathOperator{\GL}{GL}
\DeclareMathOperator{\id}{id}
\DeclareMathOperator{\cmdef}{cmd}
\DeclareMathOperator{\cidef}{cid}
\DeclareMathOperator{\cid}{cid}
\DeclareMathOperator{\Spec}{Spec}
\DeclareMathOperator{\Quot}{Quot}
\DeclareMathOperator{\depth}{depth}
\DeclareMathOperator{\height}{ht}
\DeclareMathOperator{\trdeg}{trdeg}
\DeclareMathOperator{\characteristic}{char}
\newtheorem{thm}{Theorem}[section]
\newtheorem{prop}[thm]{Proposition}
\newtheorem{lem}[thm]{Lemma}
\newtheorem{cor}[thm]{Corollary}
\newtheorem{conj}[thm]{Conjecture}
\theoremstyle{definition}
\newtheorem{definition}[thm]{Definition}
\newtheorem{example}[thm]{Example}
\theoremstyle{remark}
\newtheorem{remark}[thm]{Remark}
\theoremstyle{theorem}
\numberwithin{equation}{section}
\begin{document}

\title{Separating Invariants of Finite Groups}

\author{Fabian Reimers}

\address{Technische Universi\"at M\"unchen, Zentrum Mathematik - M11, 
Boltzmannstr.~3, 85748 Garching, Germany}

\email{reimers@ma.tum.de}

\date{March 14, 2017}

\subjclass[2000]{13A50}

\keywords{Invariant theory, separating invariants, reflection groups, multiplicative invariant theory}

\begin{abstract}
This paper studies separating invariants of finite groups acting on affine varieties through automorphisms. 
Several results, proved by Serre, Dufresne, Kac-Watanabe and Gordeev, and Jeffries and Dufresne exist that relate properties of the invariant ring or a separating subalgebra to properties of the group action. All these results are limited to the case of linear actions on vector spaces. The goal of this paper is to lift this restriction by extending these results to the case of (possibly) non-linear actions on affine varieties.

Under mild assumptions on the variety and the group action, we prove that polynomial separating algebras can exist only for reflection groups. 
The benefit of this gain in generality is demonstrated by an application to the semigroup problem in multiplicative invariant theory.

Then we show that separating algebras which are complete intersections in a certain codimension can exist only for 2-reflection groups. Finally we prove that a separating set of size $n + k - 1$ (where $n$ is the dimension of $X$) can exist only for $k$-reflection groups.

Several examples show that most of the assumptions on the group action and the variety that we make cannot be dropped.
\end{abstract}

\maketitle

%%%%%%%%%%%%%%%%%%%%%%%%%%%%%%%%%%%%%%%%%%%%%%%%%%%%%%%%%%%%%%%%%%%%%%
\section*{Introduction}
%%%%%%%%%%%%%%%%%%%%%%%%%%%%%%%%%%%%%%%%%%%%%%%%%%%%%%%%%%%%%%%%%%%%%%

Invariant theory studies the ring of those regular functions on an affine variety $X$ that are constant on the orbits of an action of a linear algebraic group $G$ on $X$, where the action is given by a morphism $G \times X \to X$. This paper considers the case where $G$ is a finite group. We will always assume that the ground field $K$, over which $G$ and $X$ are defined, is algebraically closed, and refer to this setting by calling $X$ a $G$-variety. 

The group action on $X$ induces an action on the coordinate ring $K[X]$ via $\sigma \cdot f := f \circ \sigma^{-1}$ for $\sigma \in G$ and $f\in K[X]$. The elements fixed by this action 
are called invariants and they form a subalgebra of $K[X]$: 
\[ K[X]^G := \{ f \in K[X] \mid \sigma \cdot f = f \text{ for all }\sigma \in G\},\]
which is called the invariant ring.
Although $K[X]^G$ is finitely generated as a $K$-algebra by a classical theorem of Noether \cite{noether1926endlichkeitssatz}, its minimal number of generators can be very large even for small groups and low-dimensional representations $X = V$ of $G$ (see e.g. \cite[Table in Section 5] {kamke2012algorithmic}). Derksen and Kemper \cite{derksen2002computational} introduced the notion of separating invariants as a weaker concept than generating invariants. A subset $S \subseteq K[X]^G$ of the invariant ring is called \emph{separating} if for all $x,\, y \in X$ the following holds: If there exists an invariant $f \in K[X]^G$ with $f(x) \neq f(y)$, then there exists an element $g \in S$ with $g(x) \neq g(y)$. Let $\gammasep$ denote the smallest integer $m$ such that there exists a separating subset of size $m$. It has been known for a while that $\gammasep$ is bounded above by $2n+1$ where $n$ is the transcendence degree of the invariant ring (see \cite{dufresne2008thesis} and \cite{kamke2012algorithmic}). In the case of finite groups $n$ is just the dimension of the variety $X$.

Therefore, it can make sense to shift the focus on separating sets of invariants rather than possibly much larger and more complicated sets of generating invariants (see \cite{kemper2009separating} for other aspects in which separating invariants are better behaved than generating invariants). 

This naturally leads to the question if separating sets of the smallest possible size and separating algebras with ''good algebraic properties'' exist for a given group action. We formulate the following two questions explicitly:
\begin{enumerate}
\item[(Q1)] When does there exist a separating algebra $A \subseteq K[X]^G$ that is isomorphic to a polynomial ring (which is equivalent to $\gammasep = n$)?
\item[(Q2)] When does there exist a separating algebra $A \subseteq K[X]^G$ that is a complete intersection (which includes the case $\gammasep = n+1$)? 
\end{enumerate}

In the case of a non-modular linear representation $X = V$ of $G$ the theorem of Shephard and Todd \cite{shephard1954finite}, Chevalley \cite{chevalley1955invariants}, and Serre \cite{serre1968groupes} gives a complete answer for $A = K[V]^G$ to the first question: The invariang ring $K[V]^G$ is a polynomial ring if and only if $G$ is generated by 1-reflections (i.e., by elements that act as identity on a subspace of codimension 1 of $V$). Serre also proved that even in the modular case the invariant ring $K[V]^G$ can only be isomorphic to a polynomial ring if $G$ is a reflection group. Dufresne \cite{dufresne2009separating} generalized this to separating invariants and showed that the theorem of Serre remains true if we replace the invariant ring $K[V]^G$ by any separating subalgebra $A$ of $K[V]^G$. 

The first main result of this article extends this to the more general situation of possibly non-linear actions on varieties that need not be affine spaces. 

\newtheorem*{thm:section2main}{Theorem \ref{TheoremYsep=nImpliesGeneratedByReflections}}
\begin{thm:section2main}
Assume that the $G$-variety $X$ is connected and Cohen-Macaulay and that $G$ is generated by elements having a fixed point. If $\gammasep = n$, then $G$ is generated by reflections.
\end{thm:section2main}

Notice that for a linear action on a vector space $X = V$, every group element has the origin as fixed point. 

A case of non-linear actions to which Theorem~\ref{TheoremYsep=nImpliesGeneratedByReflections} is applicable is given by multiplicative invariant theory. This is a branch of invariant theory which deals with the action of a finite group $G$ on a lattice $L \cong \Z^n$ and the induced action on the group ring $K[L]$. It is studied in detail in Lorenz's book \cite{lorenz2005multiplicative}. Since $K[L]$ is a Laurent polynomial ring in $n$ indeterminates, it can be interpreted as the coordinate ring of an $n$-dimensional algebraic torus $X = \G_m^n$. 

An analogous result in multiplicative invariant theory to the Shephard-Todd-Chevalley-Serre theorem is \cite[Theorem 7.1.1]{lorenz2005multiplicative} which says that under the assumption that $\characteristic(K)$ does not divide $|G|$ the following two statements (among others) are equivalent:
\begin{enumerate}
\item[(a)] $K[L]^G$ is (isomorphic to) a mixed Laurent polynomial ring, i.e., there exists an integer $k \in \{ 0 \upto n \}$ with $K[L]^G \cong K[x_1^{\pm 1} \upto x_k^{\pm 1}, \, x_{k+1}\upto x_n]$,
\item[(b)] $G$ is generated by reflections on $L$ and $K[L]^G$ is a unique factorization domain.
\end{enumerate}

Without any assumptions about the characteristic of $K$, Lorenz \cite{lorenz2001semigroup} proved the following: If $G$ is generated by reflections, then there is a submonoid $M \subseteq K[L]^G$ such that $K[L]^G$ is isormorphic to the semigroup algebra $K[M]$. The question whether the converse of this statement holds is called the ''semigroup problem in multiplicative invariant theory'' (see \cite[Section 1.5]{tesemma2004phd}). Some partial converses are given in \cite{tesemma2004phd} and \cite[Section 10.2]{lorenz2005multiplicative}. We add another one to the list by proving that of the above statements (a) implies (b) independently of the characteristic of $K$.

\newtheorem*{thm:section2mit}{Theorem \ref{TheoremMixedLaurentPolynomialRingInMIT}}
\begin{thm:section2mit}
Let $L$ be a lattice and let $G$ be a finite group acting on $L$ by automorphisms. If $K[L]^G$ is isomorphic to a mixed Laurent polynomial ring, then $G$ is generated by reflections.
\end{thm:section2mit}

\vspace{0.2cm}

For question (Q2) a necessary condition similar to Serre's theorem was found by Kac and Watanabe \cite{kac1982finite} and independently by Gordeev \cite{gordeev1982invariants}. They showed that the invariant ring $K[V]^G$ of a linear representation $V$ of $G$ can only be a complete intersection if $G$ is generated by 2-reflections (i.e., by elements that act as identity on a subspace of codimension 2 of $V$). This was extended by Dufresne \cite{dufresne2009separating} to graded separating subalgebras of $K[V]^G$, and it is now further extended in this article to non-linear actions on varieties, and to separating subalgebras that are not complete intersections globally but satisfy some weaker local property.

\newtheorem*{thm:section3main}{Theorem \ref{TheoremMainTheoremOnCompleteIntersectionAndBireflection}}
\begin{thm:section3main}
Assume that the $G$-variety $X$ is normal and connected and that $X^G \neq \emptyset$. If there exists a finitely generated, separating algebra $A \subseteq K[X]^G$, such that $K[X]^G$ is a finite $A$-module, and that $A$ is a complete intersection in codimension $2 + \cid(A)$ (where $\cid(A)$ is the complete intersection defect of $A$), then $G$ is generated by $2$-reflections. 
\end{thm:section3main}

In particular, the conclusion of the theorem holds if $A$ is a complete intersection. The generalization using the complete intersection defect leads to the following corollary for non-modular (at least 3-dimensional) non-trivial representations $X = V$ of $G$: If $\cid(K[V]^G) \leq n-3$, then $G \setminus \{ \id\}$ must contain an $(n-1)$-reflection (see Theorem~\ref{TheoremUsingIsolatedSingularityForAWeakCorollary}).

\vspace{0.2cm}

Recently, Dufresne and Jeffries \cite{dufresne2013separating} found a remarkable connection, in the case of linear actions on $n$-dimensional affine spaces, between the size of a separating set of invariants and the property of being a $k$-reflection group. They proved that if $\gammasep = n + k - 1$, then $G$ is generated by $k$-reflections. Again we generalize this to affine $G$-varieties, which leads to the following result.

\newtheorem*{thm:section4main}{Theorem \ref{TheoremYsep=n+k-1=>k-reflections}}
\begin{thm:section4main}
Assume that the $G$-variety $X$ is normal and connected and that $G$ is generated by elements having a fixed point in $X$. If $\gammasep = n + k - 1$ (with $k \in \N$), then $G$ is generated by $k$-reflections.
\end{thm:section4main}

We remark that the assumptions on $X$ and $G$ in Theorem~\ref{TheoremMainTheoremOnCompleteIntersectionAndBireflection} and Theorem~\ref{TheoremYsep=n+k-1=>k-reflections} are satisfied both for a linear representation $X = V$ and for the case of multiplicative invariants where $X = \G_m^n$.

\vspace{0.2cm}

This article is organized as follows:

In Section \ref{SectionVarSepAndConnectedness} we study connectedness properties of the so-called separating variety $\Varsep$, leading to a characterization of the connectedness in a certain codimension of $\Varsep$ in relation to group elements which act as reflections (see Theorem~\ref{TheoremVsepConnectedInCodimIfAndOnlyIfGgenerated}).

Section \ref{SectionPolySepAlgebras} contains our main results concerning question (Q1). Using Hartshorne's connectedness theorem and the results of Section \ref{SectionVarSepAndConnectedness}, we can prove Theorem~\ref{TheoremYsep=nImpliesGeneratedByReflections}, which was mentioned above. Several examples show that the assumptions on $X$ and $G$ in Theorem~\ref{TheoremYsep=nImpliesGeneratedByReflections} cannot be dropped. Then we apply our results to the semigroup problem in multiplicative invariant theory and prove Theorem~\ref{TheoremMixedLaurentPolynomialRingInMIT}.

Section \ref{SectionCISepAlgebras} contains our main results concerning question (Q2). Here much deeper results from algebraic geometry about simply connected quotients and a purity theorem by Cutkosky are needed.

In Section \ref{SectionMinNumber} we prove Theorem~\ref{TheoremYsep=n+k-1=>k-reflections}. By using Theorem~\ref{TheoremVsepConnectedInCodimIfAndOnlyIfGgenerated} again, this time in combination with Grothendieck's connectedness theorem, we follow a similar path as in Section \ref{SectionPolySepAlgebras}.

\vspace{0.2cm}
\textbf{Acknowledgement.}
\thanks This paper contains the main results of my dissertation \cite{reimers2016dissertation}. I would like to express my sincere gratitude to my thesis advisor G.~Kemper, not only for introducing me to the subject, but also for his constant support and encouragement.

%%%%%%%%%%%%%%%%%%%%%%%%%%%%%%%%%%%%%%%%%%%%%%%%%%%%%%%%%%%%%%%%%%%%%%
\section{The Separating Variety and Reflections}\label{SectionVarSepAndConnectedness}
%%%%%%%%%%%%%%%%%%%%%%%%%%%%%%%%%%%%%%%%%%%%%%%%%%%%%%%%%%%%%%%%%%%%%%

Let $X$ be an $n$-dimensional $G$-variety as in the introduction.
The \emph{separating variety} of the action of $G$ on $X$ is defined to be the following subvariety of $X \times X$:
\[ \Varsep := \{ (x,y) \in X \times X \mid f(x) = f(y)  \text{ for all } f \in K[X]^G \}.\]
Since $G$ is finite, the invariants separate the orbits (see \cite[Section 2.3]{derksen2002computational}).
Thus two points $x,\, y \in X$ lie in the same orbit if and only if $(x,\,y) \in \Varsep$. So the separating variety really is the graph of the action of $G$ on $X$:
\[ \Varsep = \{ (x,\,\sigma x) \mid x \in X, \, \sigma \in G \}. \]
Hence 
$\Varsep$ is the union of all $H_{\sigma} := \{ (x,\, \sigma x) \mid x \in X\}$. Each $H_{\sigma}$ is an affine variety isomorphic to $X$, which leads to the following decomposition of $\Varsep$ into irreducible components.

\begin{prop}\label{PropositionIrreducibleComponentsOfVsep}
Assume that $X = \bigcup_{i=1}^r X_i$ is decomposed into its irreducible components $X_i$. Then for all $i$ and for all $\sigma \in G$ the subspace
\[ H_{\sigma,i} := \{ (x,\, \sigma x) \mid x \in X_i\} \subseteq X \times X\]
is an irreducible component of $\Varsep$, and $\Varsep$ is the union of all $H_{\sigma,i}$. 
\end{prop}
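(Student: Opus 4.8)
The plan is to write $\Varsep$ explicitly as a finite union of irreducible closed subsets and then identify its irreducible components with the maximal members of that union.

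First I would recall from the discussion preceding the proposition that $\Varsep = \bigcup_{\sigma \in G} H_\sigma$ with $H_\sigma = \{(x,\sigma x) \mid x \in X\}$, and that for each $\sigma$ the morphism $X \to X \times X$, $x \mapsto (x,\sigma x)$, is an isomorphism onto $H_\sigma$, with inverse the first projection. Restricting it to the irreducible component $X_i$ of $X$ gives an isomorphism $X_i \xrightarrow{\sim} H_{\sigma,i}$, so $H_{\sigma,i}$ is irreducible. It is also closed in $X \times X$: it is the graph of the morphism $\sigma|_{X_i}\colon X_i \to X$, which is closed in $X_i \times X$ because $X$ is separated, and $X_i \times X$ is closed in $X \times X$ because $X_i$ is closed in $X$. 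Since $H_\sigma = \bigcup_{i=1}^r H_{\sigma,i}$, we get $\Varsep = \bigcup_{\sigma \in G}\bigcup_{i=1}^r H_{\sigma,i}$, a finite union of irreducible closed subsets.

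Next I would use the standard fact that in a Noetherian space the irreducible components of a finite union of irreducible closed subsets are exactly the maximal members of that collection with respect to inclusion. Indeed, any component $C$ is irreducible and closed, and from $C = \bigcup_{\sigma,i}(C \cap H_{\sigma,i})$ and irreducibility of $C$ we get $C \subseteq H_{\sigma,i}$ for some pair, whence $C = H_{\sigma,i}$ by maximality of $C$; conversely, if $H_{\sigma,i} \subseteq W$ with $W$ irreducible and closed, then $W = \bigcup_{\tau,j}(W \cap H_{\tau,j})$ forces $W \subseteq H_{\tau,j}$ for some $(\tau,j)$, and maximality of $H_{\sigma,i}$ among the $H$'s gives $H_{\sigma,i} = H_{\tau,j} \supseteq W$, i.e.\ $W = H_{\sigma,i}$. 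So it remains only to verify that every $H_{\sigma,i}$ is maximal in the family $\{H_{\tau,j}\}$.

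For this, suppose $H_{\sigma,i} \subseteq H_{\tau,j}$. Then for each $x \in X_i$ the point $(x,\sigma x)$ lies in $H_{\tau,j}$, so $x \in X_j$ and $\sigma x = \tau x$; hence $X_i \subseteq X_j$, which forces $i = j$ since distinct irreducible components of $X$ are incomparable, and moreover $\tau^{-1}\sigma$ fixes $X_i$ pointwise, so $H_{\sigma,i} = \{(x,\sigma x) \mid x \in X_i\} = \{(x,\tau x) \mid x \in X_i\} = H_{\tau,j}$. Thus $H_{\sigma,i}$ is maximal, and the proposition follows. I do not expect a real obstacle; the only points that need a touch of care are the closedness of $H_{\sigma,i}$ in $X \times X$ (where separatedness of $X$ enters) and the harmless observation that one irreducible component may be represented by several pairs $(\sigma,i)$, precisely when the group elements differ by an element acting trivially on $X_i$.
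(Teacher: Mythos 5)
Your proof is correct and follows exactly the route the paper intends: the paper states this proposition without a written proof, treating it as immediate from the preceding observation that $\Varsep$ is the union of the graphs $H_\sigma \cong X$, and your argument simply fills in the details (closedness via separatedness, maximality of the $H_{\sigma,i}$ among themselves, and the standard fact identifying components of a finite union of irreducible closed sets with its maximal members). The remark that distinct pairs $(\sigma,i)$ can yield the same component when the group elements agree on $X_i$ is a correct and worthwhile observation.
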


\begin{remark}\label{RemarkVsepEqudimensionalToo}
With the notation of Proposition~\ref{PropositionIrreducibleComponentsOfVsep} we also see:
\begin{enumerate}
\item[(a)]
It is $\dim(\Varsep) = n$.
\item[(b)] It is 
$\codim_{X \times X}(\Varsep)$ the minimum of all $\codim_{X \times X}(H_{\sigma,i})$. Since $$\codim_{X \times X}(H_{\sigma,i}) = \codim_{X_i \times \sigma X_i}(H_{\sigma,i}) = \dim X_i,$$ the formula
$$ \codim_{X \times X}(\Varsep) = \min \{ \dim(X_i) \mid i = 1 \upto r \}$$
holds. In particular, if $X$ is equidimensional, then $\codim_{X \times X}(\Varsep) = n$.
\end{enumerate}
\end{remark}

\begin{definition}\label{DefinitionKreflectionOnVariety}
Let $k$ be a non-negative integer. An element $\sigma \in G$ is called a \emph{$k$-reflection} (on $X$) if its fixed space $X^{\sigma} := \{ x \in X \mid \sigma x = x \}$ has codimension at most $k$ in $X$. For $k = 1$ we simply say that $\sigma$ is a reflection.
\end{definition}

\begin{remark}\label{RemarkKreflectionAndHeightOfCorrespondingIdeal} For $\sigma \in G$ let $I_{\sigma}$ be the ideal in $K[X]$ generated by all $\sigma f - f$ with $f \in K[X]$. Then $I_{\sigma}$ defines $X^{\sigma}$ as a subvariety of $X$, hence $\sigma$ is a $k$-reflection if and only if for the height of $I_{\sigma}$ we have $\height_{K[X]}(I_{\sigma}) \leq k$.
\end{remark}

Recall that a Noetherian topological space $Y$ is called \emph{connected in codimension $k$} (where $k \in \N_0$) if it satisfies one of the following two equivalent properties:
\begin{align}\label{FirstVersionOfConnectednessInCodim} \text{for all closed subsets $Z \subseteq Y$ with $\codim_Y(Z) > k$}\\ \notag \text{the space $Y \setminus Z$ is connected;} \end{align}
\begin{align}\label{SecondVersionOfConnectednessInCodim} \text{for all irreducible components $Y'$ and $Y''$ of $Y$} \\ \notag \text{there exists a finite sequence $Y_0 \upto Y_r$ of irreducible components of $Y$} \\ \notag \text{with $Y_0 = Y'$, $Y_r = Y''$ and $\codim_Y(Y_i \cap Y_{i+1}) \leq k \quad \text{for } i = 0 \upto r-1.$} \end{align}

Of course, being connected in codimension $k$ implies being connected in codimension $k+1$, so we have a chain of properties of $Y$. The strongest condition, $Y$ being connected in codimension $0$, is equivalent to $Y$ being irreducible. If $\dim(Y) < \infty$, then being connected in codimension $\dim(Y)$ simply means being connected.

It was shown by Dufresne \cite{dufresne2009separating} that for a linear action of $G$ on $X = K^n$ connectedness in codimension 1 of $\Varsep$ implies that the group is generated by 1-reflections. In the following theorem we extend this to non-linear actions and to $k > 1$, and we also add a converse, which will be needed later on.

\begin{thm}\label{TheoremVsepConnectedInCodimIfAndOnlyIfGgenerated}
The separating variety $\Varsep$ is connected in codimension $k$ if and only if $X$ is connected in codimension $k$ and $G$ is generated by $k$-reflections.
\end{thm}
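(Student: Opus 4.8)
The plan is to work with the decomposition of $\Varsep$ into irreducible components $H_{\sigma,i}$ from Proposition~\ref{PropositionIrreducibleComponentsOfVsep}, and to translate the combinatorial description \eqref{SecondVersionOfConnectednessInCodim} of "connected in codimension $k$" into a statement about group elements. First I would analyze when two components $H_{\sigma,i}$ and $H_{\tau,j}$ meet in small codimension. A point $(x,\sigma x)$ with $x\in X_i$ lies also in $H_{\tau,j}$ exactly when $x\in X_i\cap X_j$ and $\sigma x=\tau x$, i.e. $x\in X^{\tau^{-1}\sigma}$. Since $H_{\sigma,i}\cong X_i$ via the first projection, the codimension of $H_{\sigma,i}\cap H_{\tau,j}$ inside $\Varsep$ (which has pure dimension $n$ on each component by Remark~\ref{RemarkVsepEqudimensionalToo}, but note components may have different dimensions) is governed by $\dim(X_i\cap X_j\cap X^{\tau^{-1}\sigma})$. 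So the intersection pattern of the components of $\Varsep$ is controlled simultaneously by the intersection pattern of the components $X_i$ of $X$ and by the fixed loci $X^{g}$ of the elements $g=\tau^{-1}\sigma$, i.e. by which elements are $k$-reflections (Remark~\ref{RemarkKreflectionAndHeightOfCorrespondingIdeal}).

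For the direction "$\Varsep$ connected in codim $k$ $\Rightarrow$ $X$ connected in codim $k$ and $G$ generated by $k$-reflections": the first conclusion follows because the diagonal copy $\{H_{\id,i}\}$ of the components of $X$ sits inside the component set of $\Varsep$, and a connecting chain between $H_{\id,i}$ and $H_{\id,j}$ can be projected (via the first projection, or rather by tracking which $X_i$ occurs) to a chain of components of $X$; one has to check that a step $\codim(H_{\sigma,i}\cap H_{\tau,j})\le k$ forces in particular $\codim_X(X_i\cap X_j)\le k$ when $i\ne j$, which is immediate since $X_i\cap X_j\cap X^{g}\subseteq X_i\cap X_j$. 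For the reflection statement, fix a component $X_i$ and consider, for $\sigma\in G$, the chain in $\Varsep$ joining $H_{\id,i}$ to $H_{\sigma,i}$; along this chain consecutive components $H_{g_\ell,i_\ell}$ and $H_{g_{\ell+1},i_{\ell+1}}$ meet in codimension $\le k$, which as above yields $\dim(X_{i_\ell}\cap X_{i_{\ell+1}}\cap X^{g_{\ell+1}^{-1}g_\ell})$ large, hence in particular $g_{\ell+1}^{-1}g_\ell$ is a $k$-reflection (its fixed locus has codimension $\le k$ because it contains a subvariety of dimension $\ge n-k$). Telescoping the differences $g_{\ell+1}^{-1}g_\ell$ expresses $\sigma$ (up to the bookkeeping of which $g_\ell$ is attached to which component) as a product of $k$-reflections; doing this for all $\sigma$ shows $G$ is generated by $k$-reflections. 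The mild subtlety here is that the chain may wander across several components $X_{i_\ell}$, so I would phrase the argument as: each "$H$-edge" of the chain gives a $k$-reflection in $G$, and the endpoints being $H_{\id,i}$ and $H_{\sigma,i}$ force the product of these reflections to equal $\sigma$.

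For the converse, assume $X$ is connected in codimension $k$ and $G=\langle \rho_1\upto \rho_m\rangle$ with each $\rho_t$ a $k$-reflection. Given two components $H_{\sigma,i}$ and $H_{\tau,j}$ of $\Varsep$, I would first move within the "$\sigma$-sheet" $\{H_{\sigma,\bullet}\}$: since $X$ is connected in codim $k$, there is a chain of components $X_i=X_{i_0}\upto X_{i_s}=X_j$ with $\codim_X(X_{i_\ell}\cap X_{i_{\ell+1}})\le k$, and then $H_{\sigma,i_\ell}\cap H_{\sigma,i_{\ell+1}}\supseteq \{(x,\sigma x):x\in X_{i_\ell}\cap X_{i_{\ell+1}}\}$ has the required codimension; this reduces to the case $i=j$. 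Next, to pass from $H_{\sigma,j}$ to $H_{\tau,j}$, write $\tau^{-1}\sigma=\rho_{t_1}\cdots\rho_{t_p}$ and interpolate: consecutive partial products differ by one $k$-reflection $\rho$, and $H_{g,j}\cap H_{g\rho,j}\supseteq\{(x,gx):x\in X_j\cap X^{\rho}\}$, which has dimension $\ge n-k$ (here I use that $X^\rho$ meets the component $X_j$ in codimension $\le k$ — this needs that $X^\rho$ has codimension $\le k$ in $X$, and one should note that $X^\rho\cap X_j$ can be empty or small only if $X^\rho$ avoids $X_j$; to handle this cleanly I would first use connectedness in codim $k$ of $X$ to route through whichever component actually meets $X^\rho$, just as in the forward direction). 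Chaining these steps connects $H_{\sigma,i}$ to $H_{\tau,j}$ in codimension $k$, which by \eqref{SecondVersionOfConnectednessInCodim} gives that $\Varsep$ is connected in codimension $k$.

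The main obstacle I anticipate is the bookkeeping caused by $X$ being reducible with components of possibly different dimensions: one must be careful that "codimension $\le k$ of an intersection inside $\Varsep$" is the right notion (codimension relative to the ambient component, per Remark~\ref{RemarkVsepEqudimensionalToo}(b)), and that the fixed locus $X^{g}$ of a non-reflection could still meet some small component $X_i$ in low codimension — so "edge of the chain in $\Varsep$" does not literally say "$g$ is a global $k$-reflection" unless one is careful, though for the forward direction it does give $\height_{K[X]}(I_g)\le k$ which is exactly $k$-reflection by Remark~\ref{RemarkKreflectionAndHeightOfCorrespondingIdeal}. Organizing the argument so that both directions are handled uniformly — perhaps by proving a single lemma computing $\codim_{\Varsep}(H_{\sigma,i}\cap H_{\tau,j})=\max(\dim X_i,\dim X_j)-\dim(X_i\cap X_j\cap X^{\tau^{-1}\sigma})$ and then reasoning purely combinatorially — is the cleanest route, and is what I would do.
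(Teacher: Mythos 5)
Your proposal is correct and follows essentially the same route as the paper: decompose $\Varsep$ into the components $H_{\sigma,i}$, identify $H_{\sigma,i}\cap H_{\tau,j}$ with $(X_i\cap X_j)^{\tau^{-1}\sigma}$ so that $\codim_{\Varsep}(H_{\sigma,i}\cap H_{\tau,j})=\codim_X((X_i\cap X_j)^{\tau^{-1}\sigma})$, read off both conclusions from a connecting chain by telescoping, and for the converse first move within a fixed $\sigma$-sheet and then interpolate along a factorization into $k$-reflections, routing each step through a component $X_{i_l}$ on which $X^{\tau_l}$ actually has codimension $\leq k$ --- exactly the paper's resolution of the subtlety you flag. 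The only caveat is your parenthetical closing formula with $\max(\dim X_i,\dim X_j)$, which is not needed for the argument and should just be the paper's $\codim_X$ version.
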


\begin{proof}
Again, let $X = \bigcup_{i=1}^r X_i$ be decomposed into its irreducible components $X_i$, which leads to the components $H_{\sigma,i}$ of $\Varsep$ as seen in Proposition~\ref{PropositionIrreducibleComponentsOfVsep}. First, we look at the intersection of two components of $\Varsep$ to see which codimension arises. For $\sigma, \, \tau \in G$ and indices $i, \, j$ we have
\[ H_{\sigma, i} \cap H_{\tau, j} = \{ (x,y) \mid x \in X_i \cap X_j, \, y = \sigma x = \tau x \} \cong (X_i \cap X_j)^{\tau^{-1}\sigma}.\]
We know from Remark~\ref{RemarkVsepEqudimensionalToo} that $\dim(X) = n = \dim(\Varsep)$. In addition, we get
\begin{equation}\label{EquationCodimensionIntersection1} \codim_{\Varsep}(H_{\sigma, i} \cap H_{\tau, j}) = \codim_X((X_i \cap X_j)^{\tau^{-1}\sigma}).\end{equation}
Suppose $\Varsep$ is connected in codimension $k$. By assumption, for all $\sigma \in G$ and $i, j$ there exists a sequence of irreducible components $H_{\sigma_0, i_0},\, \dots,\, H_{\sigma_s, i_s}$ of $\Varsep$ with $i_0 = i,\, i_s = j,\, \sigma_0 = \iota$ (the neutral element of $G$), $\sigma_s = \sigma$ and 
\begin{equation}\label{EquationCodimensionIntersection2}\codim_{\Varsep}\left(H_{\sigma_l, i_l} \cap H_{\sigma_{l+1}, i_{l+1}}\right) \leq k  \quad \text{ for } 0 \leq l \leq s-1. \end{equation}
Putting (\ref{EquationCodimensionIntersection1}) and (\ref{EquationCodimensionIntersection2}) together leads to the inequality \begin{equation}\label{EquationCodimensionIntersection3}\codim_{X}\left(X_{i_l} \cap X_{i_{l+1}}\right)^{\sigma_{l}^{-1} \sigma_{l+1}}) \leq k  \quad \text{ for } 0 \leq l \leq s-1. \end{equation}
In particular, (\ref{EquationCodimensionIntersection3}) shows that $X_{i_l} \cap X_{i_{l+1}}$ has codimension $\leq k$. So we have a sequence of irreducible components from $X_{i_0} = X_i$ to $X_{i_s} = X_j$ that intersect in codimension $\leq k$, hence $X$ is connected in codimension $k$.

Moreover, (\ref{EquationCodimensionIntersection3}) implies that all $X^{\sigma_{l}^{-1} \sigma_{l+1}}$ have codimension $\leq k$, i.e., each $\sigma_{l}^{-1} \sigma_{l+1}$ is a $k$-reflection. Using $\sigma_0 = \iota$ and $\sigma_s = \sigma$ we can write 
\[\sigma = \sigma_0^{-1} \sigma_s = (\sigma_0^{-1} \sigma_1) \cdot (\sigma_1^{-1} \sigma_2) \cdot \dots \cdot (\sigma_{s-1}^{-1} \sigma_s)\]
as a product of $k$-reflections.

So we have proven the only-if-part by simply splitting (\ref{EquationCodimensionIntersection3}) into two weaker conclusions. It may therefore be surprising that the converse holds as well.

To prove it, let us start with indices $i,\, j$, and a sequence of components $X_{i_0},\, \dots,\, X_{i_s}$ with $X_i = X_{i_0}, \, X_j = X_{i_s}$ and $\codim_X(X_{i_l} \cap X_{i_{l+1}}) \leq k$. Consequently, for $\sigma \in G$ we know from (\ref{EquationCodimensionIntersection1}), that all $H_{\sigma, i_l} \cap H_{\sigma, i_{l+1}}$ have codimension $\leq k$. So we already have a sequence from $H_{\sigma, i}$ to $H_{\sigma, j}$ as desired.

Now take two elements $\sigma', \, \sigma'' \in G$. By assumption, there exist $k$-reflections $\tau_1,\, \ldots,\, \tau_s \in G$ with $(\sigma')^{-1} \sigma'' = \tau_1 \cdot \ldots \cdot \tau_s$.
Since for all $l$ we have
\[\min \{ \codim_X(X_m^{\tau_l}) \mid m = 1 \upto r \} = \codim_X(X^{\tau_l}) \leq k,\]
for each $\tau_l$ there exists an $i_l$ such that
\begin{equation}\label{EquationCodimensionIntersection4} \codim_X(X_{i_l}^{\tau_l}) \leq k. \end{equation}
If we write $\sigma_0 := \sigma'$ and $\sigma_l := \sigma_{l-1} \tau_l$ for $l = 1 \upto s$, then
\[ \sigma_s = \sigma_0 \cdot \tau_1 \cdot \ldots \cdot \tau_s = \sigma' \cdot (\sigma'^{-1} \cdot \sigma'') = \sigma''.\]
It follows from (\ref{EquationCodimensionIntersection1}) together with (\ref{EquationCodimensionIntersection4}) that
\[\codim_{\Varsep}(H_{\sigma_{l-1}, i_l} \cap H_{\sigma_{l}, i_l}) = \codim_{X}((X_{i_l})^{\sigma_{l-1}^{-1} \sigma_l}) =  \codim_X((X_{i_l})^{\tau_l}) \leq k.\]
We already saw how to construct a sequence of components from every $H_{\sigma, i_l}$ to $H_{\sigma, i_{l+1}}$ as desired. Putting these together, for all $i,\, j$ we can construct a sequence
\[H_{\sigma_0, i},\, \ldots ,\, H_{\sigma_0, i_1}, H_{\sigma_1, i_1},\, \ldots ,\, H_{\sigma_1, i_2}, H_{\sigma_2, i_2},\, \ldots ,\, H_{\sigma_s, i_s},\, \ldots ,\, H_{\sigma_s, j},\]
from $H_{\sigma', i}$ to $H_{\sigma'', j}$ such that two successive components intersect in codimension $\leq k$.
\end{proof}

We specialize Theorem~\ref{TheoremVsepConnectedInCodimIfAndOnlyIfGgenerated} to the case $k = n = \dim(X)$.

\begin{cor} \label{CorollaryVsepConnectedIfAndOnlyIf}
The separating variety $\Varsep$ is connected if and only if $X$ is connected and $G$ is generated by elements having a fixed point.
\end{cor}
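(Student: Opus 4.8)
The plan is to derive Corollary~\ref{CorollaryVsepConnectedIfAndOnlyIf} as the special case $k = n = \dim(X)$ of Theorem~\ref{TheoremVsepConnectedInCodimIfAndOnlyIfGgenerated}. The key observation is that ``connected in codimension $\dim(Y)$'' just means ``connected'' for a finite-dimensional Noetherian space $Y$; this was noted right before the theorem. Since $\dim(\Varsep) = n$ by Remark~\ref{RemarkVsepEqudimensionalToo}(a), the space $\Varsep$ is connected in codimension $n$ if and only if it is connected. Likewise, $\dim(X) = n$, so $X$ is connected in codimension $n$ if and only if $X$ is connected. Thus, taking $k = n$ in Theorem~\ref{TheoremVsepConnectedInCodimIfAndOnlyIfGgenerated}, the statement ``$\Varsep$ is connected in codimension $n$ iff $X$ is connected in codimension $n$ and $G$ is generated by $n$-reflections'' translates into ``$\Varsep$ is connected iff $X$ is connected and $G$ is generated by $n$-reflections''.

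The only remaining point is to rewrite ``$G$ is generated by $n$-reflections'' as ``$G$ is generated by elements having a fixed point''. By Definition~\ref{DefinitionKreflectionOnVariety}, an element $\sigma \in G$ is an $n$-reflection precisely when $\codim_X(X^\sigma) \le n$, i.e.\ when $\dim(X^\sigma) \ge 0$, which is equivalent to $X^\sigma \neq \emptyset$, that is, to $\sigma$ having a fixed point in $X$. (One should note that $\codim_X(X^\sigma)$ is taken with respect to the maximal dimension of an irreducible component of $X$ meeting $X^\sigma$, so that $X^\sigma = \emptyset$ forces the codimension to exceed $n$; this is the usual convention and is consistent with Remark~\ref{RemarkKreflectionAndHeightOfCorrespondingIdeal}, where $\height_{K[X]}(I_\sigma) \le n$ always holds unless $I_\sigma$ is the unit ideal.) Substituting this equivalence into the specialized theorem yields exactly the statement of the corollary.

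I do not expect any real obstacle here: the proof is a one-line specialization plus an unwinding of definitions. The only thing to be careful about is the dimension bookkeeping in the degenerate case — making sure that when $X$ is not equidimensional, ``connected in codimension $n$'' is still equivalent to ``connected'' (it is, since the relevant codimension in~\eqref{FirstVersionOfConnectednessInCodim} is bounded by $\dim(Y) \le n$, so the only closed subset $Z$ with $\codim_Y(Z) > n$ is $Z = \emptyset$) — and similarly that an element with no fixed point genuinely fails to be an $n$-reflection. Given these observations, the corollary follows immediately.

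\begin{proof}
Since $\dim(\Varsep) = n = \dim(X)$ (see Remark~\ref{RemarkVsepEqudimensionalToo}(a)), and a finite-dimensional Noetherian space $Y$ is connected in codimension $\dim(Y)$ if and only if it is connected, the space $\Varsep$ is connected in codimension $n$ if and only if it is connected, and $X$ is connected in codimension $n$ if and only if it is connected. Moreover, by Definition~\ref{DefinitionKreflectionOnVariety} an element $\sigma \in G$ is an $n$-reflection if and only if $\codim_X(X^\sigma) \le n$, which (as $\codim_X$ of the empty set exceeds $n$) holds exactly when $X^\sigma \neq \emptyset$, i.e.\ when $\sigma$ has a fixed point in $X$. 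Applying Theorem~\ref{TheoremVsepConnectedInCodimIfAndOnlyIfGgenerated} with $k = n$ and substituting these equivalences gives the claim.
\end{proof}
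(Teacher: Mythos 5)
Your proof is correct and is exactly the paper's (implicit) argument: the paper simply states that the corollary is the specialization of Theorem~\ref{TheoremVsepConnectedInCodimIfAndOnlyIfGgenerated} to $k = n = \dim(X)$, and you have carried out that specialization, carefully unwinding ``connected in codimension $\dim(Y)$'' as ``connected'' and ``$n$-reflection'' as ``has a fixed point.'' Nothing is missing.
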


Let us finish this section with a few remarks about ideals that define $\Varsep$. When studying the separating property, the following map naturally comes into play:
\begin{equation}\label{EquationDefinitionOfDeltaMap} \delta: K[X] \to K[X] \otimes_K K[X], \quad f \mapsto f \otimes 1 - 1 \otimes f.\end{equation}
Following the notation of \cite{dufresne2009separating} we will refer to (\ref{EquationDefinitionOfDeltaMap}) as the $\delta$-map throughout this paper. It is obviously $K$-linear and a quick calculation shows that
\[ \delta(fg) = \delta(f) (g \otimes 1) + (f \otimes 1) \delta(g) - \delta(f) \delta(g)\]
for all $f,\,g \in K[X]$. So in particular, $\delta(fg)$ lies in the ideal generated by $\delta(f)$ and $\delta(g)$ in $K[X] \otimes_K K[X]$. This shows that the images under $\delta$ of all generating sets of invariants generate the same ideal, which we will denote by $\Isep$, i.e., \[\Isep  := (\delta(K[X]^G))_{K[X] \otimes_K K[X]}.\]
The definition of $\Varsep$ now reads as
\[ \Varsep := \Var_{X \times X}(\Isep) := \{ (x,\,y) \in X \times X \mid h(x,\,y) = 0 \text{ for all } h \in \Isep \}.\]
Furthermore, a set of invariants $S \subseteq K[X]^G$ is now separating if and only if the image of $S$ under the $\delta$-map defines the subvariety $\Varsep \subseteq X \times X$ as its vanishing set. By Hilbert's Nullstellensatz, this is equivalent to the condition that the radical of the ideal generated by $\delta(S)$ and the radical of $\Isep$ conincide. Thus we have proved:

\begin{prop}\label{PropositionSeparatingRadicalCondition} A subset $S \subseteq K[X]^G$ is separating if and only if $\sqrt{(\delta(S))} = \sqrt{\Isep}$.
\end{prop}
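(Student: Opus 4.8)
The plan is to unwind the definitions and reduce the statement to the description of $\Varsep$ as a vanishing set, which was just established. First I would observe that, by the discussion preceding the proposition, we already know $\Varsep = \Var_{X\times X}(\Isep)$, so the separating property of a subset $S \subseteq K[X]^G$ is, by definition, the statement that $\Var_{X\times X}(\delta(S)) = \Varsep = \Var_{X\times X}(\Isep)$. Indeed: a pair $(x,y)$ lies in $\Var_{X\times X}(\delta(S))$ iff $g(x) = g(y)$ for all $g \in S$, and it lies in $\Varsep$ iff $f(x) = f(y)$ for all $f \in K[X]^G$. Saying these two closed sets agree is exactly saying that whenever some invariant separates $x$ and $y$, some element of $S$ does too (and conversely, which is automatic since $S \subseteq K[X]^G$). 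So the content of ``$S$ is separating'' translates without loss into ``$\Var_{X\times X}(\delta(S)) = \Var_{X\times X}(\Isep)$''.

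The remaining step is purely commutative-algebraic: for ideals $\mathfrak a, \mathfrak b$ in a finitely generated $K$-algebra $R$ over an algebraically closed field $K$, we have $\Var_R(\mathfrak a) = \Var_R(\mathfrak b)$ if and only if $\sqrt{\mathfrak a} = \sqrt{\mathfrak b}$. This is Hilbert's Nullstellensatz, applied with $R = K[X]\otimes_K K[X] = K[X \times X]$, $\mathfrak a = (\delta(S))$, and $\mathfrak b = \Isep$. Combining the two reductions gives the equivalence claimed: $S$ is separating $\iff$ $\Var_{X\times X}((\delta(S))) = \Var_{X\times X}(\Isep)$ $\iff$ $\sqrt{(\delta(S))} = \sqrt{\Isep}$.

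There is essentially no obstacle here; the proposition is a bookkeeping statement and the real work has already been done in setting up $\Isep$ and the identity $\Varsep = \Var_{X\times X}(\Isep)$. The only point requiring a word of care is the direction ``if there is no separating invariant distinguishing $x,y$ then no element of $S$ does'': this holds trivially because $S \subseteq K[X]^G$, so $\delta(S) \subseteq \delta(K[X]^G) \subseteq \Isep$, whence $\Varsep \subseteq \Var_{X\times X}(\delta(S))$ always, and the separating condition supplies only the reverse inclusion. I would spell this out in one line and then invoke the Nullstellensatz to finish.
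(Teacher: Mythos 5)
Your argument is correct and is essentially the paper's own proof: the paper likewise observes that $S$ is separating precisely when $\delta(S)$ cuts out $\Varsep = \Var_{X\times X}(\Isep)$ and then invokes Hilbert's Nullstellensatz to pass to equality of radicals. Your extra remark that the inclusion $\Varsep \subseteq \Var_{X\times X}(\delta(S))$ is automatic because $S \subseteq K[X]^G$ is a fine (if implicit in the paper) clarification.
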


%%%%%%%%%%%%%%%%%%%%%%%%%%%%%%%%%%%%%%%%%%%%%%%%%%%%%%%%%%%%%%%%%%%%%%
\section{Polynomial Separating Algebras}\label{SectionPolySepAlgebras}
%%%%%%%%%%%%%%%%%%%%%%%%%%%%%%%%%%%%%%%%%%%%%%%%%%%%%%%%%%%%%%%%%%%%%%

Hartshorne's connectedness theorem \cite{hartshorne1962complete} is a main ingredient to the results of this section.
It relates the connectedness property of the spectrum of a Noetherian ring $R$ to the depths of localizations of $R$. We will use it in a version with the \emph{Cohen-Macaulay defect}. Recall that the Cohen-Macaulay defect of a Noetherian local ring $R$ is defined to be
\[ \cmdef(R) := \dim(R) - \depth(R) \in \N_0.\] 
More generally, for a Noetherian ring $R$ the Cohen-Macaulay defect is defined to be
\[\cmdef(R) := \sup\, \{\, \cmdef(R_{\p}) \mid \p \in \Spec(R) \,\} \in \N_0 \cup \{\infty\}, \]
which can be shown to be consistent with the definition of $\cmdef(R)$ for a local ring.

\begin{thm} \emph{\textbf{(Hartshorne \cite{hartshorne1962complete})}}\label{TheoremHartshornesConnectednessWithCMDEF}
Let $R$ be a Noetherian ring. Assume that $\Spec(R)$ is connected and that $k := \cmdef(R)$ is finite. Then $\Spec(R)$ is connected in codimension $k + 1$. 
\end{thm}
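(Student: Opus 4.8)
The plan is to follow Hartshorne's original argument, whose engine is the local-cohomological characterization of depth: for a Noetherian local ring $(A,\m)$ one has $H^i_{\m}(A)=0$ for $i<\depth(A)$, together with an exact sequence $0\to H^0_{\m}(A)\to A\to \Gamma(U_A,\mathcal O_{U_A})\to H^1_{\m}(A)\to 0$, where $U_A:=\Spec(A)\setminus\{\m\}$ is the punctured spectrum. From this I would first extract the key local statement: if $\depth(A)\ge 2$, then $\Gamma(U_A,\mathcal O_{U_A})\cong A$; since a local ring has no idempotents other than $0$ and $1$, while a scheme decomposed as a disjoint union of two nonempty open subschemes has a nontrivial idempotent in its ring of global sections, this forces $U_A$ to be connected. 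I would simply cite a standard reference (e.g. Brodmann--Sharp) for the local cohomology facts rather than reprove them.

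Granting this, suppose for contradiction that $\Spec(R)$ is connected but not connected in codimension $k+1$. By characterization~\eqref{FirstVersionOfConnectednessInCodim} there is a closed set $Z\subseteq\Spec(R)$ with $\codim_{\Spec(R)}(Z)\ge k+2$ such that $\Spec(R)\setminus Z=U_1\sqcup U_2$ with $U_1,U_2$ nonempty and open. Put $Y_i:=\overline{U_i}$. Since $\codim(Z)\ge 1$, the set $Z$ contains no minimal prime, so $U_1\cup U_2$ is dense, $Y_1\cup Y_2=\Spec(R)$, and $Y_1\cap Y_2\subseteq Z$ (a common point outside $Z$ would lie in $U_1\cap U_2=\emptyset$). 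The $Y_i$ are proper closed subsets, so connectedness of $\Spec(R)$ gives $Y_1\cap Y_2\neq\emptyset$, and hence $\codim_{\Spec(R)}(Y_1\cap Y_2)\ge k+2$.

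Now let $\q$ be the generic point of an irreducible component of $Y_1\cap Y_2$ and set $A:=R_{\q}$. Then $\dim(A)=\height(\q)\ge\codim_{\Spec(R)}(Y_1\cap Y_2)\ge k+2$, while $\cmdef(A)\le\cmdef(R)=k$, so $\depth(A)=\dim(A)-\cmdef(A)\ge 2$. On the other hand, pulling $Y_1,Y_2$ back along $\Spec(A)\hookrightarrow\Spec(R)$ yields closed sets $Y_1',Y_2'$ with $Y_1'\cup Y_2'=\Spec(A)$; because $\q$ is the generic point of a component of $Y_1\cap Y_2$, no prime $\p\subsetneq\q$ can lie in $Y_1\cap Y_2$ (such a $\p$ would lie in some component $W$, forcing $\q\in W$, hence $W=\overline{\{\q\}}$ as both are components, hence $\q\subseteq\p$, contradicting $\p\subsetneq\q$), so $Y_1'\cap Y_2'=\{\q A\}$. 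Finally, since $\q\in\overline{U_i}\setminus U_i$ it is the specialization of some point of $U_i$, so each $Y_i'$ strictly contains the closed point $\q A$; therefore $U_A=\Spec(A)\setminus\{\q A\}=(Y_1'\setminus\{\q A\})\sqcup(Y_2'\setminus\{\q A\})$ is disconnected. This contradicts the local statement, as $\depth(A)\ge 2$, and completes the proof.

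The step I expect to be the main obstacle — and the one I would import rather than prove — is the local-cohomological input: the vanishing $H^i_{\m}(A)=0$ for $i<\depth(A)$ and the identification of $\Gamma(U_A,\mathcal O_{U_A})$ via the exact sequence above. Everything else is elementary manipulation of Noetherian spectra, together with the inequality $\cmdef(R_{\q})\le\cmdef(R)$, which is immediate from the definition of the Cohen--Macaulay defect of a non-local ring as a supremum over localizations.
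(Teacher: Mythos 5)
Your proposal is correct, and in substance it follows the same route as the paper: everything hinges on the observation that finiteness of $\cmdef(R)=k$ forces $\depth(R_\pprime)\geq 2$ at every prime of height greater than $k+1$. The difference is self-containedness. The paper stops at that depth estimate and cites Hartshorne's Corollary 2.3 together with Remark 1.3.2 to convert ``connected, with depth $\geq 2$ at all primes of large height'' into the stated codimension bound, whereas you reconstruct the cited input: the local statement that a Noetherian local ring of depth $\geq 2$ has connected punctured spectrum (via $H^0_{\m}=H^1_{\m}=0$, the four-term sequence, and the absence of nontrivial idempotents in a local ring), and the globalization obtained by taking a disconnecting closed set $Z$ of codimension $\geq k+2$, forming $Y_i=\overline{U_i}$, and localizing at a generic point $\q$ of a component of $Y_1\cap Y_2$; your verification that the punctured spectrum of $R_{\q}$ is then disconnected is complete (the one loosely worded step, ``a common point outside $Z$ would lie in $U_1\cap U_2$'', should rather say that such a point lies in one $U_i$ and, being in $\overline{U_j}$, forces the open set $U_i$ to meet $U_j$ --- a trivial repair). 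What your version buys is a proof that is complete modulo only the standard local-cohomology characterization of depth; what the paper's buys is brevity. A small bonus of your bookkeeping: the bound $\height(\q)\geq k+2$ delivers exactly the codimension $k+1$ asserted in Theorem~\ref{TheoremHartshornesConnectednessWithCMDEF}, whereas the paper's own proof text ends by claiming connectedness in codimension $k$, an off-by-one slip relative to the statement ($k+1$ is the sharp value, as the two-planes Example~\ref{ExampleTwoAffinePlanesIntersectingInAPoint} shows).
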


\begin{proof}
We have the following condition on the prime ideals $\p$ of $R$: If $\height_R(\p) > k + 1$, then
\[ k \geq \height_R(\p) - \depth(R_{\p}) \geq k + 2 - \depth(R_{\p}),\]
and hence $\depth(R_{\p}) \geq 2$. In \cite[Corollary 2.3]{hartshorne1962complete} it is shown that $\Spec(R)$ is \emph{locally} connected in codimension $k$ under this condition. If we require $\Spec(R)$ to be connected, this implies that $\Spec(R)$ is connected in codimension $k$ (see \cite[Remark 1.3.2]{hartshorne1962complete}).
\end{proof}

Now again let $X$ be a $G$-variety. Combining Hartshorne's connectedness theorem with Theorem~\ref{TheoremVsepConnectedInCodimIfAndOnlyIfGgenerated} leads to the following.

\begin{thm}\label{TheoremCMDEFimpliesReflectionGroup} Assume that $X$ is connected and that $G$ is generated by elements having a fixed point. Write $R = K[X] \otimes_K K[X]$ and define
\[ k := \min\, \{ \cmdef( R/J) \mid J \subseteq R \text{ an ideal with } \sqrt{J} = \sqrt{\Isep} \} .\] 
Then $G$ is generated by $(k+1)$-reflections.
\end{thm}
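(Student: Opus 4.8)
The plan is to read the statement as a three-step chain: produce an ideal $J$ realizing the minimum, move the (topological) connectedness of $\Varsep$ onto $\Spec(R/J)$, feed that into Hartshorne's theorem to upgrade it to connectedness in codimension $k+1$, move it back to $\Varsep$, and finally invoke the already-proved Theorem~\ref{TheoremVsepConnectedInCodimIfAndOnlyIfGgenerated} to extract the statement about $(k+1)$-reflections.

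First I would check that the minimum defining $k$ is taken over a nonempty set of finite integers, so that Hartshorne's theorem can be applied. The ideal $J = \sqrt{\Isep}$ is admissible, and $R/\sqrt{\Isep}$ is the coordinate ring of $\Varsep$, a finitely generated $K$-algebra of dimension $n$ (Remark~\ref{RemarkVsepEqudimensionalToo}); for any finitely generated $K$-algebra $S$ one has $\cmdef(S_{\p}) = \dim(S_{\p}) - \depth(S_{\p}) \le \dim(S)$, so $0 \le k \le n$, and in particular $k$ is finite. Fix an ideal $J \subseteq R$ with $\sqrt{J} = \sqrt{\Isep}$ and $\cmdef(R/J) = k$.

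Next I would note that $\Spec(R/J)$ and $\Varsep$ are homeomorphic: since $R/J$ and $R/\sqrt{J} = R/\sqrt{\Isep}$ have the same prime spectrum, and $\Spec(R/\sqrt{\Isep}) = \Var_{X \times X}(\Isep) = \Varsep$ as topological spaces, any property phrased purely in terms of the underlying space — connectedness, or connectedness in a given codimension, the latter because the codimension of a closed subset is computed from the poset of irreducible closed subsets — transfers freely between the two. Now I apply the hypotheses: $X$ is connected and $G$ is generated by elements having a fixed point, so Corollary~\ref{CorollaryVsepConnectedIfAndOnlyIf} shows $\Varsep$, hence $\Spec(R/J)$, is connected. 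By Theorem~\ref{TheoremHartshornesConnectednessWithCMDEF} applied to the Noetherian ring $R/J$ with $\cmdef(R/J) = k$ finite, $\Spec(R/J)$ is connected in codimension $k+1$; transferring back, $\Varsep$ is connected in codimension $k+1$. Finally Theorem~\ref{TheoremVsepConnectedInCodimIfAndOnlyIfGgenerated} with this value states that $X$ is connected in codimension $k+1$ and $G$ is generated by $(k+1)$-reflections, the second of which is the assertion.

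The step I expect to require the most care — though it is conceptual rather than computational — is the clean separation of the two kinds of invariant in play: $\cmdef(R/J)$ is a genuinely scheme-theoretic quantity that can be lowered by a non-reduced structure on $\Varsep$ (which is exactly why a minimum over all such $J$ appears in the statement), whereas "connected" and "connected in codimension $k+1$" only see the reduced space $\Varsep$. Spelling out the homeomorphism $\Spec(R/J) \cong \Varsep$ and observing that codimension of closed subsets is a topological invariant is precisely what lets Hartshorne's theorem (an assertion about $\Spec(R/J)$) be combined with Theorem~\ref{TheoremVsepConnectedInCodimIfAndOnlyIfGgenerated} (an assertion about $\Varsep$) without any loss.
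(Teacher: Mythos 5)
Your proposal is correct and follows essentially the same route as the paper: establish connectedness of $\Varsep$ via Corollary~\ref{CorollaryVsepConnectedIfAndOnlyIf}, transfer it through the homeomorphism $\Spec(R/J)\cong\Spec(K[\Varsep])$ to an ideal $J$ realizing the minimum, apply Theorem~\ref{TheoremHartshornesConnectednessWithCMDEF} to get connectedness in codimension $k+1$, and conclude with Theorem~\ref{TheoremVsepConnectedInCodimIfAndOnlyIfGgenerated}. Your extra remarks (finiteness of $k$ via $\cmd(R/J)\le\dim(R/J)=n$, and the purely topological nature of the codimension conditions) are sound details that the paper leaves implicit.
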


\begin{proof}
The assumptions on $X$ and the action of $G$ imply that $\Varsep$ is connected by Corollary~\ref{CorollaryVsepConnectedIfAndOnlyIf}. Let $J$ be an ideal in $R$ with $\sqrt{J} = \sqrt{\Isep}$ and $k = \cmdef(R/J)$.
Theorem~\ref{TheoremHartshornesConnectednessWithCMDEF} tells us now that $\Spec(R/J)$, which is homeomorphic to $\Spec(K[\Varsep])$, is connected in codimension $k+1$. Of course, it is equivalent to say that $\Varsep$ is connected in codimension $k+1$. Therefore, by Theorem~\ref{TheoremVsepConnectedInCodimIfAndOnlyIfGgenerated}, 
$G$ is generated by $(k+1)$-reflections.
\end{proof}

Example~\ref{ExampleDifferentCMDEF} will show that $\Isep$ need not be radical and that neither $\Isep$ nor $\sqrt{\Isep}$ must have the smallest 
Cohen-Macaulay defect among all ideals $J \subseteq K[X] \otimes_K K[X]$ with $\sqrt{J} = \sqrt{\Isep}$. In particular, the number $k$ in Theorem~\ref{TheoremCMDEFimpliesReflectionGroup} need not be the Cohen-Macaulay defect of $K[\Varsep]$. Since an ideal is called set-theoretically Cohen-Macaulay if there exists a Cohen-Macaulay ideal with the same radical (cf. \cite{singh2007local}), we propose to call this number the \emph{set-theoretical Cohen-Macaulay defect} of $\Isep$ (or $\Varsep$). 

To the best of the author's knowledge, no algorithm is known to compute the set-theoretical Cohen-Macaulay defect of $\Isep$. In several examples (like Example~\ref{ExampleDifferentCMDEF}) it indeed coincides with the minimal number $l$ such that $G$ is generated by $(l+1)$-reflections. Based on these examples we make the following conjecture.

\begin{conj} Assume the notation and hypotheses of Theorem~\ref{TheoremCMDEFimpliesReflectionGroup}. Then \[ k  = \min \{ l \in \N_0 \mid G \text{ is generated by $(l+1)$-reflections}\}.\]
\end{conj}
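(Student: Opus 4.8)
The plan is to establish the two inequalities separately. One of them is immediate: Theorem~\ref{TheoremCMDEFimpliesReflectionGroup} says that $G$ is generated by $(k+1)$-reflections, so, writing $l_0 := \min\{\,l \in \N_0 \mid G \text{ is generated by }(l+1)\text{-reflections}\,\}$, we get $l_0 \le k$. The real content is the reverse inequality $k \le l_0$, and for this it suffices to exhibit one ideal $J \subseteq R = K[X]\otimes_K K[X]$ with $\sqrt{J} = \sqrt{\Isep}$ and $\cmdef(R/J) \le l_0$; then $k \le \cmdef(R/J) \le l_0$ by the very definition of $k$. So from now on fix a generating set $\tau_1\upto\tau_m$ of $G$ consisting of $(l_0+1)$-reflections.

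To build such a $J$ I would work along the component decomposition $\Varsep = \bigcup_{\sigma,i} H_{\sigma,i}$ of Proposition~\ref{PropositionIrreducibleComponentsOfVsep}, reusing the codimension bookkeeping from the proof of Theorem~\ref{TheoremVsepConnectedInCodimIfAndOnlyIfGgenerated}. Assuming in addition that $X$ is connected in codimension $l_0+1$, the chains constructed there (from the $\tau_j$ and from formula (\ref{EquationCodimensionIntersection1})) show that the components of $\Varsep$ can be listed as $C_1\upto C_N$ so that each $C_{t+1}$ meets $C_1\cup\cdots\cup C_t$ in codimension $\le l_0+1$ in $\Varsep$. One would then define $J$ recursively: take $J^{(1)}$ a Cohen-Macaulay ideal whose radical is the prime of $C_1$, and, given $J^{(t)}$ with $\sqrt{J^{(t)}}$ the radical ideal of $C_1\cup\cdots\cup C_t$ and $\cmdef(R/J^{(t)}) \le l_0$, glue in $C_{t+1}$ by putting $J^{(t+1)} := J^{(t)} \cap J_{t+1}$ for a well-chosen defining ideal $J_{t+1}$ of $C_{t+1}$ and controlling depths through the Mayer--Vietoris sequence
\[
0 \longrightarrow R/J^{(t+1)} \longrightarrow R/J^{(t)} \oplus R/J_{t+1} \longrightarrow R/(J^{(t)}+J_{t+1}) \longrightarrow 0
\]
together with the depth inequality $\depth M' \ge \min(\depth M, \depth M''+1)$ applied at every localization. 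If $R/J^{(t)}\oplus R/J_{t+1}$ has Cohen-Macaulay defect $\le l_0$ and the third term — whose support is $(C_1\cup\cdots\cup C_t)\cap C_{t+1}$, of codimension $\le l_0+1$ in $\Varsep$ — has depth $\ge n - l_0 - 1$ at every prime, the induction step gives $\cmdef(R/J^{(t+1)}) \le l_0$, and after $N-1$ steps $k \le \cmdef(R/J^{(N)}) \le l_0$.

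Two points have to be addressed. First, each $C_{t+1}$ is isomorphic to an irreducible component $X_i$ of $X$, so the above uses that $X_i$ carries a Cohen-Macaulay (or at worst low-defect) scheme structure. This holds freely when $X$ is Cohen-Macaulay, but not in general: indeed, the conjecture as literally stated already fails under the bare hypotheses of Theorem~\ref{TheoremCMDEFimpliesReflectionGroup}. For instance, let $G$ be trivial and let $X$ be the union of two $3$-dimensional linear subspaces of $K^6$ meeting only at the origin; then $l_0 = 0$, while $\Varsep$ is homeomorphic to $X$, which is connected only in codimension $3$, so by Theorem~\ref{TheoremHartshornesConnectednessWithCMDEF} every defining ideal $J$ has $\cmdef(R/J) \ge 2 > l_0$. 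Hence the conjecture should be understood with a regularity assumption that in particular makes $X$ connected in codimension $l_0+1$ and its components set-theoretically Cohen-Macaulay — for example $X$ irreducible and smooth, which is the case in the linear and multiplicative settings that motivate it.

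The second, and genuine, obstacle is the control of the scheme-theoretic intersections $R/(J^{(t)}+J_{t+1})$ in the recursion: the sum $J^{(t)}+J_{t+1}$ is forced once $J^{(t)}$ and $J_{t+1}$ are chosen, and there is no general tool that, from set-theoretic Cohen-Macaulay defining ideals of two subschemes whose intersection has codimension $\le l_0+1$, manufactures choices whose sum has depth $\ge n - l_0 - 1$ everywhere. Even in the linear case $X = K^n$, where $\Varsep$ is a union of $n$-dimensional linear subspaces of $K^{2n}$ whose intersection lattice is that of the fixed spaces of $G$, this is exactly the (open) problem of realizing a subspace arrangement with prescribed intersection lattice by a scheme of small, prescribed Cohen-Macaulay defect, e.g.\ by assigning suitable multiplicities. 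I expect any proof to rely on such a statement, the $(l_0+1)$-reflection hypothesis entering precisely to cap the codimensions that occur in this intersection lattice; turning that into an unconditional argument is the crux, which is why the statement is recorded only as a conjecture.
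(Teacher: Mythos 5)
This statement is recorded in the paper only as a conjecture, supported by computational evidence such as Example~\ref{ExampleDifferentCMDEF}; there is no proof in the paper to compare yours against, and your proposal, as you say yourself, does not close the gap either. What you do establish is correct: the inequality $l_0 \le k$ is immediate from Theorem~\ref{TheoremCMDEFimpliesReflectionGroup}, and the entire content of the conjecture is the reverse inequality, i.e., exhibiting a single ideal $J$ with $\sqrt{J}=\sqrt{\Isep}$ and $\cmdef(R/J)\le l_0$. Your Mayer--Vietoris gluing along a chain of components $H_{\sigma,i}$ ordered so that consecutive pieces meet in codimension $\le l_0+1$ is a sensible template, but, as you concede, the step that controls the depth of $R/(J^{(t)}+J_{t+1})$ at every prime is exactly where no general tool is available; the sum of the two defining ideals is forced by the earlier choices, and nothing guarantees it can be kept of small Cohen--Macaulay defect. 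So the proposal is a program, not a proof, and should be presented as such.

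Your counterexample, on the other hand, deserves emphasis, because it is correct and shows the conjecture is false as literally stated. With $G=\{\id\}$ and $X$ the union of two $3$-dimensional linear subspaces of $K^6$ meeting only at the origin, the hypotheses of Theorem~\ref{TheoremCMDEFimpliesReflectionGroup} hold and $l_0=0$, while $\Varsep\cong X$ is connected but not connected in codimension $2$ (removing the origin, a closed subset of codimension $3$, disconnects it). Theorem~\ref{TheoremHartshornesConnectednessWithCMDEF} then forces $\cmdef(R/J)\ge 2$ for every $J$ with $\sqrt{J}=\sqrt{\Isep}$, so $k\ge 2>l_0$. The conjecture therefore has to be read with an additional regularity hypothesis on $X$ --- for instance $X$ irreducible and Cohen--Macaulay, or at least connected in codimension $l_0+1$ with components admitting set-theoretically Cohen--Macaulay defining ideals --- which is automatic in the linear and multiplicative settings that motivate it. Identifying both the necessary correction and the genuine obstruction is valuable; just be explicit that nothing beyond $l_0\le k$ has actually been proved.
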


Now we can prove the main result about separating algebras with $n$ algebraically independent generators.

\begin{thm}\label{TheoremYsep=nImpliesGeneratedByReflections} Assume that $X$ is connected and Cohen-Macaulay and that $G$ is generated by elements having a fixed point. If $\gammasep = n$, then $G$ is generated by reflections.
\end{thm}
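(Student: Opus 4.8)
The plan is to deduce Theorem~\ref{TheoremYsep=nImpliesGeneratedByReflections} from Theorem~\ref{TheoremCMDEFimpliesReflectionGroup} by showing that, under the hypothesis $\gammasep = n$, the number $k$ defined in Theorem~\ref{TheoremCMDEFimpliesReflectionGroup} is zero. Since $\gammasep = n$ means there is a separating set $S = \{g_1 \upto g_n\} \subseteq K[X]^G$ of size $n$, Proposition~\ref{PropositionSeparatingRadicalCondition} gives $\sqrt{(\delta(S))} = \sqrt{\Isep}$. So the ideal $J := (\delta(g_1) \upto \delta(g_n)) \subseteq R = K[X]\otimes_K K[X]$ has the same radical as $\Isep$, and it suffices to show $\cmdef(R/J) = 0$, i.e.\ that $R/J$ is Cohen-Macaulay.

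The key point is a dimension/codimension count combined with the Cohen-Macaulay property of $R$. First, since $X$ is Cohen-Macaulay and connected, it is equidimensional of dimension $n$ (connectedness rules out components of different dimension meeting badly — or one argues via unmixedness), and $R = K[X]\otimes_K K[X]$ is Cohen-Macaulay of dimension $2n$. By Remark~\ref{RemarkVsepEqudimensionalToo}, $\codim_{X\times X}(\Varsep) = n$, so $\sqrt{J} = \sqrt{\Isep}$ has height $n$ in $R$. But $J$ is generated by the $n$ elements $\delta(g_1) \upto \delta(g_n)$, so every minimal prime over $J$ has height at most $n$; combined with height $\geq n$, we get $\height(J) = n$ and $J$ is generated by a system of parameters on $R$ locally at each such prime — more precisely, $\delta(g_1) \upto \delta(g_n)$ form a regular sequence in the Cohen-Macaulay ring $R$ (localized appropriately), so $R/J$ is a complete intersection, hence Cohen-Macaulay, of pure dimension $n$. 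Therefore $\cmdef(R/J) = 0$, $k = 0$ in Theorem~\ref{TheoremCMDEFimpliesReflectionGroup}, and $G$ is generated by $(0+1)$-reflections, i.e.\ by reflections.

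I expect the main obstacle to be the careful verification that $\delta(g_1) \upto \delta(g_n)$ really is a regular sequence (equivalently, that $R/J$ is Cohen-Macaulay), rather than just bounding $\cmdef(R/J)$ crudely. The essential input is that in a Cohen-Macaulay ring, an ideal generated by $c$ elements of height $c$ is automatically a perfect ideal and the generators form a regular sequence; this needs $R$ Cohen-Macaulay (which is where the hypothesis on $X$ enters) and the height computation $\height(J) = n$ (which is where $\gammasep = n$, via Proposition~\ref{PropositionSeparatingRadicalCondition}, and Remark~\ref{RemarkVsepEqudimensionalToo} enter). One should be slightly careful that $\cmdef$ as defined is a supremum over all primes, so it suffices to check $R_{\mathfrak p}/J R_{\mathfrak p}$ is Cohen-Macaulay for all $\mathfrak p \supseteq J$, which follows since localization of a Cohen-Macaulay ring is Cohen-Macaulay and a regular sequence stays regular after localization. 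Once $k = 0$ is established, the conclusion is immediate from Theorem~\ref{TheoremCMDEFimpliesReflectionGroup}.
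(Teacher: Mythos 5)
Your proposal is correct and follows essentially the same route as the paper: form $J = (\delta(g_1),\ldots,\delta(g_n))$, use Proposition~\ref{PropositionSeparatingRadicalCondition} and Remark~\ref{RemarkVsepEqudimensionalToo} to get $\height(J)=n$, invoke the Cohen-Macaulay property of $K[X]\otimes_K K[X]$ (together with equidimensionality from connectedness) to conclude the generators form a regular sequence so that $\cmdef(R/J)=0$, and finish with Theorem~\ref{TheoremCMDEFimpliesReflectionGroup}. No changes needed.
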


\begin{proof}
Since $X$ is Cohen-Macaulay, it follows that $X \times X$ is Cohen-Macaulay, too (see \cite{watanabe1969tensor}). In addition, $X$ is connected, so $X$ and $X \times X$ are also equidimensional, since local Cohen-Macaulay rings are equidimensional (see \cite[Corollary 18.11]{eisenbud1995commutative}).

Now let $\{ f_1,\, \ldots ,\, f_n \}$ be a set of separating invariants. Using the $\delta$-map (defined in (\ref{EquationDefinitionOfDeltaMap})) this set defines the following ideal in $K[X] \otimes_K K[X]$:
\begin{equation}\label{EquationDefinitionOfJ} J := \left( \delta(f_1) \upto \delta(f_n) \right), \end{equation}
which has the same radical as $\Isep$ by Proposition~\ref{PropositionSeparatingRadicalCondition}. Hence we have
\begin{equation}\label{EquationHeightOfJ} \height(J) = \height(\sqrt{\Isep}) = \codim_{X \times X}(\Varsep) =  n,\end{equation}
by Remark~\ref{RemarkVsepEqudimensionalToo}.
Since $K[X] \otimes_K K[X]$ is Cohen-Macaulay, (\ref{EquationDefinitionOfJ}) and (\ref{EquationHeightOfJ}) imply that $J$ is generated by a $(K[X] \otimes_K K[X])$-regular sequence. Therefore, $(K[X] \otimes_K K[X])/J$ is Cohen-Macaulay as well (see \cite[Proposition 18.13]{eisenbud1995commutative}). Now we can use Theorem~\ref{TheoremCMDEFimpliesReflectionGroup} with $k = 0$. 
\end{proof}

Dufresne \cite{dufresne2009separating} gave an example of a representation for which the invariant ring is not a polynomial ring, but still $\gammasep$ equals $n$. This suggested that the choice of $J$ in Theorem~\ref{TheoremCMDEFimpliesReflectionGroup} matters. The following example illustrates this point as it results in various Cohen-Macaulay defects. 
It is taken from the database of invariant rings of Kemper et al. \cite{kemper2001database}.

\begin{example}\cite[ID 10253]{kemper2001database}\label{ExampleDifferentCMDEF}
Let $\characteristic(K) = 2$. We look at the following subgroup, isomorphic to $(\Z/2\Z)^3$, of $\GL_4(K)$: 
\[ G := \{ \begin{pmatrix}1&0&0&0\\b&1&c&0\\0&0&1&0\\c&0&a&1\end{pmatrix} \mid a,\,b,\,c \in \F_2 \} \subseteq \GL_4(K).\]
Its natural action on $V = K^4$ is generated by reflections.
Using the computer algebra system \textsc{magma} \cite{bosma1997magma}, we have computed the primary invariants
\begin{eqnarray*}
f_1 &:=& x_1,\\
f_2 &:=& x_3,\\
f_3 &:=& x_1^2 x_3 x_4 + x_1^2 x_4^2 + x_1 x_3^2 x_4 + x_1 x_3 x_4^2 + x_3^2 x_4^2 + x_4^4,\\
f_4 &:=& x_1^3 x_2 + x_1 x_2 x_3^2 + x_1 x_3^2 x_4 + x_1 x_3 x_4^2 + x_2^4 + x_2^2 x_3^2 + x_3^3 x_4 + x_3^2 x_4^2,
\end{eqnarray*}
and a secondary invariant
\[h  := x_1^2 x_2 + x_1 x_2^2 + x_3^2 x_4 + x_3 x_4^2.\]
Hence, the invariant ring 
\[ K[V]^G = K[x_1,\,x_2,\,x_3,\,x_4]^G = K[f_1,\,f_2,\,f_3,\,f_4,\,h] \]
is not a polynomial ring. Between the generating invariants there is the relation
\[ f_1^3 h + f_1^2 f_3 + f_1 f_2^2 h + f_2^2 f_4 + h^2 = 0.\]
So by defining $g_3 := f_1 h + f_3$ and $g_4 := f_1 h + f_4$, we get $$h^2 = f_1^2 g_3 + f_2^2 g_4.$$ Since $\characteristic(K) = 2$, we see from this relation that the values of $f_1,\, f_2, \,g_3,\, g_4$ at a point $x \in K^4$ determine $h(x)$. From the definition of $g_3$ and $g_4$ it is clear that the values of $f_3$ and $f_4$ at $x$ are also determined by this. Hence $S := \{ f_1,\,f_2,\,g_3,\,g_4 \}$ is separating.
In this example $\Isep$ is not a radical ideal. Let $J$ be the ideal in $R := K[V] \otimes_K K[V]$ generated by $\delta(S)$. Using the graded version of the Auslander-Buchsbaum formula, we calculated the following Cohen-Macaulay defects with \textsc{magma}:
\[ \cmdef(R/\Isep) = 2, \quad \cmdef(R/\sqrt{\Isep}) = 1, \quad \cmdef(R/J) = 0.\]
Of course, $\cmdef(R/J) = 0$ is not surprising, as it was used in Theorem~\ref{TheoremYsep=nImpliesGeneratedByReflections}.\hfill$\triangleleft$
\end{example}

Let us look at the assumptions in Theorem~\ref{TheoremYsep=nImpliesGeneratedByReflections} more closely. Of course, any example of a (non-trivial) free group action of $G$ on $X$ with an invariant ring isomorphic to a polynomial ring shows that the assumption that $G$ has fixed points cannot be dropped from Theorem~\ref{TheoremYsep=nImpliesGeneratedByReflections}.

\begin{example}
Let $\characteristic(K) = p > 0$, and let $G = \Z / p \Z$ be the cyclic group of order $p$. When we look at the additive action of $\Z / p \Z$ on $V = K$ via $(\sigma,\, x) \mapsto \sigma + x$, we see that 
\[K[V]^G = K[x]^G = K[x^p - x] \]
is a polynomial ring. But a non-zero group element $\sigma \in \F_p$ does not have a fixed point, so in particular, $G$ is not a reflection group.\hfill$\triangleleft$
\end{example}

The next example shows that the assumption that $X$ is Cohen-Macaulay cannot be dropped from Theorem~\ref{TheoremYsep=nImpliesGeneratedByReflections}.

\begin{example}\label{ExampleTwoAffinePlanesIntersectingInAPoint}
Let $\characteristic(K) \neq 2$ and consider the affine variety
\[X := \Var (x_1^2 - x_3^2, \, x_2^2 - x_4^2,\, x_1x_2 - x_3x_4, \, x_1x_4 - x_2x_3) \subseteq K^4, \]
which is the union of two planes intersecting at the origin:
\[X = \Var (x_1 - x_3, \, x_2 - x_4 ) \cup \Var (x_1 + x_3, \, x_2 + x_4 ).\]
Hartshorne's connectedness theorem in the form of Theorem~\ref{TheoremHartshornesConnectednessWithCMDEF} now tells us that $X$ is not Cohen-Macaulay at the intersection point, since it is not connected in codimension 1 there.

A cyclic group $G = \langle \sigma \rangle$ of order 2 acts (on $V = K^4$ and) on $X$ by
\begin{equation*} \sigma \cdot (x_1,\,x_2,\,x_3,\,x_4) := (x_1,\,x_2,\,-x_3,\,-x_4).\end{equation*}
The action of $G$ on $X$ interchanges the two planes while fixing the origin. So the generator of $G$ is a 2-reflection on $X$.

The invariant ring of the representation $V = K^4$ of $G$ can be easily seen to be
\[ K[V]^G = K[x_1,\,x_2,\,x_3^2,\, x_3x_4, \, x_4^2].\]
Since we are in a non-modular case, the finite group $G$ is linearly reductive. Therefore, $K[X]^G$ is the quotient ring of $K[V]^G$ modulo the vanishing ideal of $X$.
We get 
\[ K[X]^G = K[\overline{x}_1,\,\overline{x}_2].\] 
So the invariant ring of the action on $X$ is a polynomial ring, but in contrast to Theorem~\ref{TheoremYsep=nImpliesGeneratedByReflections}, $G$ is not generated by reflections.\hfill$\triangleleft$
\end{example}

As outlined in the introduction, we finish this section on polynomial separating algebras with an application to multiplicative invariant theory. So now let $L$ be a lattice of rank $n$ with an action of $G$ on $L$ by automorphisms, and let $K[L]$ be the group ring of $L$ over $K$ which carries an induced action of $G$ by $K$-algebra automorphisms. Since \[ K[L] \cong K[x_1^{\pm 1}, \,\ldots, \, x_n^{\pm 1}]\] is a Laurent polynomial ring over $K$ in $n$ indeterminates, the corresponding $G$-variety is
\begin{equation}\label{multInvTheoDefOfVariety} X := \left(\G_m\right)^n = \mathcal{V}(x_1 y_1 - 1, \,\ldots, \, x_n  y_n -1 ) \subseteq K^{2n},\end{equation}
i.e., $X$ is an $n$-dimensional algebraic torus.

In multiplicative invariant theory an element $\sigma \in G$ is called a $k$-reflection if the sublattice $\{ \sigma l - l \mid l \in L\}$ has rank at most $k$ (see \cite[Section 1.7]{lorenz2005multiplicative}). But by \cite[Lemma 4.5.1]{lorenz2005multiplicative}, this is equivalent to the condition that the ideal $(\sigma f - f \mid f \in K[L])$ in $K[L] = K[X]$ has height at most $k$, so that a reflection on $L$ is exactly a reflection on $X$ (see Remark~\ref{RemarkKreflectionAndHeightOfCorrespondingIdeal}).

Now we make the following contribution to the semigroup problem in multiplicative invariant theory.

\begin{thm}\label{TheoremMixedLaurentPolynomialRingInMIT}
Let $L$ be a lattice and let $G$ be a finite group acting on $L$ by automorphisms. If $K[L]^G$ is isomorphic to a mixed Laurent polynomial ring, then $G$ is generated by reflections.
\end{thm}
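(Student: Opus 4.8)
The plan is to derive Theorem~\ref{TheoremMixedLaurentPolynomialRingInMIT} from Theorem~\ref{TheoremYsep=nImpliesGeneratedByReflections}, applied to the $G$-variety $X = \G_m^n$ associated with the lattice $L$ as in~(\ref{multInvTheoDefOfVariety}), where $n$ is the rank of $L$ and $K[X] = K[L]$. So I need to verify the three hypotheses of that theorem ($X$ connected, $X$ Cohen--Macaulay, and $G$ generated by elements with a fixed point in $X$), to show that $\gammasep = n$, and finally to translate the conclusion that $G$ is generated by reflections on $X$ into the corresponding statement about $L$.

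The first three hypotheses are immediate. The torus $X = \G_m^n$ is irreducible, hence connected, and $K[L] \cong K[x_1^{\pm 1} \upto x_n^{\pm 1}]$ is a localisation of a polynomial ring, hence regular and in particular Cohen--Macaulay. Furthermore the identity element $e = (1 \upto 1)$ of the algebraic group $\G_m^n$ --- which corresponds to the augmentation $\varepsilon \colon K[L] \to K$, $x^l \mapsto 1$ --- is fixed by every $\sigma \in G$, because the induced action sends $x^l$ to $x^{\sigma l}$ and $\varepsilon(x^{\sigma l}) = 1 = \varepsilon(x^l)$. Hence $X^G \neq \emptyset$, so in particular $G$ is trivially generated by elements having a fixed point in $X$.

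The step that requires an argument is $\gammasep = n$. Since $\gammasep \geq n = \dim X$ always holds for finite group actions (a separating set generates a subalgebra of Krull dimension $n$), it suffices to produce a separating set of size $n$. Fix an isomorphism $\phi \colon K[x_1^{\pm 1} \upto x_k^{\pm 1}, x_{k+1} \upto x_n] \xrightarrow{\ \sim\ } K[L]^G$ given by the hypothesis, and set $g_i := \phi(x_i) \in K[L]^G$ for $i = 1 \upto n$; I claim $S = \{g_1 \upto g_n\}$ is separating. The key point is that $g_1 \upto g_k$ are units of $K[L]^G$, hence units of $K[L]$, hence nowhere vanishing on $X$. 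Writing an arbitrary $f \in K[L]^G$ as $\phi(p)$ with $p$ a Laurent polynomial in $x_1 \upto x_k$ that is a genuine polynomial in $x_{k+1} \upto x_n$, the value $f(x)$ at any point $x \in X$ is then one fixed Laurent-polynomial expression in $g_1(x) \upto g_n(x)$, which is well defined because $g_1(x) \upto g_k(x) \neq 0$. Consequently $g_i(x) = g_i(y)$ for all $i$ forces $f(x) = f(y)$ for all $f \in K[L]^G$, which is exactly the separating condition for $S$. Therefore $\gammasep \leq n$, and so $\gammasep = n$.

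Now Theorem~\ref{TheoremYsep=nImpliesGeneratedByReflections} applies and shows that $G$ is generated by reflections on $X$; by \cite[Lemma 4.5.1]{lorenz2005multiplicative} together with Remark~\ref{RemarkKreflectionAndHeightOfCorrespondingIdeal} (as recalled just before the statement), being a reflection on $X$ is equivalent to being a reflection on $L$, and hence $G$ is generated by reflections on $L$, as asserted. I expect the only delicate point to be the construction of the separating set: $K[L]^G$ needs $n + k$ algebra generators in general, yet only the $n$ \emph{positive} ones are needed to separate points, the reason being that the remaining $k$ generators are reciprocals of units already present in $S$. Everything else is a routine check of hypotheses, plus the standard bound $\gammasep \geq n$.
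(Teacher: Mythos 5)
Your proposal is correct and follows essentially the same route as the paper: verify the hypotheses of Theorem~\ref{TheoremYsep=nImpliesGeneratedByReflections} for the torus $X=\G_m^n$, observe that the $n$ generators $f_1\upto f_n$ (without the inverses of the first $k$) already form a separating set because the units are nowhere vanishing, and translate reflections on $X$ back to reflections on $L$. The paper merely states the key separating-set step as "easily extracted," whereas you spell out the justification; the content is the same.
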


\begin{proof}
Since $\G_m$ is a connected linear algebraic group, the affine variety $X$ in (\ref{multInvTheoDefOfVariety}) is irreducible and non-singular (hence Cohen-Macaulay). Furthermore, every $\sigma \in G$ fixes the point $(1 \upto 1) \in X$. Hence the prerequisites of Theorem~\ref{TheoremYsep=nImpliesGeneratedByReflections} are satisfied. By assumption, we have 
\[ K[X]^G = K[L]^G = K[f_1^{\pm 1}, \, \ldots, \, f_k^{\pm 1}, \, f_{k+1}, \, \ldots, f_n] \]
with invariants $f_i \in K[X]$ (where obviously $n = \trdeg_K(K[X]^G) = \dim(X))$. But from this generating set of invariants we easily extract the smaller separating set
\[ S = \{ f_1, \ldots, f_n \},\]
since the inverses of $f_1 \upto f_k$ are not needed to separate the orbits. So we have $\gammasep = n$ and the result follows with Theorem~\ref{TheoremYsep=nImpliesGeneratedByReflections}. 
\end{proof}

%%%%%%%%%%%%%%%%%%%%%%%%%%%%%%%%%%%%%%%%%%%%%%%%%%%%%%%%%%%%%%%%%%%%%%
\section{Complete Intersection Separating Algebras}\label{SectionCISepAlgebras}
%%%%%%%%%%%%%%%%%%%%%%%%%%%%%%%%%%%%%%%%%%%%%%%%%%%%%%%%%%%%%%%%%%%%%%

Vinberg's lemma\index{Vinberg's lemma} \cite[Lemma 2]{kac1982finite} is one ingredient to the proof of Kac-Watanabe's theorem about complete intersection invariant rings. Roughly speaking, it states that if a finite group acts on a sufficiently nice topological space such that the quotient is simply connected, then the group must be generated by elements having a fixed point. A version for the Euclidean topology of complex algebraic varieties appears in \cite[Section 8.3]{popov1994invariant}. A version for schemes which is designed for a generalization of Dufresne's and Kac-Watanabe's results will be given below, in Lemma~\ref{LemmaVinberg}, after some preliminary remarks.

When we look at the action of a group $G$ on a scheme $\X$ by morphisms, an element $x \in \X$ should be considered a fixed point of a group element $\sigma \in G$ if and only if $\sigma x = x$ and $\sigma$ acts as identity on the residue field $\kappa(x)$ of $x$. If $\X$ is a \emph{separated} scheme, then the set of fixed points $\X^{\sigma}$ (in the above sense) of $\sigma \in G$ is always a closed subscheme of $\X$ (see \cite[Chapter 9]{gortz2010algebraic}). For example if $\X = \Spec(R)$ is an affine scheme, then 
\[ \Spec(R)^{\sigma} = \{ \pprime \in \Spec(R) \mid \sigma f - f \in \pprime \text{ for all } f \in R \}.\]
So Definition \ref{DefinitionKreflectionOnVariety} about $k$-reflections carries over to actions on separated schemes, and, for the affine $G$-variety $X$, Remark~\ref{RemarkKreflectionAndHeightOfCorrespondingIdeal} shows that an element $\sigma \in G$ is a $k$-reflection on $X$ if and only if it is a $k$-reflection on the scheme $\X = \Spec(K[X])$.

We also need some general facts about the quotient of a scheme $\X$ by a finite group $G$. Following \cite{fu2011etale} we will call the action \emph{admissible} if there exists an affine $G$-invariant morphism $\pi : \X \to Y$ such that $\mathcal{O}_{Y} \cong (\pi_{\ast} \mathcal{O}_{\X})^G$. Then $Y$ is not only the categorical but also the geometric quotient of $\X$ by $G$ (see \cite[p. 119]{fu2011etale}). Moreover, every open subset $V \subseteq Y$ is the quotient of $\pi^{-1} (V) \subseteq \X$.

Finally, we recall what it means for a %(connected) 
scheme $Z$ to be simply connected. An \emph{\'etale covering} of $Z$ is a scheme $Z'$ together with a finite \'etale morphism $f: Z' \to Z$. It is called trivial if $Z'$ is a finite disjoint union of open subschemes which are all isomorphic to $Z$ via $f$. And $Z$ is called \emph{simply connected} if every \'etale covering of $Z$ is trivial (see \cite[Chapter 4, Section 2.2]{danilov1996cohomology}).

\begin{lem}\label{LemmaVinberg}
Let $\X$ be a separated scheme, connected in codimension $k$, on which a finite group $G$ acts admissibly. Let $\pi : \X \to Y$ be the quotient and suppose that $\X$ is of finite presentation over $Y$. Furthermore, assume that $Y$ has the following property:
\begin{align}\label{PropertySimplyConnectedInCodimensionk}
\text{for all closed subsets $Z$ of $Y$ with $\codim_Y(Z) > k$}\\ \notag \text{the space $Y \setminus Z$ is simply connected.}
\end{align}
Then $G$ is generated by $k$-reflections on $\X$.
\end{lem}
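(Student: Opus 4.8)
The plan is to argue by contrapositive in the spirit of the classical Vinberg argument: if $G$ is *not* generated by $k$-reflections, we produce a nontrivial étale covering of $Y$ minus a small closed set, contradicting the simple-connectedness hypothesis (\ref{PropertySimplyConnectedInCodimensionk}). So suppose $H \trianglelefteq G$ is the (normal) subgroup generated by all $k$-reflections on $\X$, and assume $H \neq G$.

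First I would pass to the intermediate quotient $\X \to \X/H =: Y'$, which exists and is again admissible (quotients by finite groups compose), and observe that $G/H$ acts on $Y'$ with quotient $Y$. The key point is that $G/H$ acts *freely in codimension $\le k$* on $Y'$: by construction every nonidentity element $\sigma H$ of $G/H$ has the property that no lift $\sigma$ fixes a subscheme of codimension $\le k$ in $\X$ (otherwise $\sigma$ would be a $k$-reflection, hence in $H$), and one translates this via $\pi$ into a statement about the fixed loci of $G/H$ on $Y'$ — here one uses that $\pi$ is finite and surjective so codimensions of images of fixed loci are controlled. Removing from $Y'$ the union $Z'$ of all these fixed loci (a closed set of codimension $> k$) and setting $Z := \pi(Z')$ downstairs, the map $Y' \setminus Z' \to Y \setminus Z$ becomes a free action of $G/H$ with quotient $Y\setminus Z$, hence a finite étale Galois covering with group $G/H \neq 1$.

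To finish I would need this covering to be *nontrivial*: a trivial covering would force $Y'\setminus Z'$ to be a disjoint union of copies of $Y\setminus Z$, but $Y' \setminus Z'$ should be connected (or at least have too few components) because $\X$, being connected in codimension $k$, stays connected after deleting the preimage of $Z'$ — a codimension-$>k$ closed set — and $Y' \setminus Z'$ is a quotient of that. This contradicts (\ref{PropertySimplyConnectedInCodimensionk}) applied to $Z$, since $\codim_Y(Z) > k$. Therefore $H = G$.

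The main obstacle is the étale-ness and finite-presentation bookkeeping: one must verify that after removing the fixed loci the induced morphism $Y'\setminus Z' \to Y\setminus Z$ really is finite étale, which is where the hypotheses that $\X$ is of finite presentation over $Y$, that the action is admissible, and that $\X$ is separated (so that each $\X^\sigma$, and hence each fixed locus, is genuinely closed) all get used; one also has to be careful that "free action" in the scheme-theoretic sense (trivial stabilizers *including* no nontrivial action on residue fields, as stipulated in the definition of fixed point preceding the lemma) is what comes out of deleting the $\X^\sigma$. The codimension comparison between fixed loci on $\X$ and their images on $Y'$ under the finite map $\pi$ is the second delicate point, and is presumably where a purity-type input or the admissibility of the quotient is invoked.
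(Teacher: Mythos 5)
Your overall strategy coincides with the paper's: form the subgroup $H$ generated by the $k$-reflections, delete a $G$-stable closed subset of codimension $>k$ so that what remains of $\X$ is connected, and play an \'etale covering of the (simply connected) punctured quotient against that connectedness. The paper runs this directly (it shows $\widetilde{\X}/H\to\widetilde Y$ is \'etale, hence an isomorphism) rather than contrapositively, but that difference is cosmetic.

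The genuine gap is in the claim that $G/H$ acts ``freely in codimension $\le k$'' on $Y'=\X/H$ and in what you deduce from it. The fixed locus of a class $\sigma H$ on $\X/H$ is the image of $\bigcup_{\tau\in H}\{x\mid \tau\sigma x=x\}$, and two things obstruct bounding its codimension by that of the loci $\X^{\tau\sigma}$. First, the sets $\{x\mid \tau\sigma x=x\}$ are only set-theoretic fixed loci, whereas ``$\tau\sigma$ is not a $k$-reflection'' bounds the scheme-theoretic fixed locus $\X^{\tau\sigma}$ (which additionally requires triviality on residue fields); for the schemes to which the lemma is actually applied (spectra of complete local rings, with non-closed points whose residue fields admit nontrivial automorphisms) these can differ. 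Second, and more seriously, nothing in your argument rules out that some $\sigma\in G\setminus H$ induces the identity on all of $\X/H$, in which case your $Z'$ is all of $Y'$ and the construction collapses; equivalently, the inference ``Galois covering with group $G/H\neq 1$, hence nontrivial'' presupposes that $G/H$ acts faithfully with fibers of full size $|G/H|$, which is precisely what must be proved. This is exactly what the paper's final paragraph supplies: having shown that the quotients of $\widetilde\X$ by $H$ and by $G$ coincide, it uses the surjectivity of both stabilizers $G_{\widetilde x}$ and $H_{\widetilde x}$ onto $\Aut\bigl(\kappa(\widetilde x)/\kappa(\widetilde\pi(\widetilde x))\bigr)$ to write any $\sigma\in G$ as a product of elements of $H$ and an element of an inertia group $I_{\widetilde x}\subseteq H$. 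Some such decomposition-group/inertia-group argument is needed to close your proof; the ``obstacles'' you do flag (\'etaleness after deleting the fixed loci, codimension control under the finite quotient map) are real but are disposed of by citations in the paper, whereas this last point is not addressed in your proposal at all.
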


Due to the similarity to (\ref{FirstVersionOfConnectednessInCodim}), we will refer to (\ref{PropertySimplyConnectedInCodimensionk}) as \emph{simply connected in codimension $k$}, although this is not a standard term. In the case $k = 1$, Popov and Vinberg call this property strongly simply connected (see \cite[Proposition 8.3]{popov1994invariant}).

\begin{proof}
Since $\X$ is separated, the finite union  
\[ L := \bigcup\limits_{\substack{\sigma \in G\\ \codim_{\X}(\X^{\sigma}) > k}} \X^{\sigma} \]
is a closed subset of $\X$. In addition, $L$ is $G$-stable (since for all $\tau, \, \sigma \in G$ we have $\tau \X^{\sigma} = \X^{\tau \sigma \tau^{-1}}$), and has $\codim_{\X}(L) > k$. So by assumption, $\widetilde{\X} := \X \setminus L$ is connected. 

As $\pi$ is integral (see \cite[Proposition 3.1.1]{fu2011etale}), $\pi(L)$ is closed and $\codim_Y(\pi(L)) > k$ (cf. \cite[Proposition 12.12]{gortz2010algebraic}). So by assumption (\ref{PropertySimplyConnectedInCodimensionk}), $\widetilde{Y} := Y \setminus \pi(L)$ is simply connected. 

Now $G$ acts on $\widetilde{\X} := \X \setminus L$ with quotient morphism $\widetilde{\pi}: \widetilde{\X} \to \widetilde{Y}$. An element $\sigma \in G$ is a $k$-reflection on $\X$ if and only if it has a fixed point in $\widetilde{\X}$. So we have to show that $G$ is equal to the subgroup $H := \langle \sigma \in G \mid \widetilde{\X}^{\sigma} \neq \emptyset \rangle$.

Since $H$ contains the inertia subgroups $I_{\widetilde{x}} := \{ \sigma \in G_{\widetilde{x}} \mid \sigma = \id \text{ on } \kappa(\widetilde{x})\}$ of all points $\widetilde{x} \in \widetilde{\X}$, the induced morphism
\[ \varphi: \widetilde{\X} / H \to \widetilde{Y} \]
is \'etale by \cite[Satz 4.2.1]{kurke1975henselsche} or \cite[Lemma 4.11]{reimers2016dissertation}. As $\widetilde{Y}$ is simply connected, $\widetilde{\X}/H$ is therefore isomorphic to a disjoint union of finitely many copies of $\widetilde{Y}$. But $\widetilde{\X}$ and therefore $\widetilde{\X}/H$ are connected, so there is only one copy, and $\varphi$ is an isomorphism. This shows that the quotients of $\widetilde{\X}$ by $G$ and by $H$ are the same. 

Now let $\sigma \in G$. To show that $\sigma$ lies in $H$, take any point $\widetilde{x} \in \widetilde{\X}$. Since the $G$-orbit of this point is the same as the $H$-orbit, there exists  $\tau \in H$ with  $\sigma \tau \in G_{\widetilde{x}}$. For the quotient $\widetilde{\pi}$ of a scheme by a finite group, it is a general fact that the canonical homomorphism from the stabilizer of $\widetilde{x}$ to the automorphism group of the field extension $\kappa(\widetilde{\pi}(\widetilde{x})) \subseteq \kappa(\widetilde{x})$ is surjective (see part (iii) of \cite[Proposition 3.1.1]{fu2011etale}). And this holds now for both the stabilizer in $G$ and in $H$.
Thus for $\sigma \tau \in G_{\widetilde{x}}$ there exists $\mu \in H_{\widetilde{x}}$ such that $\sigma \tau \mu$ is mapped to the identity element of the Galois group (i.e., lies in the inertia subgroup of $\widetilde{x}$). So $\sigma \tau \mu \in I_{\widetilde{x}} \subseteq H$, and hence $\sigma \in H$ follows.
\end{proof}

The next step is to derive property (3.1) from purity theorems. First recall that every scheme morphism $h: Y_1  \to Y_2$ induces a functor $h^{\ast}$ from the category of \'etale coverings of $Y_2$ to the category of \'etale coverings of $Y_1$ by pulling back: 
\begin{equation}\label{EquationDefiningFunctorPullBack} g: V \xrightarrow{\text{et}} Y_2 \quad \Longrightarrow \quad h^{\ast}(g):  V \times_{Y_2} Y_1 \xrightarrow{\text{et}} Y_1.\end{equation}

\begin{remark}\label{RemarkEquivalenceOfCategoriesAndSimplyConnected}
Suppose that $h^{\ast}$ is an equivalence of categories, and that $Y_2$ is simply connected. Since disjoint union and fiber product commute, it follows then that $Y_1$ is simply connected, too.
\end{remark}

A pair of a scheme $Y$ and a closed subscheme $Z$ is now called \emph{pure} (see \cite[X, D\'efinition 3.1]{grothendieck1962cohomologie}) if for all open subschemes $U$ of $Y$ the functor
$i^{\ast}$ induced by the inclusion $i: (Y \setminus Z) \cap U \hookrightarrow U$ is an equivalence of categories. Moreover, a Noetherian local ring $(R,\, \mmax)$ is called pure if the pair $(\Spec(R),\, \{\mmax\})$ is pure.

\begin{remark}\label{RemarkPurityImpliesSimplyConnected}
Suppose that $(Y,\,Z)$ is pure, and that $Y$ is simply connected. It follows then by Remark~\ref{RemarkEquivalenceOfCategoriesAndSimplyConnected} that $Y \setminus Z$ is simply connected, too.
\end{remark}

Grothendieck \cite{grothendieck1962cohomologie} proved that local complete intersection rings of dimension $\geq 3$ are pure. This was extended by Cutkosky \cite{cutkosky1995purity} to a larger class of rings. The property ''complete intersection'' is generalized in two ways for this. 

First there is the notion of the \emph{complete intersection defect} $\cid(R)$ of a Noetherian local ring $R$. This can be intrinsically defined via the Koszul complex of $R$. But for us it is enough to restrict ourselves to situations where $R = S/I$ is the quotient of a regular local ring $S$ by an ideal $I$: Then $\cid(R)$ equals the minimal number of generators of $I$ minus $\height_S(I)$ (see \cite[Satz 1]{kiehl1965vollstandige}).

Secondly, recall that a Noetherian ring is called a \emph{complete intersection in codimension $k$} if for all prime ideals $\pprime \in \Spec(R)$ with $\height_R(\pprime) \leq k$ the localization $R_{\pprime}$ is a complete intersection.

Both these generalizations of the complete intersection property appear in combination in the following theorem. The purity theorem of Cutkosky \cite{cutkosky1995purity} is the main ingredient for the proof.

\begin{thm}\label{TheoremCIisConnectedInCodim2}
Let $R$ be a Noetherian local ring. Assume that $Y = \Spec(R)$ is simply connected, and that $R$ is excellent, a quotient of a regular local ring, equidimensional, and a complete intersection in codimension $2 + \cidef(R)$.
Then $Y$ is simply connected in codimension $2$.
\end{thm}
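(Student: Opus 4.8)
The plan is to verify property (\ref{PropertySimplyConnectedInCodimensionk}) with $k = 2$ directly, i.e. to show that for every closed subset $Z \subseteq Y$ with $\codim_Y(Z) > 2$ the open complement $Y \setminus Z$ is simply connected. Fix such a $Z$. The natural strategy is to interpolate between $Y$ and $Y \setminus Z$ along a chain of open subsets obtained by removing the irreducible components of $Z$ one at a time, ordered so that we peel off higher-codimension pieces first, and to apply a purity statement at each stage. So I would reduce to the following local claim: if $U \subseteq Y$ is open and $W \subseteq U$ is irreducible closed of codimension $c \geq 3$, and $U$ is simply connected, then $U \setminus W$ is simply connected. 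By Remark~\ref{RemarkPurityImpliesSimplyConnected} it suffices to know that the pair $(U, W)$ is pure; and since purity of a pair $(Y,Z)$ is, by \cite[X, D\'efinition 3.1]{grothendieck1962cohomologie}, a condition that can be checked after further localizing, it is enough to show that the local ring $R_{\pprime}$ is pure for every prime $\pprime$ with $\height_R(\pprime) \geq 3$ lying in (the closure of) such a component — equivalently, for every $\pprime$ of height $\geq 3$ in $\Spec(R)$.

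This is exactly where Cutkosky's purity theorem enters. The hypotheses on $R$ are chosen to pass to the localization $R_{\pprime}$ with $\height_R(\pprime) \geq 3$: excellence localizes, being a quotient of a regular local ring localizes (localize the regular ring), and — using equidimensionality and catenarity of the excellent quotient of a regular local ring — we have $\dim(R_{\pprime}) = \dim(R) - \dim(R/\pprime) \geq 3$ together with $R_{\pprime}$ equidimensional. Moreover $\cidef(R_{\pprime}) \leq \cidef(R)$, and by hypothesis $R$ is a complete intersection in codimension $2 + \cidef(R) \geq 2 + \cidef(R_{\pprime})$, so $R_{\pprime}$ is a complete intersection in codimension $2 + \cidef(R_{\pprime})$ as well. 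Thus $R_{\pprime}$ satisfies the hypotheses of Cutkosky's theorem \cite{cutkosky1995purity}, which yields that $R_{\pprime}$ is pure. Hence every pair $(\Spec(R), Z)$ with $\codim_Y(Z) > 2$ is pure, and since $Y = \Spec(R)$ is simply connected by assumption, Remark~\ref{RemarkPurityImpliesSimplyConnected} gives that $Y \setminus Z$ is simply connected. This is precisely (\ref{PropertySimplyConnectedInCodimensionk}) for $k = 2$, i.e. $Y$ is simply connected in codimension $2$.

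The main obstacle, and the point that needs the most care, is the bookkeeping that reduces the global statement ``$(\Spec(R),Z)$ is pure for all $Z$ of codimension $> 2$'' to the local statement ``$R_{\pprime}$ is pure for all $\pprime$ of height $\geq 3$'': one must check that the defining condition of a pure pair is genuinely local on $U$ in the sense needed, and that all the ring-theoretic hypotheses (excellence, being a regular quotient, equidimensionality, the codimension of the complete-intersection locus) really do descend to $R_{\pprime}$ — the equidimensionality of $R_{\pprime}$ in particular uses that an excellent quotient of a regular local ring is catenary and that equidimensionality is inherited under localization at such rings. Once these localization lemmas are in place, the rest is a direct invocation of Cutkosky's purity theorem together with Remark~\ref{RemarkPurityImpliesSimplyConnected}.
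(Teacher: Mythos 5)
Your proposal is correct and follows essentially the same route as the paper: reduce purity of the pair $(Y,Z)$ to purity of the local rings $R_{\pprime}$ at the primes $\pprime \in Z$ (the paper cites \cite[X, Proposition 3.3]{grothendieck1962cohomologie} for exactly this local criterion), check that excellence, being a quotient of a regular local ring, equidimensionality, and the complete-intersection-in-codimension hypothesis all pass to $R_{\pprime}$ with $\dim(R_{\pprime}) \geq 3$, and then invoke Cutkosky's purity theorem together with Remark~\ref{RemarkPurityImpliesSimplyConnected}. The preliminary chain of open sets peeling off components of $Z$ is superfluous --- as your own final paragraph shows, purity of the single pair $(Y,Z)$ already gives the conclusion directly.
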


\begin{proof}
Let $Z \subseteq Y$ be a closed subscheme of codimension larger than $2$. If we can show that the pair $(Y,\,Z)$ is pure, then the result follows with Remark~\ref{RemarkPurityImpliesSimplyConnected}. For that, by \cite[X, Proposition 3.3]{grothendieck1962cohomologie}, it is to show that all local rings $R_{\pprime}$ with $\pprime \in Z$ are pure. But for such a prime ideal $\pprime$ we have
\[ \dim(R_{\pprime}) = \height_R(\pprime) = \codim_Y(\overline{\{\pprime\}}) \geq \codim_Y(Z) \geq 3.\]
The assumptions on $R$ imply that $R_{\pprime}$ is also excellent, a quotient of a regular local ring, equidimensional, and a complete intersection in codimension $2 + \cidef(R_{\pprime})$. Thus it follows by the purity theorem of Cutkosky \cite[Theorem 19]{cutkosky1995purity} that $R_{\pprime}$ is pure.
\end{proof}

For finitely generated $K$-algebras there is the following (global) complete intersection defect.

\begin{definition}\label{DefinitionCIDEFECT}
Let $A$ be an affine $K$-algebra of dimension $n$. The \emph{(global) complete intersection defect} $\cid(A)$ of $A$ \emph{(over $K$)} is the smallest number $l \in \N_0$ such that there exists an $m \in \N_0$ and a presentation $$A \cong K[x_1 \upto x_m] / (f_1 \upto f_{m-n+l})$$ with polynomials $f_i \in K[x_1 \upto x_m]$.
\end{definition}

It is easy to see that for all prime ideals $\pprime$ of the affine algebra $A$ the (local) complete intersection defect $\cid(A_{\pprime})$ is less or equal to $\cid(A)$.

\begin{prop}\label{PropositionAlgebraWithCIImpliesSpecOfComplIsSimplConnecInCodim2}
Let $A$ be an affine $K$-domain that is a complete intersection in codimension $2 + \cid(A)$, where $\cid(A)$ is the global complete intersection defect of $A$. Let $\mmax$ be a maximal ideal of $A$ and $\widehat{A}$ the $\mmax$-adic completion of $A$. Then $\Spec(\widehat{A})$ is simply connected in codimension 2.
\end{prop}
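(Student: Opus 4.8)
The plan is to reduce the statement about the completion $\widehat{A}$ to Theorem~\ref{TheoremCIisConnectedInCodim2}, whose hypotheses we must verify for the local ring $\widehat{A}$. Write $R := \widehat{A}$. First I would observe that, since $A$ is a finitely generated $K$-algebra and $\mmax$ is a maximal ideal, $R$ is a complete Noetherian local ring; in particular it is excellent, and by the Cohen structure theorem it is a quotient of a regular (complete) local ring. Since $A$ is a domain, hence equidimensional (all maximal chains of primes through $\mmax$ have length $\dim A$, as $A$ is a finitely generated domain over a field), the completion $R$ is again equidimensional: completion of an excellent local domain preserves equidimensionality (the formal fibers are geometrically regular, so in particular equidimensional, and one invokes the standard dimension formula for completions of excellent local rings). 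So the only two substantive points left are: (i) $\Spec(R)$ is simply connected, and (ii) $R$ is a complete intersection in codimension $2 + \cidef(R)$.

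For (i), the key input is that $A$ is a complete intersection in codimension $2 + \cid(A) \geq 2$, so in particular $A_\mmax$ is a complete intersection in codimension $2$; being a localization of an affine domain, $A_\mmax$ is also a normal-enough, excellent, equidimensional quotient of a regular local ring. One expects $\Spec(A_\mmax)$ — or rather $\Spec(R)$ — to be simply connected either because $A$ (being a complete intersection in low codimension, hence $R_1$ and $S_2$, hence normal) has $\Spec$ that is simply connected after suitable localization, or, more robustly, by passing through the punctured spectrum: the complete intersection hypothesis in codimension $2 + \cid(A)$ forces the local rings at primes of small height to be complete intersections of dimension $\geq 3$, which by Grothendieck's purity are pure, and one bootstraps simple connectedness of $\Spec(R)$ from that of $\Spec(R)$ minus a high-codimension closed set together with a connectedness/normality argument. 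I would most likely just cite the relevant statement — that a complete local ring which is a complete intersection in codimension $2$ and equidimensional has simply connected spectrum follows from Cutkosky's and Grothendieck's purity results applied at the closed point — or quote it from \cite{cutkosky1995purity} directly.

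For (ii): the complete intersection defect behaves well under completion and localization. We have $\cidef(R) = \cidef(A_\mmax) \leq \cid(A)$, since completion of a Noetherian local ring does not change the complete intersection defect (the minimal free resolution, equivalently the deviations $\varepsilon_i$, are unchanged under faithfully flat local base change with regular fibers) and the local defect is bounded by the global one by the remark following Definition~\ref{DefinitionCIDEFECT}. Now for any prime $\pprime$ of $R$ with $\height_R(\pprime) \leq 2 + \cidef(R)$, let $\qprime = \pprime \cap A_\mmax$ (under the faithfully flat map $A_\mmax \to R$); then $\height_{A_\mmax}(\qprime) \leq \height_R(\pprime) \leq 2 + \cidef(R) \leq 2 + \cid(A)$, so $(A_\mmax)_\qprime$ is a complete intersection by hypothesis; since $(A_\mmax)_\qprime \to R_\pprime$ is faithfully flat with regular (in fact complete-intersection, indeed regular, since a fiber of a map to a completion is a field extension-type ring) fiber, $R_\pprime$ is a complete intersection as well. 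Hence $R$ is a complete intersection in codimension $2 + \cidef(R)$.

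With excellence, the quotient-of-regular property, equidimensionality, simple connectedness of $\Spec(R)$, and the complete-intersection-in-codimension-$(2+\cidef(R))$ property all in hand, Theorem~\ref{TheoremCIisConnectedInCodim2} applies and yields that $Y = \Spec(R) = \Spec(\widehat{A})$ is simply connected in codimension $2$. The main obstacle I anticipate is point (i), establishing that $\Spec(\widehat A)$ is simply connected in the first place — the other verifications are routine transfer-of-properties-under-completion arguments, but pinning down simple connectedness of the whole (complete local) spectrum, as opposed to its punctured version, requires either a normality-plus-connectedness argument or a direct citation; I would lean on the fact that a complete intersection in codimension $\geq 1$ is normal (hence $\Spec$ is irreducible here, as $A$ is a domain and normality is inherited) and that for a complete local normal domain the spectrum is connected, combined with the purity machinery already assembled in Theorem~\ref{TheoremCIisConnectedInCodim2}'s proof, to get the base case of simple connectedness.
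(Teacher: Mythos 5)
Your overall strategy --- verify the hypotheses of Theorem~\ref{TheoremCIisConnectedInCodim2} for $R=\widehat{A}$ --- is exactly the paper's, and most of your verifications go through: excellence, the Cohen structure theorem, equidimensionality of the completion, the equality $\cid(\widehat{A})=\cid(A_{\mmax})\leq\cid(A)$, and the transfer of ``complete intersection in codimension $2+\cid$'' to the completion. (For this last point the paper proves a hands-on Lemma~\ref{LemmaCompletionIsCIinCodimToo} by tracking explicit generators of $I\widehat{S}$; your route via flat local ascent of the complete intersection property along $(A_{\mmax})_{\qprime}\to R_{\pprime}$ with regular closed fibre is a legitimate alternative, though it silently invokes Avramov's theorem on flat local homomorphisms and should be cited as such.)

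The genuine gap is your point (i), which you yourself flag as the main obstacle: why is $\Spec(\widehat{A})$ simply connected at all? The correct reason is elementary and has nothing to do with complete intersections or purity: $\widehat{A}$ is a complete, hence Henselian, local ring whose residue field is the algebraically closed field $K$, i.e.\ a strictly Henselian local ring, and the spectrum of a strictly Henselian local ring is simply connected (any finite \'etale cover splits into local factors with trivial residue field extension, each isomorphic to the base); this is the paper's citation of Milne. Your proposed substitutes do not work. Bootstrapping simple connectedness of $\Spec(R)$ from a punctured spectrum via purity runs in the wrong direction: purity transports \'etale covers from the complement of a small closed set to the whole spectrum, so it lets you deduce simple connectedness of that complement \emph{from} simple connectedness of the whole space --- which is precisely the content of Theorem~\ref{TheoremCIisConnectedInCodim2} --- not conversely. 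And the fallback claim that ``a complete intersection in codimension $\geq 1$ is normal'' is false (complete intersection local rings are Cohen-Macaulay but can fail to be regular in codimension $\leq 1$, e.g.\ a nodal hypersurface), and even normality would not by itself yield simple connectedness of a spectrum. Without the strict Henselianness observation the proof does not close.
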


\begin{proof}
We need to show that $\widehat{A}$ satisfies the assumption for $R$ in Theorem~\ref{TheoremCIisConnectedInCodim2}.

It is clear that $\widehat{A}$ is excellent and a quotient of a regular local ring. While it need not be an integral domain, it is however equidimensional by \cite[Corollary after Theorem 31.5]{matsumura1989commutative}. Moreover, Lemma~\ref{LemmaCompletionIsCIinCodimToo} below shows that $\widehat{A}$ is a complete intersection in codimension $2 + \cid(A)$. The completion $\widehat{A}$ can be viewed as the completion of the localization $A_{\mmax}$, and completion of local rings preserves the (local) complete intersection defect (see \cite[Lemma 1]{kiehl1965vollstandige}), hence
\[\cid(\widehat{A}) = \cid(A_{\mmax}) \leq \cid(A).\]

So a fortiori $\widehat{A}$ is a complete intersection in codimension $2 + \cid(\widehat{A})$.
Furthermore, $\Spec(\widehat{A})$ is simply connected as it is the spectrum of a strictly Henselian local ring (see \cite[I, Example 5.2(c)]{milne1980etale}). So the claim now follows with Theorem~\ref{TheoremCIisConnectedInCodim2}.
\end{proof}

\begin{lem}\label{LemmaCompletionIsCIinCodimToo}
Let $S$ be a regular ring and let $I \subseteq S$ be a prime ideal, such that $R := S/I$ is a complete intersection in codimension $k$. Moreover, let $\mathfrak{n}$ be a maximal ideal of $S$ with $I \subseteq \mathfrak{n}$, and let $\mmax := \nmax /I$. Then the $\mmax$-adic completion $\widehat{R}$ of $R$ is a complete intersection in codimension $k$, too.
\end{lem}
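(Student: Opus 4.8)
The plan is to reduce the statement to a purely local computation. Let $\qprime$ be a prime of $\widehat R$ with $\height_{\widehat R}(\qprime) \le k$, and let $\pprime := \qprime \cap R$ be its contraction along the faithfully flat map $R \to \widehat R$. The key point is that the fibre ring $\widehat R \otimes_R \kappa(\pprime)$ is zero-dimensional (indeed, $R \to \widehat R$ has zero-dimensional fibres away from $\mmax$, and over $\mmax$ the closed fibre is $\kappa(\mmax)$), so $\height_{\widehat R}(\qprime) = \height_R(\pprime)$ by the dimension formula for flat local homomorphisms (see \cite[Theorem~15.1]{matsumura1989commutative}). In particular $\height_R(\pprime) \le k$, so $R_\pprime$ is a complete intersection by hypothesis. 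It then remains to show that $(\widehat R)_\qprime$ is a complete intersection. Since $R_\pprime \to (\widehat R)_\qprime$ is a flat local homomorphism, the standard ascent/descent result for the complete intersection property along flat local maps (see \cite[Theorem~2.3.4 and Theorem~2.3.5]{avramov1998infinite}, or \cite[Corollary after Theorem~23.3]{matsumura1989commutative} combined with the fact that the fibre is a field) reduces this to checking that the closed fibre $(\widehat R)_\qprime \otimes_{R_\pprime} \kappa(\pprime)$ is a complete intersection local ring. But this fibre ring is a localization of the zero-dimensional ring $\widehat R \otimes_R \kappa(\pprime)$, which, being a zero-dimensional localization of $R \to \widehat R$, is in fact regular: away from $\mmax$ the completion is an isomorphism on the relevant localizations, and the closed fibre is the field $\kappa(\mmax)$. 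Hence the fibre is a field (or at worst a regular local ring), so it is a complete intersection, and we conclude that $(\widehat R)_\qprime$ is a complete intersection.

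Carrying this out, the steps in order are: (i) record that $R \to \widehat R$ is faithfully flat with geometrically regular (indeed trivial, i.e.\ either the zero ring or a field) fibres, using that $R = S/I$ is a quotient of a regular, hence excellent, ring so that its completion is geometrically regular over $R$ (see \cite[Theorem~32.2]{matsumura1989commutative}); (ii) given $\qprime \in \Spec(\widehat R)$ with $\height_{\widehat R}(\qprime) \le k$, set $\pprime := \qprime \cap R$ and use flatness plus the triviality of the fibres to get $\height_R(\pprime) = \height_{\widehat R}(\qprime) \le k$; (iii) invoke the hypothesis to conclude $R_\pprime$ is a complete intersection; (iv) use the transfer of the complete intersection property along the flat local map $R_\pprime \to (\widehat R)_\qprime$, whose closed fibre is a field by (i), to conclude $(\widehat R)_\qprime$ is a complete intersection; (v) since $\qprime$ was an arbitrary prime of height $\le k$, $\widehat R$ is a complete intersection in codimension $k$.

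The main obstacle is step (iv): one needs the precise statement that if $\varphi\colon (A,\mmax_A) \to (B,\mmax_B)$ is a flat local homomorphism of Noetherian local rings, then $B$ is a complete intersection if and only if both $A$ and the closed fibre $B/\mmax_A B$ are complete intersections. The ``only if'' direction (descent to $A$ and to the fibre) is classical; for the ``if'' direction one typically argues via the behaviour of the deviations $\varepsilon_i$ (equivalently, via André--Quillen homology or the structure of the Koszul homology), using that $\varepsilon_i(B) = \varepsilon_i(A) + \varepsilon_i(B/\mmax_A B)$ for a flat local map, so that $\varepsilon_2(B) = 0$ once $\varepsilon_2(A) = 0$ and $\varepsilon_2(B/\mmax_A B) = 0$. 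In our situation the fibre is a field, so $\varepsilon_i$ of the fibre vanishes for all $i \ge 1$, and the transfer is immediate; the only subtlety is to cite this in a form valid without extra hypotheses, which \cite{avramov1998infinite} provides. A secondary, smaller point is to make sure the fibre ring over a non-maximal prime really is as claimed: this follows because for $\pprime \neq \mmax$ the induced map on completions $\widehat{R_\pprime} = (\widehat R)_{\pprime\widehat R}$-type statements show the fibre $\widehat R \otimes_R \kappa(\pprime)$ is a finite product of fields (being zero-dimensional and reduced, as $R$ is excellent), which is all that is needed.
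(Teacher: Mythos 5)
Your route is genuinely different from the paper's, and its skeleton --- contract a prime $\qprime \subseteq \widehat{R}$ of height $\le k$ to $\pprime := \qprime \cap R$, apply the hypothesis to $R_{\pprime}$, and ascend along the flat local map $R_{\pprime} \to \widehat{R}_{\qprime}$ via the flat-local transfer theorem for the complete intersection property --- is workable. But two of your supporting claims are false, and one of them is load-bearing. First, the formal fibres of $R \to \widehat{R}$ are \emph{not} zero-dimensional away from $\mmax$: for a $d$-dimensional local domain essentially of finite type over a field, the generic formal fibre has dimension $d-1$ (Matsumura), so your asserted equality $\height_{\widehat{R}}(\qprime) = \height_R(\pprime)$ fails in general. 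This particular error is harmless, because all you need is the inequality $\height_R(\pprime) \le \height_{\widehat{R}}(\qprime)$, which follows from flatness alone (going-down, or the dimension formula for flat local maps). Second, ``regular, hence excellent'' is wrong: a regular (even regular local) ring need not be a G-ring, i.e.\ need not have geometrically regular formal fibres. Your step (iv) genuinely requires the closed fibre of $R_{\pprime} \to \widehat{R}_{\qprime}$, which is a localization of the formal fibre $\widehat{R} \otimes_R \kappa(\pprime)$, to be a complete intersection; without excellence of $S$ (or at least complete intersection formal fibres of $S_{\nmax}$) this can fail. It does hold in the paper's application, where $S$ is a polynomial ring over a field, but it does not follow from the lemma's stated hypothesis that $S$ is regular, so as written your argument proves the lemma only under an added excellence assumption.

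The paper's own proof avoids both issues and is considerably more elementary: it writes $\widehat{R} = \widehat{S}/I\widehat{S}$ with $\widehat{S}$ the (still regular) $\nmax$-adic completion of $S$, contracts $\qprime$ to $\pprime \subseteq S$, uses going-down only to obtain the inequality $\height_R(\pprime/I) \le k$, chooses generators $a_1 \upto a_l$ of $I_{\pprime}$ with $l = \height_{S_{\pprime}}(I_{\pprime})$, and then verifies by hand that their images generate $(I\widehat{S})_{\qprime}$ and that this ideal still has height $l$ (again by going-down for $S \to \widehat{S}$). No excellence, no formal fibres, no deviations or Avramov-type ascent. If you want to keep your approach, you should replace the height equality by the inequality, add (and use) the hypothesis that $S$ is excellent so that the closed fibres in step (iv) are regular, and cite the ascent theorem for complete intersections along flat local homomorphisms in a precise form.
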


\begin{proof}
We can view $\widehat{R}$ as $\widehat{S} / I \widehat{S}$ where $\widehat{S}$ is the $\nmax$-adic completion of $S$. Now let $\qprime \in \Spec(\widehat{S})$ with $I \widehat{S} \subseteq \qprime$ and $\height_{\widehat{S} / I \widehat{S}}(\qprime / I \widehat{S}) \leq k$. We need to show that $(\widehat{S} / I \widehat{S})_{\qprime / I \widehat{S}}$, which is isomorphic to $\widehat{S}_{\qprime} / (I \widehat{S})_{\qprime}$, is a complete intersection ring.

Since $\widehat{S}_{\qprime}$ is a regular local ring, it is precisely to show that $(I \widehat{S})_{\qprime}$ is a complete intersection ideal (i.e., generated by $\height_{\widehat{S}_{\qprime}}((I\widehat{S})_{\qprime})$ many elements). Let $\varepsilon_R$ and $\varepsilon_S$ denote the canonical ring maps $R \to \widehat{R}$ and $S \to \widehat{S}$, respectively. Let $\pprime := \varepsilon_S^{-1}(\qprime) \in \Spec(S)$. Then $I \subseteq \pprime$ and $\varepsilon_R^{-1}(\qprime / I \widehat{S}) = \pprime / I$. Since $\varepsilon_R$ satisfies going-down (see \cite[Theorem 8.8. \& Theorem 9.5]{matsumura1989commutative}), we have
\[\height_R(\pprime/ I) \leq \height_{\widehat{S} / I \widehat{S}}(\qprime / I \widehat{S}) \leq k.\]
So by assumption on $R$, it is $R_{\pprime/I} \cong S_{\pprime}/I_{\pprime}$ a complete intersection ring, hence $I_{\pprime}$ is a complete intersection ideal. Therefore, there exist $a_1 \upto a_l \in I$ with $I_{\pprime} = (a_1 \upto a_l)_{S_{\pprime}}$ and $l = \height_{S_{\pprime}}(I_{\pprime})$. This means that for every $a \in I$ there exists an $s \in S \setminus \pprime$ such that $s a \in (a_1 \upto a_l)_S$. But this also shows that $(I \widehat{S})_{\qprime}$ is generated by $\varepsilon_S(a_1) \upto \varepsilon_S(a_l)$.
Using going-down for the ring map $\varepsilon_S$ we get:
\[ \height_{\widehat{S}_{\qprime}}((I\widehat{S})_{\qprime}) \geq \height_{\widehat{S}}(I\widehat{S}) \geq \height_S(\varepsilon_S^{-1}(I\widehat{S})) \geq \height_S(I) = \height_{S_{\pprime}}(I_{\pprime}),\]
where the last equality holds since $I$ itself is a prime ideal (with $I \subseteq \pprime$). It follows that $(I\widehat{S})_{\qprime}$ is a complete intersection ideal.
\end{proof}

One case in which the assumption for an $n$-dimensional affine algebra $A$ to be a complete intersection in codimension $2 + \cid(A)$ (as in Proposition~\ref{PropositionAlgebraWithCIImpliesSpecOfComplIsSimplConnecInCodim2}) is certainly satisfied is when $A$ has isolated singularities (i.e., is regular in codimension $n-1$) and has $\cid(A) \leq n - 3$. Cutkosky \cite{cutkosky1995purity} gives various examples where this is the case (and where $A$ is not a complete intersection). One of them is the following.

\begin{example}\label{ExampleCIinCodimButNotCI}
The affine algebra
\[ A = K[x_1 \upto x_6] / I \text{ with } I = (x_1x_5 - x_2x_4,\, x_1x_6 - x_3x_4,\, x_2x_6-x_3x_5) \]
has dimension 4 and $\cid(A) = 1$. Using the Jacobian criterion (see \cite[Theorem 13.10]{kemper2010course}) we see that the origin in $K^6$ is the only singular point of the variety $X = \Var(I)$. So $A$ is regular in codimension $3$. In particular, $A$ is (not a complete intersection, but) a complete intersection in codimension $2 + \cid(A) = 3$.\hfill$\triangleleft$
\end{example}

Now again let $X$ be an affine $G$-variety. The affine algebra $A$ that appeared in Proposition~\ref{PropositionAlgebraWithCIImpliesSpecOfComplIsSimplConnecInCodim2} will be a separating subalgebra of the invariant ring later on. We will use a different characterization of separating algebras than Proposition~\ref{PropositionSeparatingRadicalCondition}.

\begin{prop}\label{PropositionSeparatingMeansInjectiveMorphism} A finitely generated subalgebra $A \subseteq K[X]^G$ is separating if and only if the induced morphism $\theta: \Spec(K[X]^G) \to \Spec(A)$ is injective.
\end{prop}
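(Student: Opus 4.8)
The plan is to translate the separating condition into a statement about the fibers of the morphism $\theta : \Spec(K[X]^G) \to \Spec(A)$, using the description of $\Varsep$ as the graph of the $G$-action together with the fact that $K[X]^G$ separates orbits. First I would recall that, since $G$ is finite, the invariant ring $K[X]^G$ separates the $G$-orbits, so the points of $\Spec(K[X]^G)$ correspond (on the level of closed points, and more generally via the orbit space) to $G$-orbits of points of $X$; in fact the inclusion $K[X]^G \hookrightarrow K[X]$ realizes $\Spec(K[X]^G)$ as the geometric quotient $X/G$. The key reformulation is: a finitely generated $A \subseteq K[X]^G$ is separating if and only if, for every pair of closed points $x, y \in X$ lying in \emph{different} $G$-orbits, there is some $g \in A$ with $g(x) \neq g(y)$ — equivalently, the images of $x$ and $y$ under the composite $X \to \Spec(A)$ differ. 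Since $A \subseteq K[X]^G$, elements of $A$ are automatically constant on orbits, so the composite $X \to \Spec(A)$ factors as $X \to \Spec(K[X]^G) \xrightarrow{\theta} \Spec(A)$; thus $A$ is separating exactly when $\theta$ separates the closed points of $\Spec(K[X]^G)$, i.e.\ $\theta$ is injective on closed points.

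The next step is to upgrade ``injective on closed points'' to ``injective'' as a map of spectra. For the forward direction, suppose $A$ is separating but $\theta(\pprime) = \theta(\qprime)$ for distinct primes $\pprime, \qprime \in \Spec(K[X]^G)$; passing to a maximal ideal of $K[X]^G$ containing one but not the other (or more carefully, using that $K[X]^G$ is a finitely generated reduced $K$-algebra so that primes are intersections of the maximal ideals above them, and that $\theta$ is a morphism of finite type so closed points go to closed points) one produces closed points of $\Spec(K[X]^G)$ with the same image under $\theta$, contradicting the previous paragraph. Here I would use that $K[X]^G$ is a finitely generated $K$-algebra (Noether) over an algebraically closed field, so the Jacobson property and the Nullstellensatz apply: a radical ideal is the intersection of the maximal ideals containing it, and this lets one detect the distinctness of $\pprime, \qprime$ on closed points. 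For the converse, if $\theta$ is injective then in particular it is injective on closed points, which by the reformulation above says precisely that $A$ separates the orbits of $X$, hence $A$ is a separating algebra.

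The step I expect to be the main obstacle is the careful passage between scheme-theoretic points and closed points — namely verifying that injectivity of $\theta$ on all of $\Spec(K[X]^G)$ is genuinely equivalent to injectivity on closed points in this setting, and that ``separating'' (a condition a priori only about closed points $x, y \in X$) matches injectivity on closed points of the quotient. This requires invoking that $\theta$ is a finite-type morphism between Jacobson schemes of finite type over $K$, so that closed points are dense in every closed subset and preimages/images of closed points behave well; once that is in place the equivalence is a formal consequence of the Nullstellensatz and the orbit-separation property of $K[X]^G$. One should also note that finite generation of $A$ is what guarantees $\Spec(A)$ is of finite type over $K$, so that the Jacobson-space argument is available on both sides.
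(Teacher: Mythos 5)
Your proposal is correct and follows essentially the same route as the paper: reduce injectivity of $\theta$ to injectivity on closed points (the paper cites Dufresne's Theorem~2.2 for precisely the finite-type/Jacobson reduction that you flag as the main obstacle), identify the closed points of $\Spec(K[X]^G)$ with $G$-orbits via surjectivity of the quotient morphism, and then match the separating condition with separation of closed points by elements of $A$.
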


\begin{proof} Since $\Spec(A)$ and $\Spec(K[X]^G)$ are of finite type over the algebraically closed field $K$, it suffices to show that $\theta$ is injective on maximal ideals (see \cite[Theorem 2.2]{dufresne2009separating}). Every maximal ideal of $\Spec(K[X]^G)$ is of the form $\mmax^G$ with a maximal ideal $\mmax$ of $K[X]$, since  the quotient morphism $\pi: \Spec(K[X]) \to \Spec(K[X]^G)$ is surjective.

So take two maximal ideals $\mmax_x,\, \mmax_y$ of $K[X]$, which correspond to two points $x,\,y \in X$, and assume that $\mmax_x \cap A = \mmax_y \cap A$. Thus an invariant $g \in A$ vanishes at $x$ if and only if it vanishes at $y$. We get $g(x) = g(y)$ for all $g \in A$ from this. This gives $f(x) = f(y)$ for all $f \in K[X]^G$ by the separating property, hence $\mmax_x^G = \mmax_y^G$.
\end{proof}

In fact, under some mild additional assumptions on the separating algebra $A$ the map $\theta$ will not only be injective, but a \emph{universal homeomorphism} (i.e., every base change of $\theta$ is a homeo\-morphism). This also means that the property of being simply connected passes well between the spectrum of a separating algebra and the spectrum of the invariant ring.

\begin{lem}\label{LemmaThetaBetweenSeparatingAndInvariantRingIsUniversalHomeomorphism} Assume that $X$ is irreducible and that $A \subseteq K[X]^G$ is a finitely generated, separating algebra such that $K[X]^G$ is a finite $A$-module. Then the induced morphism  $\theta: \Spec(K[X]^G) \to \Spec(A)$ is a universal homeomorphism,
and the functor $\theta^{\ast}$ (as in Equation (\ref{EquationDefiningFunctorPullBack})) is an equivalence of categories.
\end{lem}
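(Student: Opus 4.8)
The plan is to verify that $\theta$ is a universal homeomorphism by exhibiting the three characterizing properties: $\theta$ is integral (equivalently, $K[X]^G$ is a finite, hence integral, $A$-module — which is exactly the hypothesis), $\theta$ is (universally) injective, and $\theta$ is surjective. Integrality is given. Injectivity follows immediately from Proposition~\ref{PropositionSeparatingMeansInjectiveMorphism}, since $A$ is separating. For universal injectivity I would note that $\theta$ is a morphism of finite type between affine $K$-schemes with $K$ algebraically closed, and an integral injective morphism between Jacobson schemes that is injective on closed points is radicial; alternatively one checks that on residue fields $\theta$ induces isomorphisms (both residue fields at closed points are $K$), and since $\theta$ is integral the fibre over a non-closed point is handled by reduction to the closed case via the going-up correspondence. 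Surjectivity of $\theta$ follows from integrality together with the fact that $A \hookrightarrow K[X]^G$ is an injection of domains ($X$ irreducible makes $K[X]$, hence $K[X]^G$, a domain, and $A$ is a subalgebra): an integral extension of domains induces a surjective map on spectra by lying-over. A morphism that is integral, universally injective and surjective is a universal homeomorphism — this is a standard characterization (e.g.\ \cite{grothendieck1962cohomologie} or EGA~IV), and I would cite it rather than reprove it.

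For the second assertion, that $\theta^{\ast}$ is an equivalence of categories of \'etale coverings, I would invoke the topological invariance of the \'etale site: if $f : Y_1 \to Y_2$ is a universal homeomorphism, then pullback along $f$ induces an equivalence between the category of schemes \'etale over $Y_2$ and the category of schemes \'etale over $Y_1$ (SGA~1, or the ``topological invariance of \'etale morphisms''). Since a finite \'etale covering is in particular \'etale, and since the property of being finite is preserved and reflected under a universal homeomorphism (finiteness being checkable after the homeomorphism because $f$ is integral and surjective), $\theta^{\ast}$ restricts to an equivalence on the subcategories of \'etale coverings. Triviality of a covering is likewise preserved: it is a purely topological-plus-\'etale condition (disjoint union of copies mapping isomorphically), and both disjointness and the isomorphism condition transfer across the equivalence.

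The only real subtlety — and the step I expect to require the most care — is establishing \emph{universal} injectivity (radicialness) rather than mere injectivity on points. Proposition~\ref{PropositionSeparatingMeansInjectiveMorphism} only gives set-theoretic injectivity on $K$-points, whereas a universal homeomorphism must stay injective after arbitrary base change, which amounts to the residue-field extensions $\kappa(\theta(\pprime)) \hookrightarrow \kappa(\pprime)$ being purely inseparable (indeed trivial, in our finite-type-over-algebraically-closed setting, at closed points). The key observations that make this go through are: (i) $\theta$ is a closed map (being integral), so images of closed points are closed, i.e.\ maximal-to-maximal; (ii) at a closed point both residue fields equal $K$; and (iii) for a general prime, integrality lets one pass to the fibre ring, which is then a zero-dimensional finite $\kappa(\theta(\pprime))$-algebra with a unique prime (by injectivity) — one must check its residue field extension is purely inseparable, which again reduces, via specialization to a maximal ideal and the Jacobson property, to the closed-point case. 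Once radicialness is in hand, assembling ``integral $+$ radicial $+$ surjective $=$ universal homeomorphism'' and then ``universal homeomorphism $\Rightarrow$ equivalence of \'etale sites'' is formal, and I would simply cite these.
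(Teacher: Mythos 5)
Your proposal is correct and follows essentially the same route as the paper: injectivity from Proposition~\ref{PropositionSeparatingMeansInjectiveMorphism}, pure inseparability of the residue-field extensions (the paper outsources exactly this step to \cite[Proposition 2.3.10]{derksen2002computational} and \cite[Proposition 4.35]{gortz2010algebraic}), then ``integral $+$ surjective $+$ universally injective $\Rightarrow$ universal homeomorphism'' and \cite[IX., Th\'eor\`eme 4.10]{grothendieck1964revetements} for the equivalence of categories of \'etale coverings. The one step you rightly flag as delicate --- that a unique prime in the fibre does not by itself give pure inseparability, so one must argue via injectivity on closed points of the generic fibre --- is precisely what the cited Derksen--Kemper argument supplies.
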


\begin{proof}
By Proposition~\ref{PropositionSeparatingMeansInjectiveMorphism}, $\theta$ is injective. In addition, $\theta$ is dominant. In \cite[Proposition 2.3.10]{derksen2002computational} it is shown that this implies that the extension of the fields of fractions 
$$\Quot(A) \subseteq \Quot(K[X]^G)$$ is finite and purely inseparable.
By the same argument, for all prime ideals $\pprime \subseteq K[X]^G$ the extension $\Quot(A / (\pprime \cap A)) \subseteq \Quot(K[X]^G / \pprime)$ is finite and purely inseparable. By \cite[Proposition 4.35]{gortz2010algebraic}, this means that $\theta$ is universally injective.

By assumption, $K[X]^G$ is a finite $A$-module, hence integral over $A$. So $\theta$ is finite, surjective and universally injective. Therefore $\theta$ is a universal homeomorphism (see \cite[Exercise 12.32]{gortz2010algebraic}) and the second claim follows by \cite[IX., Th\'eor\`eme 4.10]{grothendieck1964revetements}.
\end{proof}

\begin{remark}\label{RemarkAssumptionsInMainTheoremAboutCIareSatisfiedForGradedSepAlgebras} \begin{enumerate}[(a)]
\item Suppose that $K[X]$ is graded (e.g. if $X$ is the affine cone over a projective variety) and that the action of $G$ on $K[X]$ is degree-preserving (so that $K[X]^G$ is graded as well). Then every graded, finitely generated, separating subalgebra $A \subseteq  K[X]^G$ satisfies the assumption of Lemma~\ref{LemmaThetaBetweenSeparatingAndInvariantRingIsUniversalHomeomorphism}, i.e., $K[X]^G$ is a finite $A$-module. This is essentially shown in \cite[Theorem 2.3.12]{derksen2002computational}.
\item A situation where $K[X]$ is graded and the action is degree-preserving is the following: Suppose that $V$ is a linear representation of $G$. For every normal subgroup $N$ of $G$ we get an induced action of $G/N$ on $K[V]^N$ such that $K[V]^G = (K[V]^N)^{G/N}$. So the action of $G$ on $K[V]$ is split into two actions of smaller groups. This can be helpful in order to compute the invariant ring. But now $K[V]^N$ need not be a polynomial ring. However, it inherits the grading of $K[V]$ and the action of $G/N$ on $K[V]^N$ is degree-preserving.
\end{enumerate}
\end{remark}

As a last step before proving the main theorem of this section we collect the following facts about completions.

\begin{lem}\label{LemmaInvariantRingOfCompletion} Let $R := K[X]$ be the coordinate ring of the $G$-variety $X$, and let $\mmax \subseteq R$ be a maximal ideal fixed by $G$. Moreover, let $A \subseteq R^G $ be a finitely generated, separating subalgebra. Then the following hold: 
\begin{enumerate}[(a)] 
\item The $\mmax \cap A$-adic, the $\mmax^G$-adic, and the $\mmax$-adic filtrations on $R$ are all equivalent (i.e., they define the same topology on $R$).
\item The $\mmax \cap A$-adic, and the $\mmax^G$-adic filtrations on $R^G$ are equivalent.
\item The invariant ring of the $\mmax$-adic completion $\widehat{R}$ of $R$ (under the induced action of $G$ on $\widehat{R}$) is isomorphic to the $\mmax^G$-adic completion of the invariant ring:
\[ (\widehat{R})^G \cong \widehat{R^G}.\]
\end{enumerate}
\end{lem}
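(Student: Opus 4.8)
The idea is to prove (a) and (b) by one and the same device — producing a quotient ring with a single maximal ideal, hence Artinian local, so that the relevant maximal ideal becomes nilpotent in it and the three (resp. two) filtrations interleave — and then to deduce (c) formally from exactness of completion. Write $\mmax^G := \mmax\cap R^G$. Since contractions of maximal ideals along inclusions of finitely generated $K$-algebras are again maximal (Nullstellensatz), both $\mmax^G\subseteq R^G$ and $\mmax\cap A\subseteq A$ are maximal ideals, and we have inclusions $\mmax\cap A\subseteq\mmax^G\subseteq\mmax$, hence trivially $(\mmax\cap A)^nR\subseteq(\mmax^G)^nR\subseteq\mmax^n$ in $R$ and $(\mmax\cap A)^nR^G\subseteq(\mmax^G)^n$ in $R^G$ for all $n$.

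The crucial input is two uniqueness statements, and this is exactly where separating-ness of $A$ and $G$-fixedness of $\mmax$ enter. First, $\mmax^G$ is the unique maximal ideal of $R^G$ lying over $\mmax\cap A$: this is precisely the injectivity of the induced morphism $\theta\colon\Spec(R^G)\to\Spec(A)$ on maximal ideals, from Proposition~\ref{PropositionSeparatingMeansInjectiveMorphism}. Second, $\mmax$ is the unique maximal ideal of $R$ lying over $\mmax\cap A$: if $\mathfrak{q}\subseteq R$ is maximal with $\mathfrak{q}\cap A=\mmax\cap A$, then $\mathfrak{q}\cap R^G$ and $\mmax^G$ both lie over $\mmax\cap A$, so they agree by the first statement; since $G$ is finite, $R$ is module-finite (so integral) over $R^G$, the group $G$ acts transitively on the maximal ideals of $R$ over $\mmax^G$, and the orbit of $\mmax$ is $\{\mmax\}$ because $\mmax$ is $G$-fixed, whence $\mathfrak{q}=\mmax$.

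Now (a) and (b). The ring $R/(\mmax\cap A)R$ is a finitely generated $K$-algebra all of whose maximal ideals lie over the maximal ideal $\mmax\cap A$ of $A$, so by the second uniqueness statement its only maximal ideal is $\mmax/(\mmax\cap A)R$; a finitely generated $K$-algebra with a unique maximal ideal is Artinian local (being Jacobson, that ideal is its only prime, so it has dimension $0$), hence $\mmax^k\subseteq(\mmax\cap A)R$ for some $k$. In the same way, $R^G/(\mmax\cap A)R^G$ has $\mmax^G/(\mmax\cap A)R^G$ as its only maximal ideal, so $(\mmax^G)^l\subseteq(\mmax\cap A)R^G$ for some $l$. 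Taking $n$-th powers and combining with the inclusions noted above yields $\mmax^{kn}\subseteq(\mmax\cap A)^nR\subseteq(\mmax^G)^nR\subseteq\mmax^n$ for all $n$, and $(\mmax^G)^{ln}\subseteq(\mmax\cap A)^nR^G\subseteq(\mmax^G)^n$ for all $n$; the first interleaving says the three filtrations on $R$ are equivalent (that is (a)), the second that the two filtrations on $R^G$ are equivalent (that is (b)).

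Finally (c). Equivalent filtrations define the same topology, hence the same completion, so by (a) the $\mmax$-adic completion $\widehat{R}$ is the $\mmax^G$-adic completion of $R$; as $\mmax$ is $G$-stable each $\sigma\in G$ is continuous for this topology and extends to $\widehat{R}$, giving the induced action. Now $R^G$ is the kernel of the $R^G$-linear map $R\to R^{|G|}$, $x\mapsto(\sigma x-x)_{\sigma\in G}$, so $0\to R^G\to R\to R^{|G|}$ is an exact sequence of finitely generated $R^G$-modules (all three are finite over the Noetherian ring $R^G$, using once more that $G$ is finite). Since $\mmax^G$-adic completion is exact on finitely generated $R^G$-modules, it carries this sequence to $0\to\widehat{R^G}\to\widehat{R}\to(\widehat{R})^{|G|}$ with second arrow $x\mapsto(\sigma x-x)_{\sigma}$ on $\widehat{R}$, whose kernel is $(\widehat{R})^G$; hence $(\widehat{R})^G\cong\widehat{R^G}$, and by (b) one may equally read $\widehat{R^G}$ as the $\mmax\cap A$-adic completion. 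I expect the only real obstacle to be the two uniqueness-of-maximal-ideal statements: everything after them is formal, and they are exactly where separating-ness of $A$, $G$-fixedness of $\mmax$, and finiteness of $R$ over $R^G$ all get used.
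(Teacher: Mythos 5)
Your proof is correct and, for parts (a) and (b), follows essentially the same route as the paper: the whole content is that $\mmax$ is the only prime of $R$ over $\mmax\cap A$ (which you get, as the paper does, from injectivity of $\Spec(R^G)\to\Spec(A)$ via Proposition~\ref{PropositionSeparatingMeansInjectiveMorphism} together with transitivity of $G$ on the fibres of $\Spec(R)\to\Spec(R^G)$ and the $G$-fixedness of $\mmax$), after which nilpotence of $\mmax$ in the Artinian quotient $R/(\mmax\cap A)R$ gives the interleaving of filtrations. Part (c) is left to the reader in the paper (with a reference to the author's dissertation), and your argument for it --- completing the exact sequence $0\to R^G\to R\to R^{|G|}$ of finite $R^G$-modules and using flatness of $\widehat{R^G}$ together with part (a) to identify the middle term with $\widehat{R}$ --- is a correct and standard way to supply it.
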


\begin{proof}(a)
Of course, we have $\sqrt{(\mmax \cap A) R} \subseteq \sqrt{\mmax^G R} \subseteq \m$. For part (a) we need show that these are actually equalities. For this let $\pprime \in \Spec(R)$ be any prime ideal containing $\mmax \cap A$. So in $A$ the inclusion $\mmax \cap A \subseteq \pprime \cap A$ holds.

It is $\mmax \cap A$ a maximal ideal in $A$ (see \cite[Prop. 1.2]{kemper2010course}). 
Hence we get $\mmax \cap A = \p \cap A$, and so in particular $\m^G \cap A = \p^G \cap A$. As $A$ is separating, this implies $\m^G = \p^G$ by Proposition~\ref{PropositionSeparatingMeansInjectiveMorphism}. Since the quotient map is a geometric quotient, this implies that there exists an element $\sigma \in G$ with $\sigma \p = \m$. But $\sigma^{-1} \m = \m$, hence $\p = \m$. \\
So $\m$ is the only prime ideal containing $(\mmax \cap A) R$, which gives $\sqrt{(\mmax \cap A) R} = \m$.

Part (b) follows with similar reasoning in $R^G$.

Part (c) is left to the reader. Also it is available in the author's dissertation \cite[Theorem 4.21]{reimers2016dissertation}.
\end{proof}

We come to our main result of this section. It extends Dufresne's result \cite[Theorem 1.3]{dufresne2009separating} to non-linear actions on normal and connected varieties (i.e., varieties whose coordinate ring is an integrally closed domain), and to separating algebras that are complete intersections in codimension $2 + \cid(A)$.

\begin{thm}\label{TheoremMainTheoremOnCompleteIntersectionAndBireflection}
Assume that the $G$-variety $X$ is normal and connected and that $X^G \neq \emptyset$. If there exists a finitely generated, separating algebra $A \subseteq K[X]^G$, such that $K[X]^G$ is a finite $A$-module, and that $A$ is a complete intersection in codimension $2 + \cid(A)$, then $G$ is generated by $2$-reflections. 
\end{thm}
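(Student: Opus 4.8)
The strategy is to reduce to Lemma~\ref{LemmaVinberg} applied to a completion at a $G$-fixed point, using Proposition~\ref{PropositionAlgebraWithCIImpliesSpecOfComplIsSimplConnecInCodim2} to supply the ``simply connected in codimension $2$'' hypothesis. First I would pick a maximal ideal $\mmax \subseteq K[X]$ corresponding to a point of $X^G$; this $\mmax$ is $G$-stable. Since $X$ is normal and connected, $K[X]$ is an integrally closed domain, so $\widehat{R}$ (the $\mmax$-adic completion) is a normal, hence integrally closed, local domain, and $\X := \Spec(\widehat{R})$ is irreducible, in particular connected in codimension $2$ (indeed in codimension $0$). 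The group $G$ acts on $\widehat{R}$ because $\mmax$ is fixed, and by Lemma~\ref{LemmaInvariantRingOfCompletion}(c) the quotient $\Spec(\widehat{R}) \to \Spec((\widehat{R})^G) = \Spec(\widehat{R^G})$ is the relevant quotient morphism; this action is admissible and $\widehat{R}$ is a finite, hence finitely presented, module over $(\widehat{R})^G$ because $G$ is finite. A $k$-reflection on $\X = \Spec(\widehat{R})$ corresponds to a $2$-reflection on $X$ (for $k = 2$) once one checks that the fixed-point codimension is preserved by completion at a point lying on $X^\sigma$ — this follows from flatness of completion and the dimension formula for the localized completion.

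Next I would produce the ``simply connected in codimension $2$'' input for the base $Y$. Let $B := A$, a finitely generated separating subalgebra with $K[X]^G$ finite over $A$ and $A$ a complete intersection in codimension $2 + \cid(A)$. Since $X$ is irreducible, $A$ is an affine $K$-domain. Let $\nmax := \mmax \cap A$, a maximal ideal of $A$, and let $\widehat{A}$ be its completion. Proposition~\ref{PropositionAlgebraWithCIImpliesSpecOfComplIsSimplConnecInCodim2} gives that $\Spec(\widehat{A})$ is simply connected in codimension $2$. By Lemma~\ref{LemmaInvariantRingOfCompletion}(a),(b) the $\nmax$-adic, $\mmax^G$-adic, and $\mmax$-adic filtrations are all equivalent, so $\widehat{R^G}$ is the $\nmax$-adic completion of $R^G$, and $\theta \colon \Spec(K[X]^G) \to \Spec(A)$ completes to a finite morphism $\widehat{\theta} \colon \Spec(\widehat{R^G}) \to \Spec(\widehat{A})$. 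By Lemma~\ref{LemmaThetaBetweenSeparatingAndInvariantRingIsUniversalHomeomorphism}, $\theta$ (and hence $\widehat{\theta}$, after checking the completed map is still a universal homeomorphism with $\widehat{\theta}^{\ast}$ an equivalence of categories) transports ``simply connected in codimension $2$'' from $\Spec(\widehat{A})$ to $\Spec(\widehat{R^G})$. Thus $Y := \Spec((\widehat{R})^G) = \Spec(\widehat{R^G})$ is simply connected in codimension $2$ in the sense of property~(\ref{PropertySimplyConnectedInCodimensionk}) with $k = 2$.

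Now I apply Lemma~\ref{LemmaVinberg} with $\X = \Spec(\widehat{R})$, $k = 2$, quotient $\widehat{\pi} \colon \X \to Y$: $\X$ is separated and connected in codimension $2$, $G$ acts admissibly, $\X$ is of finite presentation over $Y$, and $Y$ is simply connected in codimension $2$. The conclusion is that $G$ is generated by $2$-reflections on $\X = \Spec(\widehat{R})$. Finally I translate this back: a $2$-reflection $\sigma$ on $\Spec(\widehat{R})$, i.e. one with $\codim_{\Spec(\widehat{R})}(\Spec(\widehat{R})^\sigma) \le 2$, is a $2$-reflection on $X$, because $\widehat{R}^\sigma$ is the completion of $R^\sigma_{\mmax}$ (the completion functor commutes with taking fixed points here, or: $\widehat{R}/I_\sigma\widehat{R} = \widehat{R/I_\sigma}$), and completion preserves dimension and codimension at a point in the closed locus. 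Hence $G$ is generated by $2$-reflections on $X$, as claimed.

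\textbf{Main obstacle.} The delicate point is the passage to the completion: verifying that $\Spec(\widehat{R})$ is still connected in codimension $2$ (here automatic from normality, so in fact irreducible), that the completed quotient morphism $\widehat{\pi}$ really is the quotient of $\Spec(\widehat{R})$ by $G$ with $(\widehat{R})^G \cong \widehat{R^G}$ (Lemma~\ref{LemmaInvariantRingOfCompletion}(c), whose proof is deferred in the excerpt), and above all that ``simply connected in codimension $2$'' genuinely descends along the finite universal homeomorphism $\widehat{\theta}$ — one must invoke the equivalence of étale covering categories (Lemma~\ref{LemmaThetaBetweenSeparatingAndInvariantRingIsUniversalHomeomorphism} together with Remark~\ref{RemarkEquivalenceOfCategoriesAndSimplyConnected}) both globally and after removing the exceptional locus $\widehat{\pi}(L)$, keeping careful track that the functors and the codimension bounds match up. The ``reflections on the scheme $\iff$ reflections on $X$'' bookkeeping, while conceptually routine, also needs the codimension-at-a-point compatibility of completion and must be stated cleanly.
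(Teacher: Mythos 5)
Your proposal is correct and follows essentially the same route as the paper: complete at a $G$-fixed point, use Proposition~\ref{PropositionAlgebraWithCIImpliesSpecOfComplIsSimplConnecInCodim2} to make $\Spec(\widehat{A})$ simply connected in codimension $2$, transport this to $\Spec(\widehat{K[X]}^G)$ along the universal homeomorphism coming from Lemma~\ref{LemmaThetaBetweenSeparatingAndInvariantRingIsUniversalHomeomorphism}, apply Lemma~\ref{LemmaVinberg} to the irreducible (by Zariski) scheme $\Spec(\widehat{K[X]})$, and descend the $2$-reflection property via going-down. The one detail you flag as "to be checked" — that the completed map is still a universal homeomorphism — is handled in the paper by observing that finiteness of $K[X]^G$ over $A$ makes the completion equal to $\widehat{A}\otimes_A(-)$, so the completed morphism is literally a base change of $\theta$.
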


\begin{proof}
Let $\mmax$ be a maximal ideal of $K[X]$ corresponding to a fixed point $x \in X^G$. By Lemma~\ref{LemmaInvariantRingOfCompletion} the $\mmax^G$-adic completion of $K[X]^G$ and the $\mmax$-adic completion of $K[X]$ are isomorphic to the $\mmax \cap A$-adic completion of $K[X]^G$ and $K[X]$, respectively. Furthermore, since $K[X]^G$ and $K[X]$ are finite $A$-modules, these completions are isomorphic to tensor products with $\widehat{A}$ (see  \cite[Theorem 8.7]{matsumura1989commutative}).

The inclusions $i: A \to K[X]^G$ and $j: K[X]^G \to K[X]$ induce ring homomorphisms between the completions of these rings. The situation is summarized in the following diagram:
\[
    \xymatrix{ 
    & \widehat{K[X]}^G \ar[d]^{\cong,\, \text{see Lemma 3.11(c)}} \\
    	\widehat{A} \ar[r]^{\widehat{i}}\ar[ur]^{=:\,\varphi} \ar[d]^{\cong} & \widehat{K[X]^G} \ar[r]^{\widehat{j}} \ar[d]^{\cong} & \widehat{K[X]} \ar[d]^{\cong} \\
    	\widehat{A} \otimes_A A \ar[r] & \widehat{A} \otimes_A K[X]^G \ar[r] & \widehat{A} \otimes_A K[X].
    }
\]

So the homomorphism $\varphi: \widehat{A} \to \widehat{K[X]}^G$ corresponds to the homomorphism 
\[ \id_{\widehat{A}} \otimes\, i \,\, :\quad \widehat{A} \otimes_A A \longrightarrow  \widehat{A} \otimes_A K[X]^G.
\]
Therefore, the scheme morphism $\omega := \Spec(\varphi)$ induced by $\varphi$ corresponds to a base change of the morphism $\theta = \Spec(i): \Spec(K[X]^G) \to \Spec(A)$. This map is a universal homeomorphism by Lemma~\ref{LemmaThetaBetweenSeparatingAndInvariantRingIsUniversalHomeomorphism}. Hence $\omega$ is a universal homeomorphism, too. 

With the complete intersection assumption on $A$, Proposition~\ref{PropositionAlgebraWithCIImpliesSpecOfComplIsSimplConnecInCodim2} shows that $\Spec(\widehat{A})$ is simply connected in codimension 2. 

Next we see that $\Spec(\widehat{K[X]}^G)$ is simply connected in codimension 2 as follows: Let $Z \subseteq \Spec(\widehat{K[X]}^G)$ be a closed subset of codimension $> 2$. Since $$\omega: \Spec(\widehat{K[X]}^G) \to \Spec(\widehat{A})$$ is finite, the set $\omega(Z)$ is closed and of codimension $> 2$ as well, hence $\Spec(\widehat{A}) \setminus \omega(Z)$ is simply connected. The restriction of $\omega$ gives a morphism $$\omega_0: \Spec(\widehat{K[X]}^G) \setminus Z \to \Spec(\widehat{A}) \setminus \omega(Z),$$ which is also a universal homeomorphism (since this property is ''local on the target'', see \cite[Corollary 4.33]{gortz2010algebraic}).
So as in the proof of Lemma~\ref{LemmaThetaBetweenSeparatingAndInvariantRingIsUniversalHomeomorphism}, it follows by \cite[IX., Th\'eor\`eme 4.10]{grothendieck1964revetements} that the functor $\omega_0^{\ast}$ is an equivalence of categories. By Remark~\ref{RemarkEquivalenceOfCategoriesAndSimplyConnected}, $\Spec(\widehat{K[X]}^G) \setminus Z$ is simply connected as well.

So we are in a position to apply Lemma~\ref{LemmaVinberg} to $\Spec(\widehat{K[X]})$. This space is irreducible by the normality assumption and a result of Zariski \cite{zariski1948irreducibilty}. Furthermore, its quotient is simply connected in codimension 2. So Lemma~\ref{LemmaVinberg} shows that $G$ is generated by elements $\sigma$ that are 2-reflections on $\Spec(\widehat{K[X]})$. 

But such an element $\sigma$ is a $2$-reflection on $X$ as well: For this we need to show that the ideal $J := \{ f - \sigma f \mid f \in K[X] \}$ in $K[X]$ has height $\leq 2$ (see Remark~\ref{RemarkKreflectionAndHeightOfCorrespondingIdeal}). By assumption on $\sigma$, the ideal $I := \{ f - \sigma f \mid f \in \widehat{K[X]} \}$ in $\widehat{K[X]}$ has height $\leq 2$. But going-down for the homomorphism $\varepsilon: K[X] \to \widehat{K[X]}$ then gives $\height_{K[X]}(\varepsilon^{-1}(I)) \leq 2$. And since $\varepsilon^{-1}(I)$ contains $J$, it follows that $\sigma$ is a $2$-reflection on $X$ as well.
\end{proof}

An example like Example~\ref{ExampleCIinCodimButNotCI} with an invariant ring $A = K[V]^G$ of a representation would certainly be a nice addendum to Theorem~\ref{TheoremMainTheoremOnCompleteIntersectionAndBireflection}. But for such invariant rings (at least in the non-modular case) being an isolated singularity is a rather strong condition (see \cite[Lemma 2.4]{stepanov2014gorenstein}). This leads to the following result as a corollary.

\begin{thm}\label{TheoremUsingIsolatedSingularityForAWeakCorollary}
Assume that $\characteristic(K) \nmid |G|$ and that $X = V$ is a non-trivial linear representation of $G$ with $n = \dim(V) \geq 3$. If $\cid(K[V]^G)~\leq~n-3$, then $G \setminus \{ \id \}$ contains an $(n-1)$-reflection.
\end{thm}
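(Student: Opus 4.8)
The plan is to prove the contrapositive: assuming that $G\setminus\{\id\}$ contains no $(n-1)$-reflection, I will show that $K[V]^G$ is regular in codimension $n-1$, combine this with the hypothesis $\cid(K[V]^G)\le n-3$ to apply Theorem~\ref{TheoremMainTheoremOnCompleteIntersectionAndBireflection}, and thereby reach a contradiction since $n\ge 3$. First I would reduce to a faithful action: replacing $G$ by its quotient $\overline{G}$ by the kernel of the action changes neither $K[V]^G$ (hence not $\cid(K[V]^G)$) nor the non-modularity assumption, it keeps $\overline{G}\ne\{\id\}$ because the representation is non-trivial, and any $(n-1)$-reflection in $\overline{G}$ pulls back to one in $G$. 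Under the contrapositive hypothesis every $\sigma\ne\id$ then satisfies $\codim_V V^{\sigma}=n$, i.e.\ $V^{\sigma}=\{0\}$; equivalently, $G$ acts freely on $V\setminus\{0\}$.

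The crucial step is to convert this free action into a regularity statement about the invariant ring. Since $G$ acts freely on the smooth variety $V\setminus\{0\}$, the quotient morphism $V\setminus\{0\}\to(V\setminus\{0\})/G$ is a $G$-torsor, hence finite \'etale, so $(V\setminus\{0\})/G$ is smooth. As $V\setminus\{0\}$ is $G$-stable and the fiber of $\pi:\Spec(K[V])\to\Spec(K[V]^G)$ over the image of the origin is just $\{0\}$, the quotient $(V\setminus\{0\})/G$ is the open subscheme $\Spec(K[V]^G)\setminus\{\mathfrak{m}_0\}$, where $\mathfrak{m}_0$ is the (irrelevant) maximal ideal corresponding to the image of the origin. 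Therefore $\Spec(K[V]^G)$ is regular away from the single closed point $\mathfrak{m}_0$, which has height $n$ because $K[V]^G$ is an equidimensional affine $K$-domain of dimension $n$; that is, $K[V]^G$ is regular in codimension $n-1$. I expect this to be the main obstacle — making precise that a faithful linear action that is fixed-point-free on $V\setminus\{0\}$ forces the invariant ring to have at most an isolated singularity. This is essentially the content attributed above to \cite[Lemma 2.4]{stepanov2014gorenstein}, which could be invoked directly in place of the torsor argument.

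To finish, recall from the discussion preceding Example~\ref{ExampleCIinCodimButNotCI} that an affine $K$-algebra of dimension $n$ that is regular in codimension $n-1$ and has complete intersection defect at most $n-3$ is a complete intersection in codimension $2+\cid$. Hence $A:=K[V]^G$ is itself a finitely generated separating subalgebra of $K[V]^G$ such that $K[V]^G$ is trivially a finite $A$-module and $A$ is a complete intersection in codimension $2+\cid(A)$; moreover $X=V$ is normal and connected and $X^G\ne\emptyset$ since $0\in X^G$. So Theorem~\ref{TheoremMainTheoremOnCompleteIntersectionAndBireflection} applies and $G$ is generated by $2$-reflections. Since $G\ne\{\id\}$, it contains a nontrivial $2$-reflection $\sigma$, and then $\codim_V V^{\sigma}\le 2\le n-1$ because $n\ge 3$, so $\sigma$ is a nontrivial $(n-1)$-reflection — contradicting the hypothesis and completing the proof.
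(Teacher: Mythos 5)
Your proof is correct and follows essentially the same path as the paper's: pass to the contrapositive, show $K[V]^G$ has (at worst) an isolated singularity, deduce that it is a complete intersection in codimension $2+\cid(K[V]^G)\le n-1$, and invoke Theorem~\ref{TheoremMainTheoremOnCompleteIntersectionAndBireflection} to get generation by $2$-reflections, which contradicts the hypothesis since a nontrivial $2$-reflection is an $(n-1)$-reflection for $n\ge 3$. The only difference is that you prove the isolated-singularity step yourself (reduction to a faithful action, freeness on $V\setminus\{0\}$, finite \'etale quotient, descent of regularity), whereas the paper simply cites Lemma 2.4 of Stepanov at that point --- as you yourself observe, the two are interchangeable.
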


\begin{proof}
Assume that $G$ contains no $(n-1)$-reflections other than the identity element. By \cite[Lemma 2.4]{stepanov2014gorenstein}, these assumptions on $G$ and $V$ then imply that $K[V]^G$ is an isolated singularity. But $\cid(K[V]^G)~\leq~n-3$ now means that $K[V]^G$ is regular (hence complete intersection) in codimension $2 + \cid(K[V]^G)$. And by Theorem~\ref{TheoremMainTheoremOnCompleteIntersectionAndBireflection}, $G$ would be generated by 2-reflections, contradicting the assumption.
\end{proof}

For $K = \C$ Kac-Watanabe \cite[Theorem B]{kac1982finite} obtained a stronger result than Theorem~\ref{TheoremUsingIsolatedSingularityForAWeakCorollary}: If $\cid(\C[V]^G) = k$, then $G$ is generated by $(k+2)$-reflections. 

\begin{example} Let $\characteristic(K) \neq 3$ and let $\zeta \in K$ be a third root of unity. The invariant ring of 
\[ G = \langle \begin{pmatrix}1&0&0&0\\0&1&0&0\\0&0&\zeta&0\\0&0&0&\zeta\end{pmatrix}\rangle \leq \GL_4(K)\] is minimally generated by
\[ f_1 := x_1,\,\,\, f_2 := x_2,\,\,\,f_3 := x_3^3,\,\,\,f_4 := x_3^2x_4,\,\,\,f_5 := x_3x_4^2,\,\,\,f_6 := x_4^3.\] 
Between these 6 generators there are 3 relations:
\[f_3f_6 - f_4 f_5,\quad f_4^2 - f_3f_5,\quad f_5^2 - f_4f_6,\]
hence $\cidef(K[V]^G) = 1$. So the assumptions of Theorem \ref{TheoremUsingIsolatedSingularityForAWeakCorollary} are satisfied. The conclusion that $G$ contains a $3$-reflection other than the identity is not very strong here, as $G$ is indeed a $2$-reflection group. \end{example}

%%%%%%%%%%%%%%%%%%%%%%%%%%%%%%%%%%%%%%%%%%%%%%%%%%%%%%%%%%%%%%%%%%%%%%
\section{Minimal Number of Separating Invariants}\label{SectionMinNumber}
%%%%%%%%%%%%%%%%%%%%%%%%%%%%%%%%%%%%%%%%%%%%%%%%%%%%%%%%%%%%%%%%%%%%%%

This section extends the result of Dufresne and Jeffries \cite{dufresne2013separating} on $\gammasep$ to non-linear actions on varieties. The proof relies on Grothendieck's connectedness theorem and makes similar use of the separating variety $\Varsep$ as the results in Section \ref{SectionPolySepAlgebras}. 

Grothendieck's connectedness theorem is usually formulated with the notion of \emph{connectedness in dimension $d$}:
For an integer $d$, a Noetherian topological space $Y$ is called connected in dimension $d$
if for all closed subsets $Z \subseteq Y$ with  $\dim(Z) < d$ the space $Y \setminus Z$ is connected.

\begin{remark}\label{RemarkAboutConnectedInDimension}
If $R$ is the coordinate ring of an equidimensional affine variety and $Y = \Spec(R)$, then for all closed subsets $Z \subseteq Y$ the formula  
\begin{equation}\label{DimensionCodimensionFormula} \codim_Y(Z) + \dim(Z) = \dim(Y) \end{equation}
holds (see \cite[Corollary 8.23]{kemper2010course}). Hence $Y$ is connected in dimension $d$ if and only if $Y$ is connected in codimension $n-d$ where $n = \dim(Y)$. 

Now if $\widehat{R}$ is the completion of this coordinate ring $R$ at a maximal ideal, then $\widehat{R}$ is local, equidimensional and catenary (see the results of \cite[Section 31]{matsumura1989commutative}). Thus, formula (\ref{DimensionCodimensionFormula}) and the above conclusion about connectedness are also true for $Y = \Spec(\widehat{R})$.
\end{remark}

Next we state Grothendieck's connectedness theorem as taken from \cite[Theorem 19.2.12]{brodmann2013local}.

\begin{thm} \emph{\textbf{(Grothendieck)}}
\label{TheoremGrothendieckConnectedness} 
Let $R$ be a complete Noetherian local ring of dimension $n$ such that $\Spec(R)$ is connected in dimension $d$ with an integer $d < n$. Furthermore, let $\mmax$ be the maximal ideal of $R$ and $f_1 \upto f_r \in \mmax$. Then  $\Spec(R / (f_1 \upto f_r))$ is connected in dimension $d - r$.
\end{thm}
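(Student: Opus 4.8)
The plan is to prove the statement by induction on $r$, the crux being the one-element case $r = 1$. Note that the quotient $R_1 := R/(f_1)$ is again a complete Noetherian local ring, of dimension $\dim R_1 \ge n - 1 > d - 1$; so, once the case $r = 1$ is known, it yields that $\Spec R_1$ is connected in dimension $d - 1$, and reapplying the $r = 1$ case to $R_1$, $d - 1$ and $f_2$ gives connectedness in dimension $d - 2$, and so on. After $r$ steps $\Spec(R/(f_1 \upto f_r))$ is connected in dimension $d - r$ (the claim being trivially true as soon as $d - i \le 0$, since the spectrum of a local ring is connected). Thus everything comes down to showing: if $\Spec R$ is connected in dimension $d$ and $f := f_1 \in \mmax$, then the closed subset $Y_1 := \Spec(R/(f)) = \{ \pprime \in \Spec R \mid f \in \pprime \}$ is connected in dimension $d - 1$.

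For this I would work with the reformulation of connectedness in a prescribed dimension via chains of irreducible components: for a Noetherian space, being connected in dimension $e$ is equivalent to the property that any two irreducible components are joined by a finite chain of irreducible components whose consecutive intersections have dimension $\ge e$ (the dimension-theoretic analogue of the codimension criterion recalled in Section~\ref{SectionVarSepAndConnectedness}). By hypothesis, any two components of $\Spec R$ are joined by such a chain $C_0 \upto C_s$ with $\dim(C_i \cap C_{i+1}) \ge d$, and the task is to descend this to a chain of irreducible components of $Y_1$ whose consecutive intersections have dimension $\ge d - 1$. On the face of it, cutting an overlap $C_i \cap C_{i+1}$ of dimension $\ge d$ by the single element $f$ costs at most one: either $C_i \cap C_{i+1} \subseteq Y_1$, so it survives untouched with dimension $\ge d$, or else, by Krull's principal ideal theorem and catenarity, some irreducible component of $(C_i \cap C_{i+1}) \cap Y_1$ has dimension $\ge d - 1$.

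The real difficulty — and the step I expect to be the main obstacle — is that a component $C$ of $\Spec R$ with $C \not\subseteq Y_1$ meets $Y_1$ in a proper and generally \emph{reducible} closed subset, so no single component of $Y_1$ inherits the role of $C$ in the chain, and the component chain of $\Spec R$ does not pass to $Y_1$ by purely topological reasoning; one must understand how the traces $C \cap Y_1$ hang together across successive components. This is precisely where completeness of $R$ is essential (the assertion fails for general Noetherian local rings). I would resolve it with the local cohomology machinery underlying the connectedness theorems of Hartshorne and Grothendieck: from a hypothetical disconnection of $Y_1$ in dimension $d - 1$ one produces a decomposition $\Spec R = \Var(\mathfrak a_1) \cup \Var(\mathfrak a_2)$ with $\dim(\Var(\mathfrak a_1) \cap \Var(\mathfrak a_2)) < d$ and neither piece all of $\Spec R$; feeding the pair $(\mathfrak a_1, \mathfrak a_2)$ into the Mayer--Vietoris sequence for the local cohomology functors $H^i_{\mmax}(-)$ and invoking the Hartshorne--Lichtenbaum vanishing theorem (which requires $R$ complete) together with the second vanishing theorem kills the top cohomology and forces $\dim(\Var(\mathfrak a_1) \cap \Var(\mathfrak a_2)) \ge d$, a contradiction. (Alternatively one can induct in addition on $\dim R$, peeling off the minimal primes of $R$ and using that a complete local ring is catenary with each $R/\pprime$ a complete local domain.) Either route threads the traces $C \cap Y_1$ into a single chain with consecutive intersections of dimension $\ge d - 1$, so $Y_1$ is connected in dimension $d - 1$, and the induction on $r$ completes the proof. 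This is, in essence, the argument carried out in \cite[Chapter~19]{brodmann2013local}.
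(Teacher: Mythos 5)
The first thing to note is that the paper does not actually prove Theorem~\ref{TheoremGrothendieckConnectedness}: it is stated verbatim from \cite[Theorem 19.2.12]{brodmann2013local} and used as a black box, so there is no in-paper argument to compare yours against. Judged on its own terms, your reduction to the case $r=1$ is correct and cleanly done: $R/(f_1)$ is again a complete Noetherian local ring, Krull's principal ideal theorem gives $\dim(R/(f_1)) \geq n-1 > d-1$, so the hypothesis of the one-element case is preserved at each step, and the statement becomes vacuous once $d-i \leq 0$ because the spectrum of a local ring is connected. You also correctly diagnose that the naive chain-of-components argument does not descend to $\Var(f)$ and that completeness must enter through local cohomology.

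The gap sits exactly at the step that carries all the content, and it is twofold. First, the $r=1$ case is only announced (``I would resolve it with\ldots''), not argued. Second, the lifting you propose cannot work as stated: from a disconnection of $Y_1 = \Var(f)$ in dimension $d-1$ you claim to produce a non-trivial decomposition $\Spec(R) = \Var(\mathfrak{a}_1) \cup \Var(\mathfrak{a}_2)$ with neither piece equal to $\Spec(R)$; but when $R$ is a domain its spectrum is irreducible and admits no such decomposition --- and the complete local \emph{domain} case is precisely the hard core of the theorem (one reduces to it by passing to $R/\pprime$ for $\pprime$ minimal, as you mention parenthetically). In the argument of \cite[Chapter 19]{brodmann2013local} the Mayer--Vietoris sequence is instead applied to ideals $\mathfrak{b}_1, \mathfrak{b}_2 \subseteq R$ cutting out the two pieces of $\Var(f)$ itself, so that $\sqrt{\mathfrak{b}_1 \cap \mathfrak{b}_2} = \sqrt{(f)}$ and $\Var(\mathfrak{b}_1+\mathfrak{b}_2)$ is their overlap; in outline, the vanishing of $H^{n-1}_{(f)}(R)$ (a principal ideal has cohomological dimension at most one) and of $H^{n}_{\mathfrak{b}_i}(R)$ (Hartshorne--Lichtenbaum, which is where completeness is used) forces $H^{n}_{\mathfrak{b}_1+\mathfrak{b}_2}(R)=0$, whence the overlap has positive dimension by Grothendieck's non-vanishing theorem, and the sharper bound $\geq d-1$ is then extracted by localizing at suitable primes; the ``second vanishing theorem'' plays no role. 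So the skeleton of your proof is the standard one, but the decisive lemma is both unproved and, in the form you state it, false.
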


In order to use Grothendieck's connectedness theorem, we need to bring the connectedness properties back from $\Spec(\widehat{R})$ to $\Spec(R)$.

\begin{lem}\label{LemmaCompletionConnectedInCodimImpliesRingConnectedInCodim}
Let $(R,\,\mmax)$ be a Noetherian local ring and $(\widehat{R},\,\widehat{\mmax})$ its $\mmax$-adic completion.
If $\Spec(\widehat{R})$ is connected in codimension $k$, then $\Spec(R)$ is connected in codimension $k$, too.
\end{lem}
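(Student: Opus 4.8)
The plan is to descend connectedness along the faithfully flat morphism $\pi\colon \Spec(\widehat R) \to \Spec(R)$ induced by the completion map $\varepsilon\colon R \to \widehat R$ (which is faithfully flat since $R$ is Noetherian local), using the characterization (\ref{FirstVersionOfConnectednessInCodim}) of connectedness in codimension $k$. So let $Z \subseteq \Spec(R)$ be a closed subset with $\codim_{\Spec(R)}(Z) > k$; we must show that $\Spec(R) \setminus Z$ is connected. The case $Z = \emptyset$, where $\codim_{\Spec(R)}(\emptyset) = \infty$, in particular recovers that $\Spec(R)$ itself is connected.

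Put $\widehat Z := \pi^{-1}(Z)$, a closed subset of $\Spec(\widehat R)$. The first step is to check that $\codim_{\Spec(\widehat R)}(\widehat Z) > k$; if $\widehat Z = \emptyset$ this is vacuous (and then $Z = \emptyset$ by surjectivity of $\pi$), so assume $\widehat Z \neq \emptyset$. Since $\varepsilon$ is flat, $\pi$ satisfies going-down, so for every $\q \in \widehat Z$, setting $\p := \varepsilon^{-1}(\q) \in Z$, a saturated chain of primes descending from $\p$ in $R$ lifts through going-down to a strictly descending chain from $\q$ in $\widehat R$; hence $\height_{\widehat R}(\q) \ge \height_R(\p) \ge \codim_{\Spec(R)}(Z) > k$. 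Taking the infimum over $\q \in \widehat Z$ gives $\codim_{\Spec(\widehat R)}(\widehat Z) > k$.

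By hypothesis $\Spec(\widehat R)$ is connected in codimension $k$, so $\Spec(\widehat R) \setminus \widehat Z$ is connected. Now $\pi$ restricts to a continuous surjection $\Spec(\widehat R) \setminus \widehat Z \to \Spec(R) \setminus Z$ --- it is surjective because $\pi$ is surjective and $\widehat Z = \pi^{-1}(Z)$, so $\pi(\Spec(\widehat R) \setminus \widehat Z) = \Spec(R) \setminus Z$. Since a continuous surjective image of a connected space is connected, $\Spec(R) \setminus Z$ is connected, as required. The only step that requires any care is the codimension comparison $\codim_{\Spec(\widehat R)}(\widehat Z) \ge \codim_{\Spec(R)}(Z)$, which is nonetheless routine: it follows from going-down for the faithfully flat homomorphism $\varepsilon$ (equivalently, from the dimension formula $\dim \widehat R_{\q} = \dim R_{\p} + \dim(\widehat R_{\q}/\p\widehat R_{\q})$ for flat local homomorphisms). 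Everything else in the argument is purely formal.
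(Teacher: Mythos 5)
Your proof is correct, but it takes a genuinely different route from the paper's. You argue via characterization (\ref{FirstVersionOfConnectednessInCodim}): pull back a closed set $Z \subseteq \Spec(R)$ of codimension $>k$ to $\widehat{Z} = \pi^{-1}(Z)$, check $\codim_{\Spec(\widehat{R})}(\widehat{Z}) > k$ by comparing heights through going-down for the flat map $\varepsilon\colon R \to \widehat{R}$, and then push connectedness of $\Spec(\widehat{R})\setminus\widehat{Z}$ forward along the continuous surjection onto $\Spec(R)\setminus Z$. The paper instead works with characterization (\ref{SecondVersionOfConnectednessInCodim}): it lifts two minimal primes $\pprime'$, $\pprime''$ of $R$ to \emph{minimal} primes $\qprime'$, $\qprime''$ of $\widehat{R}$ (surjectivity of $\pi$ plus a short minimality argument), takes a chain of minimal primes of $\widehat{R}$ joining them with $\height_{\widehat{R}}(\qprime_i + \qprime_{i+1}) \leq k$, and contracts the chain back to $R$, again invoking going-down to bound $\height_R(\pprime_i + \pprime_{i+1})$. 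Both arguments ultimately rest on the same two ingredients --- surjectivity of $\Spec(\widehat{R}) \to \Spec(R)$ (faithful flatness) and going-down (flatness) --- so they are close in substance. Yours is topologically cleaner, needs no discussion of minimal primes, and visibly applies to any surjective morphism satisfying going-down; the paper's version has the minor advantage of producing the explicit chain of irreducible components in the form (\ref{SecondVersionOfConnectednessInCodim}), which is how the conclusion is consumed in the proof of Theorem~\ref{TheoremYsep=n+k-1=>k-reflections}. If you write yours up, it is worth stating explicitly the identity $\codim_{\Spec(\widehat{R})}(\widehat{Z}) = \min\{\height_{\widehat{R}}(\qprime) \mid \qprime \in \widehat{Z}\}$ (the minimum being attained at the generic points of $\widehat{Z}$), which is what justifies ``taking the infimum over $\qprime$.''
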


\begin{proof}
Since $\widehat{R}$ is a faithfully flat $R$-module, the morphism 
\[ \varphi: \Spec(\widehat{R}) \to \Spec(R),\quad \qprime \mapsto \qprime \cap R,\]
corresponding to the inclusion $\varepsilon: R \to \widehat{R}$, is surjective (see \cite[Theorem 7.3]{matsumura1989commutative}).

Now let $\pprime'$ and $\pprime''$  be minimal prime ideals of $R$. Because of the surjectivity of $\varphi$ there are prime ideals $\qprime'$ and $\qprime''$ of $\widehat{R}$ which are mapped to $\pprime'$ and $\pprime''$, respectively. Since $\widehat{R}$ is Noetherian, $\qprime'$ contains a minimal prime ideal of $\widehat{R}$, which has to be mapped to $\pprime'$, too, because of the minimality of $\pprime'$. Therefore, we can assume that $\qprime'$ and $\qprime''$ are minimal.
Now suppose that $\Spec(\widehat{R})$ is connected in codimension $k$. By (\ref{SecondVersionOfConnectednessInCodim}), this guarantees the existence of a finite sequence of minimal prime ideals
$\qprime_0 = \qprime', \, \qprime_1 \upto \qprime_r = \qprime''$ of  $\widehat{R}$ such that
\[ \height_{\widehat{R}}(\qprime_i + \qprime_{i+1}) = \codim_{\Spec(\widehat{R})}(\Var(\qprime_i) \cap \Var(\qprime_{i+1})) \leq k \quad \text{ (for }i= 0 \upto r-1). \]
With $\pprime_i := \qprime_i \cap R$ (for all $i$) we get:
\[ \height_R(\pprime_i + \pprime_{i+1}) \leq \height_R((\qprime_i + \qprime_{i+1}) \cap R) \leq \height_{\widehat{R}}(\qprime_i + \qprime_{i+1}) \leq k,\]
where the second inequality follows from going-down (see \cite[Theorem 8.8. \& Theorem 9.5]{matsumura1989commutative}).
So there is a finite sequence of irreducible closed subsets $Y_i := \Var(\pprime_i)$ of $\Spec(R)$ which leads from $Y_0 = \Var(\pprime')$ to $Y_r = \Var(\pprime'')$ in a way such that two subsequent subsets intersect in codimension $\leq k$. By (\ref{SecondVersionOfConnectednessInCodim}), $\Spec(R)$ is connected in codimension $k$.
\end{proof}

In the proof of Theorem~\ref{TheoremYsep=n+k-1=>k-reflections} we want to check the connectedness property only at maximal ideals. This works according to the following proposition.

\begin{prop}\label{PropositionLocalConnectednessImpliesGlobalConnectedness}
Let $R$ be a Noetherian Jacobson ring with connected spectrum $Y = \Spec(R)$, and assume that for all maximal ideals $\mmax$ of $R$ the spectrum of the localization $R_{\mmax}$ is connected in codimension $k$.
Then $Y$ is connected in codimension $k$, too.
\end{prop}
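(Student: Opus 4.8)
The plan is to use the combinatorial characterization \eqref{SecondVersionOfConnectednessInCodim} of connectedness in codimension $k$. Since $R$ is Noetherian, $Y = \Spec(R)$ has only finitely many irreducible components, namely the sets $\Var(\p)$ as $\p$ runs over the minimal primes of $R$. Form the graph $\Gamma$ whose vertices are the minimal primes of $R$, with an edge between $\p$ and $\q$ exactly when $\codim_Y(\Var(\p) \cap \Var(\q)) \le k$, equivalently when $\height_R(\p + \q) \le k$. By \eqref{SecondVersionOfConnectednessInCodim}, proving the proposition amounts to showing that $\Gamma$ is connected. First I would record the soft fact that, because $Y$ is connected, the coarser graph with an edge between $\p$ and $\q$ whenever $\Var(\p) \cap \Var(\q) \neq \emptyset$ is already connected --- otherwise a partition of the vertex set admitting no such edge across it would split $Y$ into two disjoint nonempty closed subsets. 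Hence it suffices to prove: if $\p$ and $\q$ are minimal primes of $R$ with $\Var(\p) \cap \Var(\q) \neq \emptyset$, then $\p$ and $\q$ lie in the same connected component of $\Gamma$.

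Fix such $\p$ and $\q$. Then $\p + \q$ is a proper ideal, hence is contained in some maximal ideal $\mmax$ (it is via this maximal ideal that the local hypothesis gets applied; the Jacobson assumption on $R$ certainly supplies such a maximal ideal above any given proper ideal). Both $\p$ and $\q$ are minimal primes of $R$ contained in $\mmax$, so $\p R_\mmax$ and $\q R_\mmax$ are minimal primes of $R_\mmax$, i.e. irreducible components of $\Spec(R_\mmax)$; and every irreducible component of $\Spec(R_\mmax)$ has the form $\Var(\p' R_\mmax)$ for a minimal prime $\p' \subseteq \mmax$ of $R$. By hypothesis $\Spec(R_\mmax)$ is connected in codimension $k$, so by \eqref{SecondVersionOfConnectednessInCodim} there is a chain of minimal primes $\p = \p_0, \p_1, \ldots, \p_s = \q$ of $R$, all contained in $\mmax$, such that
\[ \height_{R_\mmax}\!\bigl((\p_l + \p_{l+1}) R_\mmax\bigr) = \codim_{\Spec(R_\mmax)}\!\bigl(\Var(\p_l R_\mmax) \cap \Var(\p_{l+1} R_\mmax)\bigr) \le k \qquad (0 \le l \le s-1). \]
The key point is then that for any ideal $I \subseteq \mmax$ one has $\height_R(I) \le \height_{R_\mmax}(I R_\mmax)$, since the primes of $R$ that contain $I$ and are contained in $\mmax$ form a subset of all primes of $R$ containing $I$, and localization at $\mmax$ preserves the heights of the former. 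Applying this with $I = \p_l + \p_{l+1}$ yields $\height_R(\p_l + \p_{l+1}) \le k$ for each $l$; that is, consecutive $\p_l$ are joined by edges of $\Gamma$. So $\p$ and $\q$ are joined by a path in $\Gamma$, which completes the argument.

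The step I expect to be the crux is the comparison of heights before and after localizing at $\mmax$: one must be sure that passing from $R$ to $R_\mmax$ can only \emph{raise} the codimension of the intersection of two components, so that a ``local'' chain with small codimensions pulls back to a genuine path in the ``global'' graph $\Gamma$. The remaining ingredients are routine: the correspondence between minimal primes of $R_\mmax$ and minimal primes of $R$ contained in $\mmax$; the identity $\codim_{\Spec(S)}(\Var(J)) = \height_S(J)$; and the observation that the irreducible components of a connected Noetherian space form a connected intersection graph. The Jacobson hypothesis is used only to ensure that every relevant nonempty closed subset of $Y$ (here $\Var(\p) \cap \Var(\q) = \Var(\p+\q)$) contains a point lying in a maximal ideal, so that the hypothesis at maximal ideals can be invoked; one could equally run the argument through characterization \eqref{FirstVersionOfConnectednessInCodim}, writing a hypothetical disconnection $Y \setminus Z = U \sqcup U'$ of the complement of a closed set $Z$ with $\codim_Y(Z) > k$, taking closures $W = \overline U$, $W' = \overline{U'}$, using the Jacobson property to find a closed point of the nonempty closed set $W \cap W' \subseteq Z$, and deriving a contradiction after localizing there.
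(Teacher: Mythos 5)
The paper does not actually print a proof of this proposition (it is ``left to the reader'' with a pointer to the author's dissertation), so there is nothing in the text to compare against line by line; judged on its own, your argument is correct and complete. The reduction is sound: by characterization (\ref{SecondVersionOfConnectednessInCodim}) you must connect the graph $\Gamma$ on minimal primes with edges where $\height_R(\p+\q)\le k$, connectedness of $Y$ connects the coarser graph with edges where $\p+\q$ is proper, and for such a pair you localize at a maximal ideal $\mmax\supseteq\p+\q$, use the bijection between minimal primes of $R_\mmax$ and minimal primes of $R$ inside $\mmax$, and transport the local chain back via the inequality $\height_R(I)\le\height_{R_\mmax}(IR_\mmax)$ for $I\subseteq\mmax$ --- which you justify correctly (the minimum defining the local height runs over the smaller set of primes above $I$ contained in $\mmax$, on which localization preserves height). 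This is exactly the same strategy the paper uses to prove the adjacent Lemma~\ref{LemmaCompletionConnectedInCodimImpliesRingConnectedInCodim} for completions, where the height comparison requires going-down; in the localization setting it is immediate, as you note. Your side remark is also accurate: as written, the argument never uses the Jacobson hypothesis, since any proper ideal of any ring lies in some maximal ideal; so your proof in fact establishes the statement for an arbitrary Noetherian ring. (The Jacobson hypothesis would only become essential in a variant where the local condition is imposed at closed points of an arbitrary closed subset, as in your alternative sketch via (\ref{FirstVersionOfConnectednessInCodim}).)
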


\begin{proof}
The proof is left to the reader. It is also available in the author's dissertation \cite[Proposition 2.7]{reimers2016dissertation}.
\end{proof}

Combining Grothendieck's connectedness theorem with Theorem~\ref{TheoremVsepConnectedInCodimIfAndOnlyIfGgenerated} leads to our main result of this section.

\begin{thm}\label{TheoremYsep=n+k-1=>k-reflections}
Assume that $X$ is normal and connected and that $G$ is generated by elements having a fixed point in $X$. If $\gammasep = n + k - 1$ (with $k \in \N$), then $G$ is generated by $k$-reflections.
\end{thm}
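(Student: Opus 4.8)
The plan is to follow the blueprint of Theorem~\ref{TheoremYsep=nImpliesGeneratedByReflections} and of the complete-intersection case: pass to a completion at a fixed point, use Grothendieck's connectedness theorem there to bound the connectedness-in-codimension of (a defining ideal of) the separating variety, descend that bound back to the global separating variety, and finally invoke Theorem~\ref{TheoremVsepConnectedInCodimIfAndOnlyIfGgenerated}. Concretely, let $S = \{f_1 \upto f_{n+k-1}\}$ be a separating set of invariants realizing $\gammasep$, and let $R := K[X] \otimes_K K[X]$, $J := (\delta(f_1) \upto \delta(f_{n+k-1})) \subseteq R$; by Proposition~\ref{PropositionSeparatingRadicalCondition} we have $\sqrt{J} = \sqrt{\Isep}$, so $\Var_{X \times X}(J) = \Varsep$ as a topological space, and by Remark~\ref{RemarkVsepEqudimensionalToo} (using normality, hence equidimensionality of $X$ and of $X \times X$) we have $\height_R(J) = \codim_{X \times X}(\Varsep) = n$.

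Next I would localize and complete at a suitable maximal ideal. Since $G$ is generated by elements with a fixed point, for each generator $\sigma$ there is a point $x_\sigma \in X^\sigma$, and the diagonal-type point $(x_\sigma, x_\sigma) \in H_{\id} \subseteq \Varsep$ lies on the relevant component; more to the point, it suffices (by Proposition~\ref{PropositionLocalConnectednessImpliesGlobalConnectedness}, as $R$ is a Jacobson ring and $\Varsep$ is connected by Corollary~\ref{CorollaryVsepConnectedIfAndOnlyIf}) to check that $\Spec((R/J)_{\mmax})$ is connected in codimension $k$ for every maximal ideal $\mmax \supseteq J$. Fix such an $\mmax$, corresponding to a point $(x,y) \in \Varsep$, so $y = \tau x$ for some $\tau \in G$. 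Let $\widehat{R_\mmax}$ be the completion; by Lemma~\ref{LemmaCompletionConnectedInCodimImpliesRingConnectedInCodim} it is enough to show $\Spec(\widehat{R_\mmax}/J\widehat{R_\mmax})$ is connected in codimension $k$. Now $\widehat{R_\mmax}$ is the completion of the $2n$-dimensional regular local ring $(R_\mmax')$ obtained from a smooth point... but $X$ is only normal, so $R_\mmax$ need not be regular; instead I would argue directly: $\widehat{R_\mmax}$ is a complete Noetherian local ring of dimension $2n$, it is equidimensional and catenary (Remark~\ref{RemarkAboutConnectedInDimension}), and $\Spec(\widehat{R_\mmax})$ is connected — it is in fact connected in dimension $2n - n = n$, equivalently in codimension $n$, because $X \times X$ is equidimensional of dimension $2n$ and $\widehat{R_\mmax}$ has all minimal primes of the same dimension, so it is connected in codimension $0$-through... here I need a genuine connectedness input: I would use that $X$ normal implies $X \times X$ is normal hence $\widehat{R_\mmax}$ is a normal (in particular integral) domain by Zariski's theorem \cite{zariski1948irreducibilty}, so $\Spec(\widehat{R_\mmax})$ is irreducible, i.e. connected in codimension $0$, hence trivially connected in dimension $2n-1$. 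Then Grothendieck's theorem (Theorem~\ref{TheoremGrothendieckConnectedness}) applied with $d = 2n-1$ and the $r = n+k-1$ elements $\delta(f_1) \upto \delta(f_{n+k-1}) \in \widehat{\mmax}$ (which lie in the maximal ideal since $\mmax \supseteq J$) gives that $\Spec(\widehat{R_\mmax}/J\widehat{R_\mmax})$ is connected in dimension $(2n-1) - (n+k-1) = n - k$; since $R/J$ is equidimensional of dimension $n$ — this needs $\height J = n$ so that every minimal prime of $J$ has coheight $n$, which holds by the height computation above together with equidimensionality of $X \times X$ — this is exactly connectedness in codimension $k$.

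Assembling: by Lemma~\ref{LemmaCompletionConnectedInCodimImpliesRingConnectedInCodim} each $\Spec((R/J)_\mmax)$ is connected in codimension $k$; by Proposition~\ref{PropositionLocalConnectednessImpliesGlobalConnectedness} the whole of $\Spec(R/J) = \Varsep$ is connected in codimension $k$; and then Theorem~\ref{TheoremVsepConnectedInCodimIfAndOnlyIfGgenerated} yields that $X$ is connected in codimension $k$ and $G$ is generated by $k$-reflections, which is the conclusion (and incidentally $k \le n$, consistent with $\gammasep \le 2n+1$). The main obstacle I expect is the bookkeeping around equidimensionality and the passage between "connected in dimension $d$" and "connected in codimension $k$": Grothendieck's theorem naturally outputs a dimension statement about $\Spec(\widehat{R_\mmax}/J\widehat{R_\mmax})$, and to convert it to the codimension statement needed by Theorem~\ref{TheoremVsepConnectedInCodimIfAndOnlyIfGgenerated} I must know that cutting by the $n+k-1$ elements $\delta(f_i)$ drops the dimension by exactly $n$ and preserves equidimensionality locally — i.e. that the $\delta(f_i)$ behave like a system of parameters "up to $k-1$ extra elements." This is where $\height_R(J) = n$ (from Remark~\ref{RemarkVsepEqudimensionalToo} and Proposition~\ref{PropositionSeparatingRadicalCondition}) and the equidimensionality and catenarity of $X \times X$ and its completions do the work; one has to be slightly careful that after completing, $J\widehat{R_\mmax}$ still has height $n$, which follows from faithful flatness of completion.
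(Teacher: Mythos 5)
Your proposal is correct and follows essentially the same route as the paper's proof: complete $R/J$ at each maximal ideal of $\Varsep$, use normality of $X\times X$ and Zariski's theorem to get irreducibility of $\Spec(\widehat{R})$, apply Grothendieck's connectedness theorem with $d=2n-1$ and $r=n+k-1$, convert connectedness in dimension $n-k$ to connectedness in codimension $k$ via $\dim(R/J)=\dim(\Varsep)=n$ and Remark~\ref{RemarkAboutConnectedInDimension}, descend with Lemma~\ref{LemmaCompletionConnectedInCodimImpliesRingConnectedInCodim}, globalize with Proposition~\ref{PropositionLocalConnectednessImpliesGlobalConnectedness}, and conclude with Theorem~\ref{TheoremVsepConnectedInCodimIfAndOnlyIfGgenerated}. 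Your worries about the height of $J$ after completion are not actually needed, since the argument only uses $\dim(\widehat{R}/J\widehat{R})=\height_{R/J}(\mmax/J)=n$, which follows from equidimensionality of $\Varsep$.
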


\begin{proof}
Let us write 
\[r := \gammasep = n + k - 1,\]
so there exists a separating subset $\{f_1 \upto f_r \} \subseteq K[X]^G$ of size $r$. Using the $\delta$-map from (\ref{EquationDefinitionOfDeltaMap}) we define
 \[g_i := \delta(f_i) \in K[X] \otimes_K K[X] \quad (\text{for } i = 1 \upto n).\] 
Moreover, let $J$ be the ideal in $R := K[X] \otimes_K K[X]$ generated by $g_1 \upto g_r$. Then
by Proposition~\ref{PropositionSeparatingRadicalCondition}, $\Spec(R/J)$ is homeomorphic to $\Spec(K[\Varsep])$ where $K[\Varsep]$ is the coordinate ring of the separating variety $\Varsep$. So according to Theorem~\ref{TheoremVsepConnectedInCodimIfAndOnlyIfGgenerated}, we need to show that $\Spec(R/J)$ is connected in codimension $k$, while we already know from Corollary~\ref{CorollaryVsepConnectedIfAndOnlyIf} that this space is connected. 

Take a point $(x,\,y) \in \Varsep$ and its corresponding maximal ideal $\mmax$ of $R$, and let $\widehat{R}$ be the $\mmax$-adic completion of $R$. By assumption, $X$ is normal, hence the product variety $X \times X$ is normal, too.
Normality of $X \times X$ implies that the spectrum of $\widehat{R}$ is irreducible (see \cite{zariski1948irreducibilty}). In particular, $\Spec(\widehat{R})$ is connected in dimension $d$ for all $d \leq 2n = \dim(\widehat{R})$. So we can apply the local version of Grothendieck's connectedness theorem, given in Theorem~\ref{TheoremGrothendieckConnectedness}, with $d = 2n-1$ to the complete Noetherian local ring $\widehat{R}$. It shows that the spectrum of $$\widehat{R} / J \widehat{R} = \widehat{R} / (g_1 \upto g_r)_{\widehat{R}}$$ is connected in dimension 
\[ (2n - 1) - r = 2n - 1 - (n+k-1) = n - k.\]
But by \cite[Theorem 8.11]{matsumura1989commutative}, $\widehat{R} / J \widehat{R}$ is the $\mmax$-adic completion of $R /J$. So we have 
\[ \dim(\widehat{R} / J \widehat{R}) = \height_{R/J}(\mmax/J) = \dim(R/J) = \dim(\Varsep) = n\]
(cf. Remark~\ref{RemarkVsepEqudimensionalToo}(a)). Therefore, the spectrum of $\widehat{R} / J \widehat{R}$ is connected in codimension $k$ by Remark~\ref{RemarkAboutConnectedInDimension}. 

Since $\widehat{R} / J \widehat{R}$ is also the completion of the local ring $(R/J)_{\mmax/J}$, Lemma~\ref{LemmaCompletionConnectedInCodimImpliesRingConnectedInCodim} shows that $\Spec((R/J)_{\mmax/J})$ is connected in codimension $k$.

As this holds for any maximal ideal $\mmax$ of $K[X] \otimes_K K[X]$ which corresponds to a point of $\Varsep$, i.e., any maximal ideal $\mmax / J$ of $R/J$, we conclude with Proposition~\ref{PropositionLocalConnectednessImpliesGlobalConnectedness} that 
$\Spec(K[\Varsep]) \cong \Spec(R/J)$ is connected in codimension $k$.
\end{proof}

Notice that the assumptions on $G$ and $X$ in Theorem~\ref{TheoremYsep=n+k-1=>k-reflections} are satisfied both for a linear action on a vector space $X=V$ (where every group element fixes the origin) and for the case of multiplicative invariants (where $X = \G_m^n$ and every group element fixes the point $(1 \upto 1)$). So Theorem~\ref{TheoremYsep=n+k-1=>k-reflections} is now also applicable for multiplicative invariant theory.

Furthermore, it is interesting to compare the case $k = 1$ of the above theorem with Theorem~\ref{TheoremYsep=nImpliesGeneratedByReflections}. In Theorem~\ref{TheoremYsep=n+k-1=>k-reflections} it was necessary to assume that $X$ is normal, while in Theorem~\ref{TheoremYsep=nImpliesGeneratedByReflections} it was necessary to assume that $X$ is Cohen-Macaulay. We can reuse Example~\ref{ExampleTwoAffinePlanesIntersectingInAPoint}, where $X$ was the union of two planes intersecting in a single point, to see that the assumption that $X$ is normal cannot be dropped from Theorem~\ref{TheoremYsep=n+k-1=>k-reflections}.

We finish with an example of a $G$-variety $X$ that is not an affine space and to which our main theorems apply.

\begin{example} Let $X$ be the following $3$-dimensional irreducible subvariety of $\C^4$:
\begin{equation}\label{DefinitionOfXinLastExample}X = \Var(\underbrace{x_1^3+x_2x_3 + x_3x_4 + x_4^2}_{=:h}).\end{equation}
Since $h(x_1,\,-x_2,\,-x_3,\,-x_4) = h(x_1,\,x_2,\,x_3,\,x_4)$, a cyclic group $G = \langle \sigma \rangle$ of order 2 acts on $X$ by
\begin{equation}\label{GroupActionInLastExample} \sigma \cdot (x_1,\,x_2,\,x_3,\,x_4) := (x_1,\,-x_2,\,-x_3,\,-x_4).\end{equation} 
The invariant ring of this $G$-variety is generated by 6 elements:
\begin{align}\label{InvariantRingInLastExample} \C[X]^G &= \C[x_1,\,\,x_2^2,\,\,x_3^2,\,\,x_4^2,\,\,x_2x_3,\,\,x_2x_4,\,\,x_3x_4] / (h)  \\
&= \C[\overline{x_1},\,\,\overline{x_2}^2,\,\,\overline{x_3}^2,\,\,\overline{x_2}\overline{x_3},\,\,\overline{x_2}\overline{x_4},\,\, \overline{x_3}\overline{x_4}].\notag\end{align}
The origin of $\C^4$ lies in $X$ and is fixed by this action, hence $X^{G} \neq \emptyset$. But there are no other fixed points in $X$: A point $(x_1,\,x_2,\,x_3,\,x_4) \in \C^4$ is fixed by the action defined in (\ref{GroupActionInLastExample}) if and only if $x_2=x_3=x_4 = 0$. But for $x \in X$ we see with (\ref{DefinitionOfXinLastExample}) that the vanishing of these three coordinates implies $x_1 = 0$, hence $X^{G} = \{ 0 \}$. In particular, $G$ is not generated by 2-reflections.

Since $X$ is a hypersurface, it is also Cohen-Macaulay. Thus, Theorem~\ref{TheoremYsep=nImpliesGeneratedByReflections} gives the lower bound $\gammasep \geq n + 1 = 4$.

Of course, we can do better now with Theorem~\ref{TheoremYsep=n+k-1=>k-reflections}. We need to check that $X$ is normal first. A quick calculation with the Jacobian criterion shows that the singular locus of $X$ is just $X^{\text{sing}} = \{ 0 \}$.
In particular, $\C[X]$ satisfies Serre's condition $(R_1)$. As $\C[X]$ is Cohen-Macaulay, it satisfies a fortiori the condition $(S_2)$. By Serre's criterion for normality (see \cite[Theorem 2.2.22]{bruns1993cohen}), we see that $\C[X]$ is normal. So Theorem~\ref{TheoremYsep=n+k-1=>k-reflections} implies that $\gammasep \geq 5$. In addition, Theorem~\ref{TheoremMainTheoremOnCompleteIntersectionAndBireflection} shows that no  separating subalgebra $A \subseteq \C[X]^G$ over which $\C[X]^G$ is integral is a complete intersection.

A generating set of size 6 was given in (\ref{InvariantRingInLastExample}). We claim that the following subset of size 5 is separating for the action on $X$:
\[ S = \{ \overline{x_1},\,\,\overline{x_2}^2,\,\,\overline{x_3}^2,\,\,\overline{x_2}\overline{x_4},\,\, \overline{x_3}\overline{x_4} \}.\] For the proof suppose that $u,\, v \in X$ satisfy $g(u) = g(v)$ for all $g \in S$. The invariant $f = \overline{x_2 x_3}$ is the only generator missing in $S$, so we just need to show that $f(u) = f(v)$. Write $u = (u_1,\,u_2,\,u_3,\,u_4)$. Then we have
\[ v = \begin{pmatrix}v_1\\v_2\\v_3\\v_4\end{pmatrix}\in \{ \begin{pmatrix}u_1\\\pm u_2 \\\pm u_3 \\ \lambda \end{pmatrix} \mid \lambda \in \C\}.\]
If $u_2 = 0$, then we get $f(u) = 0  = f(v)$ from this. So assume that $u_2 \neq 0$. With the invariant $\overline{x_2}\overline{x_4} \in S$ we get
 \[ u_2 u_4 = v_2 v_4  = \pm u_2 v_4, \]
hence $v_4 = \pm u_4$. The definition of $X$ in (\ref{DefinitionOfXinLastExample}) shows that $$f = \overline{x_2 x_3} = -\overline{x_1}^3 - \overline{x_3}\overline{x_4} - \overline{x_4}^2,$$
and for each of these three summands we now know that it takes the same value at $u$ and $v$, so $f(u) = f(v)$.

With Theorem~\ref{TheoremYsep=n+k-1=>k-reflections} we could really pin down $\gammasep$ to the value of $5$ in this example.\hfill$\triangleleft$
\end{example}

%-------------------------------
%Literaturverzeichnis
%-------------------------------

\bibliographystyle{myplain}
\bibliography{literature}

\end{document}